\newcounter{remark}
\newenvironment{remark}{\addtocounter{remark}{1}\par {\it Remark} {\rm \arabic{remark}.}}{\par}
\newcounter{example}
\newenvironment{example}{\addtocounter{example}{1}\par {\it Example} {\rm \arabic{example}.}}{\par}
\newcommand{\Cinf}{\ensuremath{\mathcal{C}^\infty}}
\newcommand{\D}{\ensuremath{{\mathcal D}}}
\newcommand{\mb}[1]{\ensuremath{\mathbb{#1}}}
\newcommand{\N}{\mb{N}}
\newcommand{\R}{\mb{R}}
\newcommand{\Z}{\mb{Z}}
\newcommand{\G}{\ensuremath{{\mathcal G}}}
\newcommand{\Gc}{\ensuremath{{\cal G}_\mathrm{c}}}
\newcommand{\EM}{\ensuremath{{\mathcal E}_{M}}}
\newcommand{\EMinf}{\ensuremath{{\mathcal E}^\infty_{M}}}
\newcommand{\Neg}{\mathcal{N}}
\newcommand{\Ginf}{\ensuremath{\G^\infty}}
\newcommand{\singsupp}{\operatorname{sing\,supp\hspace{0.5pt}}}
\renewcommand{\supp}{\mathop{\mathrm{supp}}}
\newfont{\bigmath}{cmr12 at 13pt}
\newfont{\grecomath}{cmmi12 at 15pt}
\newcommand{\p}{\ensuremath{\partial}}
\renewcommand{\div}{\operatorname{{div}}}
\newcommand{\eps}{\varepsilon}
\newcommand{\ee}{{\rm e}\hspace{1pt}}
\newcommand{\cC}{{\mathcal C}}
\newcommand{\cD}{{\mathcal D}}
\newcommand{\cE}{{\mathcal E}}
\newcommand{\cG}{{\mathcal G}}
\newcommand{\cN}{{\mathcal N}}
\newcommand{\cO}{{\mathcal O}}
\newcommand{\cS}{{\mathcal S}}
\newcommand{\EMtwo}{\ensuremath{{\cE}_{\mathrm{M},2}}}
\newcommand{\Gtwo}{\ensuremath{\cG_{2,2}}}
\def\Gc{\mathcal{G}_c}
\newcommand{\hpl}{\ensuremath{[0,\infty)\times\R}}
\newcommand{\stripn}{\ensuremath{[0,T]\times\R^d}}
\newcommand{\strip}{\ensuremath{[0,T]\times\R}}
\newcommand{\Arsinh}{\operatorname{Arsinh}}
\newcommand{\sign}{\operatorname{sign}}
\title{Propagation of singularities for generalized solutions to wave equations with discontinuous coefficients}
\author{Hideo Deguchi\thanks{Department of Mathematics, University of Toyama,
   Gofuku 3190, 930-8555 Toyama, Japan (hdegu@sci.u-toyama.ac.jp)}\and
   Michael Oberguggenberger\thanks{Unit of Engineering Mathematics, University of Innsbruck, Technikerstrasse 13, A-6020 Innsbruck, Austria (michael.oberguggenberger@uibk.ac.at)}}
\begin{document}
\maketitle

\begin{abstract}
This article addresses linear hyperbolic partial differential equations with non-smooth coefficients and distributional data.
Solutions are studied in the framework of Colombeau algebras of generalized functions.
Its aim is to prove upper and lower bounds for the singular support of generalized solutions for wave equations with discontinuous coefficients. New existence results with weaker assumptions on the representing families are required and proven. The program is carried through for various types of one- and multidimensional wave equations and hyperbolic systems.
\end{abstract}

\begin{keywords}hyperbolic equations; non-smooth coefficients; Colombeau generalized functions; propagation of singularities\end{keywords}

\begin{AMS}Primary, 35A21, 46F30, 35B65; Secondary, 35L05, 35L45\end{AMS}

\pagestyle{myheadings}
\thispagestyle{plain}
\markboth{HIDEO DEGUCHI, MICHAEL OBERGUGGENBERGER}{WAVE EQUATIONS WITH DISCONTINUOUS COEFFICIENTS}

%%%%%%%%%%%%%%%%%%%%%%%%%%%%%%%%%%% Text %%%%%%%%%%%%%%%%%%%%%%%%%%%%%%%%%%%%%%%%%%%%%%%%%%%%%

\section{Introduction}\label{sec : 1}

This paper is an advanced part of a program that aims at studying propagation of singularities and regularity of solutions to linear hyperbolic equations and systems with non-smooth coefficients in algebras of generalized functions. To explain the setting, consider a wave equation of the form
\begin{equation}\label{eq:conswaveIntro}
\begin{array}{l}
   \p_t^2 u(t,x) - \div\big(c(t,x)\nabla u(t,x)\big) = 0, \quad t \geq 0,\ x\in \R^d, \vspace{4pt} \\
   u(0,x) = u_0(x), \quad \p_t u(0,x) = u_1(x), \quad x\in \R^d.
\end{array}
\end{equation}
We are interested in the case of non-smooth coefficients $c(t,x)$ and initial data distributions. The reasons to get into algebras of generalized functions are twofold. First, it is well-known that the existence of classical solutions requires a minimal degree of smoothness of the coefficients, as does the application of the classical machinery of pseudodifferential operators and Fourier integral operators \cite{Colombini:79,Colombini:95, HallerH:2008,Taylor:91}. In intended applications in seismology and continuum mechanics, the coefficients might have jumps at irregular interfaces or might be continuous, but non-differentiable paths of a stochastic process, thereby lacking the kind of smoothness required in the classical theory \cite{HMO:2008,MOFIOSSA:2014}. Second, as the solution is expected to be as singular as the initial data, equation (\ref{eq:conswaveIntro}) involves a nonlinear interaction of coefficient singularities with singularities in the solution, hence a product of distributions.

There is a long history of results (e.g. \cite{GO:2011b,Haneletal:2013,HS:2012,LO:1991,O:1989,O:1992,O:2008}) that equations of this type can be uniquely solved in the Colombeau algebra $\cG$ of generalized functions. This is an algebra containing the space of distributions as a subspace. Non-smooth coefficients and distributional initial data can be imbedded in this algebra, and hence generalized solutions are available in this setting. Members of the Colombeau algebras are equivalence classes of families $(u_{\varepsilon})_{\varepsilon \in (0,1]}$ of smooth functions that are characterized by certain asymptotic properties as $\eps\downarrow 0$. Representatives of the Colombeau solution satisfy
\begin{equation}\label{eq:conswaveReg}
\begin{array}{l}
   \p_t^2 u_\eps(t,x) - \div\big(c_\eps(t,x)\nabla u_\eps(t,x)\big) = h_\eps(t,x), \quad t \geq 0,\ x\in \R^d, \vspace{4pt} \\
   u_\eps(0,x) = u_{0\eps}(x)+ h_{0\eps}(x), \quad \p_t u_\eps(0,x) = u_{1\eps}(x)+h_{1\eps}(x), \quad x\in \R^d
\end{array}
\end{equation}
where $h_\eps, h_{0\eps}, h_{1\eps}$ belong to a certain ideal. The question of regularity and propagation of singularities can be cast in the following form:
\begin{itemize}
\item[(Q)] Is it possible to detect the singular support of the generalized solution from the asymptotic properties of its representing families $(u_{\varepsilon})_{\varepsilon \in (0,1]}$ as $\eps\downarrow 0$?
\end{itemize}
In this respect, a subalgebra $\Ginf$ of $\cG$ has been introduced in \cite{O:1992} which satisfies $\Ginf\cap\cD' = \Cinf$. It plays the same role in the $\cG$-setting as $\Cinf$ does in the $\cD'$-setting. In fact, the essential steps of regularity theory and propagation of singularities for Colombeau solution have been taken in the past decades, when the coefficients are regular ($\Cinf$ or $\Ginf$ with some additional properties). This starts with elliptic and $\Ginf$-hypoelliptic regularity, regularity of solutions with $\Ginf$-regular initial data, propagation of the $\Ginf$-singular support in the case of constant and of $\Cinf$-coefficients. Further, a calculus of pseudodifferential operators and Fourier integral operators has been developed in the Colombeau setting that admits to deduce a number of results on the $\Ginf$-wave front set of the Colombeau solution when the coefficients are $\Ginf$-regular \cite{G:2005,G:2006,G:ISAAC07,GGO:2003,GH:2005,GH:2006,GHO:2009,GO:2011a,Haller:2009,H:2004a,H:2004b,HO:2004,NPS:1998,O:1992,Scarp:2004}.

The central issue of the paper concerns non-smooth coefficients. When imbedded in the Colombeau algebra, they should \emph{not} belong to the subalgebra $\Ginf$ and it should be possible to detect the singularities that they produce in the generalized solution. The difficulty arises that in most of the previous existence results in the Colombeau setting, coefficients are required to satisfy certain additional logarithmic or slow scale estimates as $\eps\downarrow 0$, which makes them $\Ginf$-regular. One way out is to change the scale on which regularity is described, as was done e.g. in \cite{HH:2001}. Another way out is to consider only time-dependent coefficients. A number of results on propagation of singularities for scalar transport equations (including refined properties of the $\Ginf$-wave front set at characteristic curves with kinks) and on one-dimensional wave equations with time dependent coefficients have been obtained in the literature \cite{D:2011,DHO:2013,O:2008}. What has been lacking so far is the important case of $x$-dependent coefficients.

At interfaces of discontinuity, incoming singularities produce a reflected and a transmitted wave. In view of the intended applications in seismology and continuum mechanics, it is crucial to be able to detect the $\Ginf$-singularities carried by these waves. The present paper closes this gap in the literature by determining precise upper and lower bounds on the $\Ginf$-singular support of the solution in a number of paradigmatic cases.

The paper has essentially two parts. In the first part, existence results on wave equations of the type (\ref{eq:conswaveIntro}) in any space dimension are given that do not require a logarithmic regularization scale for the coefficients. Similar results are proven for the one-dimensional, non-conservative case, one-dimensional hyperbolic systems, together with a new existence result (in the algebra $\cG$) for multidimensional wave equations with time dependent coefficients. In the second part, upper and lower bounds on the $\Ginf$-singular support of the generalized solution to the one-dimensional wave equation with an initial point singularity and an $x$-dependent coefficient with a jump singularity are given. The same is done for rotationally symmetric solutions to multidimensional wave equations with $t$-dependent coefficients. The paper is complemented by comparing the generalized solution with the distributional solution of the corresponding transmission problem, when it exists. The paper shows that in paradigmatic cases the Colombeau solution carries the required information on the propagation of singularities and indeed answers question (Q) affirmatively in these cases. This indicates that the Colombeau setting may be appropriate for studying wave propagation phenomena in even more singular media, where no classical counterpart exists.

The plan of the paper is as follows. In Section 2, required concepts of the theory of Colombeau generalized functions are recalled. Section 3 is devoted to new existence and uniqueness results for wave equations with non-smooth coefficients in Colombeau algebras. The main goal is to show that in many situations bounds on the coefficients from above and away from zero suffice -- no logarithmic asymptotic estimates are needed. In Subsection 3.1, this is done for the conservative wave equations with $x$-dependent propagation speed in any space dimension, in Subsection 3.2 for the non-conservative wave equation in one space dimension, in Subsection 3.3 for the wave equation with time-dependent propagation speed in any dimension. Subsection 3.4 on scalar transport equations paves the way for the results on hyperbolic systems in one space dimension and $x$-dependent coefficients in Subsection 3.5. Subsection 3.6 is a side remark on singularities produced by a discontinuity of the coefficient at time $t=0$, if compatibility conditions are lacking, and that this classical phenomenon can also be detected in the Colombeau setting.

Section 4 aims at proving upper \emph{and} lower bounds on the singularities of the Colombeau solution to wave equations, given initial data with a point singularity. Subsection 4.1 treats the one-dimensional case with a coefficient having a jump at $x=0$ and  with a delta function at $x=-1$ as initial data. We prove that the reflected and the transmitted singularity can be detected in terms of the $\Ginf$-singular support of the Colombeau solution, provided the height of the jump satisfies certain bounds. It is also shown that there is no reflected singularity if the coefficient is $\Ginf$-regular. Subsection 4.2 addresses the one-dimensional wave equation with a $t$-dependent coefficient having a jump at $t=1$. As a preliminary step, a regularity result is proven that states that if the initial data are $\Ginf$-regular, then so is the solution off the line $t=1$. Then we prove an upper bound on the singular support of the solutions for arbitrary Colombeau initial data which are $\Ginf$-regular off the origin, generalizing our results from \cite{DHO:2013}. The upper bound consists of the rays emanating from the origin at $t=0$ and the refracted and transmitted rays starting at the singularity at $t=1$. (In \cite{DHO:2013} it was shown that the bound is attained for delta function initial data.) Next, we give sufficient conditions on the initial data such that singularities propagate out from the origin and remain present along the transmitted rays. Again, this is complemented by precise bounds on the singular support of the solution if the coefficient is $\Ginf$-regular. Finally, Subsection 4.3 settles the rotationally symmetric case with $t$-dependent coefficient and delta function initial data in any space dimension. The Appendix complements the paper by addressing the question of existence of associated distributions (distributional limits of the family of regularizations defining the Colombeau solution). Extending our results from \cite{GO:2011b}, we show that in one space dimension, an $x$-dependent coefficient with a single jump at the origin and delta function initial data, the Colombeau solution admits the distributional solution of the corresponding classical transmission problem as associated distribution. On the side, it is seen that the $\Cinf$-singular support of the distributional solution coincides with the $\Ginf$-singular support of the Colombeau solution. As a final note, we remark that most of our existence results are based on $L^2$-versions of Colombeau algebras, due to the energy methods used in the proofs. (Exceptions are hyperbolic systems in one space dimension and the one-dimensional wave equation with $t$-dependent coefficient, in which cases the usual $L^\infty_{\rm loc}$-based algebras suffice.)

\section{Basic concepts from Colombeau theory}\label{sec : 2}

We will employ the {\it special Colombeau algebra} of generalized functions denoted by $\G^{s}$ in \cite{GKOS:2001} (called the {\it simplified Colombeau algebra} in \cite{B:1990}). However, here we will simply use the letter $\G$ instead. In the context of energy estimates, we will also need $L^p-L^q$-based Colombeau algebras, introduced in \cite{BO:1992}.
This section serves to recall the definition and properties required for our purpose. For more details, see e. g. \cite{BO:1992, GKOS:2001, O:1992}.

Given a non-empty open subset $\Omega$ of $\mathbb{R}^n$, the space of real valued, infinitely differentiable functions on $\Omega$ is denoted by $\Cinf(\Omega)$, while $\Cinf(\overline{\Omega})$ refers to the subspace of functions all whose derivatives have a continuous extension up to the closure of $\Omega$.

Let $\Cinf(\Omega)^{(0,1]}$ be the differential algebra of all maps from the interval $(0,1]$ into $\Cinf(\Omega)$. Thus each element of $\Cinf(\Omega)^{(0,1]}$ is a family $(u_{\varepsilon})_{\varepsilon \in (0,1]}$ of real valued smooth functions on $\Omega$. The subalgebra $\EM(\Omega)$ is defined by the elements $(u_{\varepsilon})_{\varepsilon \in (0,1]}$ of $\Cinf(\Omega)^{(0,1]}$ with the property that, for all $K \Subset \Omega$ and $\alpha \in \mathbb{N}_0^n$, there exists $N \ge 0$ such that
\[
	\sup_{x \in K} |\partial^{\alpha} u_{\varepsilon}(x)| = O(\varepsilon^{-N}) \quad {\rm as}\ \varepsilon \downarrow 0.
\]
The ideal $\Neg(\Omega)$ is defined by all elements $(u_{\varepsilon})_{\varepsilon \in (0,1]}$ of $\Cinf(\Omega)^{(0,1]}$ with the property that, for all $K \Subset \Omega$, $\alpha \in\mathbb{N}_0^n$ and $N \ge 0$,
\[
	\sup_{x \in K} |\partial^{\alpha} u_{\varepsilon}(x)| = O(\varepsilon^N) \quad {\rm as}\ \varepsilon \downarrow 0.
\]
{\it The algebra} $\G(\Omega)$ {\it of generalized functions} is defined by the factor space
\[
	\G(\Omega) = \EM(\Omega) / \Neg(\Omega).
\]
The Colombeau algebra $\G(\overline{\Omega})$ on the closure of $\Omega$ is constructed in a similar way: the compact subsets $K$ occurring in the definition are now compact subsets of $\overline{\Omega}$, i.e., may reach up to the boundary. Since $\EM(\overline{\Omega}) \subset \EM(\Omega)$ and $\Neg(\overline{\Omega}) \subset \Neg(\Omega)$, there is a canonical map $\G(\overline{\Omega}) \to \G(\Omega)$. However, this map is not injective, as follows from the fact that $\Neg(\Omega)\cap \EM(\overline{\Omega}) \ne \Neg(\overline{\Omega})$.

Turning to $L^p-L^q$-based Colombeau algebras, we assume for convenience that the open set $\Omega$ has the strong local Lipschitz property (cf.\;\cite{A:1975}), in order to have all required Sobolev imbedding and extension theorems at our disposal.
Let $1 \leq p,q \leq \infty$ and $m \in \Z$. Concerning Sobolev spaces, we employ the usual notations: $W^{m,p}(\Omega)$, $H^{m}(\Omega)=W^{m,2}(\Omega)$, and $W^{\infty,p}(\Omega)=\cap_{m}W^{m,p}(\Omega)$, $W^{-\infty,p}(\Omega)=\cup_{m}W^{-m,p}(\Omega)$.
Due to the Sobolev imbedding theorems,
\[
   W^{\infty,p}(\Omega) \subset W^{\infty,\infty}(\Omega)
\]
for all $p$, and consequently
\[
   W^{\infty,p}(\Omega) \subset W^{\infty,q}(\Omega)
\]
for $p\leq q$; further, these spaces are differential algebras.

Let $\cE_{p}(\Omega)$ be the differential algebra of all maps from the interval $(0,1]$ into $W^{\infty,p}(\Omega)$. The subalgebra $\cE_{M,p}(\Omega)$ comprises the elements $(u_{\varepsilon})_{\varepsilon \in (0,1]}$ of $\cE_{p}(\Omega)$ with the property that, for all $\alpha \in \mathbb{N}_0^n$, there exists $N \ge 0$ such that
\[
	\|\p^{\alpha}u_\varepsilon\|_{L^p(\Omega)} = O(\varepsilon^{-N}) \quad {\rm as}\ \varepsilon \downarrow 0.
\]
Further, denote by  $\cN_{p,q}(\Omega)$ the set of elements $(u_{\varepsilon})_{\varepsilon \in (0,1]}$ of
$\cE_{M,p}(\Omega) \cap \cE_{q}(\Omega)$ with the property that for all $\alpha \in \mathbb{N}_0^n$ and $N \ge 0$,
\[
	\|\p^{\alpha}u_\varepsilon\|_{L^q(\Omega)} = O(\varepsilon^{N}) \quad {\rm as}\ \varepsilon \downarrow 0.
\]
It can be shown \cite{BO:1992} that $\cN_{p,q}(\Omega)$ is a differential ideal in $\cE_{M,p}(\Omega)$, thus
\[
\cG_{p,q}(\Omega) = \cE_{M,p}(\Omega)/\cN_{p,q}(\Omega)
\]
is a differential algebra. It has the following properties:
(a) For $1 \leq p_{1} \leq p_{2} \leq \infty$ and $ 1 \leq q \leq \infty$,
\[
\cG_{p_{1},q}(\Omega) \subset \cG_{p_{2},q}(\Omega).
\]
(b) There exists a canonical mapping from $\G_{p,q}(\Omega)$ into $\G(\Omega)$, which, however, is not injective.

Since $\Omega$ is assumed to have the strong local Lipschitz property, $W^{\infty,p}(\Omega) \subset \Cinf(\overline{\Omega})$. It follows that $\cE_{p}(\Omega) = \cE_{p}(\overline{\Omega})$, thus replacing $\Omega$ by $\overline{\Omega}$ makes no difference in the $L^p-L^q$-based Colombeau algebras:
\[
    \cG_{p,q}(\Omega) = \cG_{p,q}(\overline{\Omega}).
\]
This holds, in particular, when $\Omega$ is a strip $(0,T) \times \mathbb{R}^d$. If $\Omega$ is bounded, we have in addition
\[
    \cG_{p,q}(\Omega) = \cG_{p,q}(\overline{\Omega}) = \cG(\overline{\Omega}),
\]
because then all $L^p$-norms are equivalent with the $L^\infty$-norm, due to the Sobolev imbedding theorem and the boundedness of $\Omega$.

Summarizing, one should keep in mind that $\cG(\Omega)$ is defined by local estimates, $\cG(\overline{\Omega})$ by semi-global estimates (up to the boundary, but local on unbounded parts of $\Omega$), while $\cG_{p,q}(\Omega)$ is defined by global estimates.

We use capital letters for elements of $\G$ to distinguish generalized functions from distributions and denote by $(u_{\varepsilon})_{\varepsilon \in (0,1]}$ a representative of $U \in \G$. Recall that for any $U$, $V \in \G$ and $\alpha \in \mathbb{N}_0^n$, the partial derivative $\partial^{\alpha} U$ is defined as the class of $(\partial^{\alpha}u_{\varepsilon})_{\varepsilon \in (0,1]}$ and the product $UV$ is defined as the class of $(u_{\varepsilon}v_{\varepsilon})_{\varepsilon \in (0,1]}$. Also, for any $U =$\ class of $(u_{\varepsilon}(t,x))_{\varepsilon \in (0,1]} \in \G([0,\infty)\times\mathbb{R^d})$, we can define its restriction $U|_{t = 0} \in \G(\mathbb{R^d})$ to the hyperplane line $\{t = 0\}$ to be the class of $(u_{\varepsilon}(0,x))_{\varepsilon \in (0,1]}$.

Restrictions to open subsets are defined similarly. The support $\supp U$ of a generalized function $U\in\cG(\Omega)$ is the complement of the largest open subset of $\Omega$ on which $U$ vanishes.
\begin{remark}
For $U\in\cG_{p,q}(\Omega)$ and $V\in\cG_{\infty,q}(\Omega)$, the product $UV$ is a well-defined element of $\cG_{p,q}(\Omega)$.
\end{remark}

\begin{remark}
The algebra $\G(\Omega)$ contains the space $\mathcal{E}^{\prime}(\Omega)$ of compactly supported distributions. In fact, the map
\[
	f \mapsto \iota(f) = {\rm class\ of}\ (f \ast \rho_{\varepsilon}\mid_{\Omega})_{\varepsilon \in (0,1]}
\]
defines an imbedding of $\mathcal{E}^{\prime}(\Omega)$ into $\G(\Omega)$, where
\[
	\rho_{\varepsilon}(x) = \dfrac{1}{\varepsilon^n} \rho \left(\dfrac{x}{\varepsilon}\right)
\]
and $\rho$ is a fixed element of $\mathcal{S}(\mathbb{R}^n)$ such that $\int \rho(x)\,dx = 1$ and $\int x^{\alpha}\rho(x)\,dx = 0$ for any $\alpha \in \mathbb{N}_0^n$, $|\alpha| \ge 1$. This can be extended to an imbedding $\iota$ of the space $\D^{\prime}(\Omega)$ of distributions (the extension is unique as a sheaf morphism). Moreover, this imbedding turns $\Cinf(\Omega)$ into a subalgebra of $\G(\Omega)$. By an anologous imbedding, $W^{-\infty,p}(\mathbb{R}^n)$ is contained in $\G_{p,q}(\mathbb{R}^n)$, and $W^{\infty,q}(\mathbb{R}^n)$ is a subalgebra of $\G_{p,q}(\mathbb{R}^n)$ if $p\leq q$.
\end{remark}

\begin{definition}
A generalized function $U \in \G(\Omega)$ is said to be {\it associated with a distribution} $w \in \D^{\prime}(\Omega)$ if it has a representative $(u_{\varepsilon})_{\varepsilon \in (0,1]} \in \EM(\Omega)$ such that
\[
	u_{\varepsilon} \to w \quad {\rm in}\ \D^{\prime}(\Omega) \quad {\rm as} \ \varepsilon \downarrow 0.
\]
We write $U \approx w$ and call $w$ the {\it associated distribution} of $U$ provided $U$ is associated with $w$.
\end{definition}

Regularity theory for linear equations has been based on the subalgebra $\Ginf(\Omega)$ of {\it regular generalized functions} in $\G(\Omega)$ introduced in \cite{O:1992}. It is defined by all elements which have a representative $(u_{\varepsilon})_{\varepsilon \in (0,1]}$ with the property that, for all $K \Subset \Omega$, there exists $N \ge 0$ such that, for all $\alpha \in \mathbb{N}_0^n$,
\begin{equation}\label{eq:Ginf}
	\sup_{x \in K} |\partial^{\alpha} u_{\varepsilon}(x)| = O(\varepsilon^{-N}) \quad {\rm as}\ \varepsilon \downarrow 0.
\end{equation}
We observe that all derivatives of $u_{\varepsilon}$ have locally the same order of growth in $\varepsilon > 0$, unlike elements of $\EM(\Omega)$. This subalgebra $\Ginf(\Omega)$ has the property $\Ginf(\Omega) \cap \iota\big(\D^{\prime}(\Omega)\big) = \Cinf(\Omega)$, see \cite[Theorem 25.2]{O:1992}. Hence, for the purpose of describing the regularity of generalized functions, $\Ginf(\Omega)$ plays the same role for $\G(\Omega)$ as $\Cinf(\Omega)$ does in the setting of distributions. An element $U\in\cG(\Omega)$ is said to be $\Ginf$-regular on an open subset $\omega$ of $\Omega$, if its restriction $U\vert_\omega$ belongs to $\Ginf(\omega)$. An element $U\in\cG_{p,q}(\Omega)$ is said to be $\Ginf$-regular on $\omega$, if the canonical image of $U$ in $\cG(\omega)$ belongs to $\Ginf(\omega)$, or -- equivalently -- if $U$ has a representative $(u_{\varepsilon})_{\varepsilon \in (0,1]}$ in $\cE_{M,p}(\Omega)$ which satisfies the estimate (\ref{eq:Ginf}) on every compact subset $K$ of $\omega$. In either case, the $\Ginf$-singular support (denoted by $\singsupp_{\Ginf}$) of a generalized function is defined as the complement of the largest open set on which the generalized function is regular in the above sense.

We shall also make use of the global versions of regular generalized functions by $\cG_{p,q}^\infty(\Omega)$ defined by having a representative for which there exists $N\geq 0$ such that for all $\alpha \in \N_0^n$,
\[
     \|\p^\alpha u_\eps\|_{L^p(\Omega)} = O(\eps^{-N})\ \rm{as}\ \eps \downarrow 0.
\]
It is clear that every element of $\cG_{p,q}^\infty(\Omega)$ is $\Ginf$-regular in the previous sense, but a $\Ginf$-regular element of $\cG_{p,q}(\Omega)$ need not belong to the global space $\cG_{p,q}^\infty(\Omega)$.

We end this section by recalling various specialized asymptotic properties of nets and of generalized functions. A net $(r_{\varepsilon})_{\varepsilon \in (0,1]}$ is called a \emph{slow scale net} if
\[
	|r_{\varepsilon}|^{p} = O(\varepsilon^{-1})\quad {\rm as}\ \varepsilon \downarrow 0
\]
for every $p \ge 0$. It is called a \emph{log-type net} if
\[
	|r_{\varepsilon}|^{p} = O(|\log\eps|)\quad {\rm as}\ \varepsilon \downarrow 0.
\]
Further, if $|r_{\varepsilon}|^{p} = O(1)$ as $\varepsilon \downarrow 0$, the net is called \emph{bounded}. We refer to \cite{HO:2004} for a detailed discussion of slow scale and of logarithmic nets.

Finally, a generalized function $U$ from $\cG(\Omega)$ or from $\cG_{p,q}(\Omega)$ is called of \emph{bounded}, of \emph{logarithmic}, or of \emph{slow scale type}, if it has a representative $(u_{\varepsilon})_{\varepsilon \in (0,1]}$ such that $\sup_{x\in\Omega}|u_\eps(x)|$ is a bounded, log-type, or slow scale net, respectively. If such an estimate holds on every compact subset of $\Omega$, the generalized function is termed to be of \emph{locally bounded, locally logarithmic, or locally slow scale type}, respectively.

\begin{example}\label{ex : delta}
(a) Let $\varphi$ be a fixed element of $\Cinf(\mathbb{R})$ such that  $\varphi$ is symmetric, $\varphi^{\prime} \ge 0$ on $[-1,0]$, supp\,$\varphi \subset [-1,1]$ and $\int_{\mathbb{R}} \varphi(x)\,dx = 1$. Put $\varphi_{\varepsilon}(x) = \varphi(x/\varepsilon)/\varepsilon$. Then $U \in \G(\mathbb{R})$ defined by the class of $(\varphi_{\varepsilon})_{\varepsilon \in (0,1]}$ is associated with the delta function, and $\singsupp_{\Ginf} U = \{0\}$. \\
(b) On the other hand, if $U \in \G(\mathbb{R})$ is defined as the class of $(\varphi_{h(\varepsilon)})_{\varepsilon \in (0,1]}$, where $h(\eps) > 0$ and $(1/h(\varepsilon))_{\varepsilon \in (0,1]}$ is a slow scale net, then it is associated with the delta function again, but $\singsupp_{\Ginf} U = \emptyset$. In particular, this holds for the logarithmic regularization $h(\eps) = 1/|\log\eps|$.
\end{example}
At this stage, we are in the position to clarify the relations of the singular support of a Colombeau generalized function with the classical singular support of associated or imbedded distributions. First, the relation
$\Ginf(\omega) \cap \iota\big(\D^{\prime}(\omega)\big) = \Cinf(\omega)$ holds for every open subset $\omega$ of $\Omega$, whence
\[
   \singsupp_{\Ginf}\iota(f) = \singsupp f
\]
for every distribution $f\in \cD'(\Omega)$. However, if $U \in \G(\Omega)$ admits $w \in \D^{\prime}(\Omega)$ as associated distribution, there is no general relation between the $\Ginf$-singular support of $U$ and the singular support of $w$. Indeed, the generalized function $U$ from Example\;1%\ref{ex : delta}
(b) provides an example of a generalized function whose $\Ginf$-singular support is empty, while its associated distribution has nonempty singular support. On the other hand, the $\Ginf$-singular support of the generalized function $V$ represented by $(\varepsilon\varphi_{\varepsilon}(x))_{\varepsilon \in (0,1]}$ is $\{0\}$, while its associated distribution (which equals zero) has empty singular support. Consequently, knowledge of the singular support of the associated distribution does not suffice to determine the $\Ginf$-singular support of a Colombeau generalized function.

Following \cite{NPS:1998,Scarp:2004}, an element $U \in \G(\Omega)$ is said to be \emph{strongly associated} with a distribution $w\in \cD'(\Omega)$, if for every $\psi\in\cD(\Omega)$, $|\int u_\eps(x)\psi(x)dx - \langle w,\psi\rangle| = \cO(\eps^p)$ for some $p>0$. Further, $U$ and $w$ are said to be \emph{equal in the sense of generalized distributions}, if this estimate holds for every $p>0$ (see \cite{C:1985}). If $U$ is strongly associated with $w$, then $\singsupp w \subset \singsupp_{\Ginf} U$. If $U$ and $w$ are equal in the sense of generalized distributions, then $\singsupp w = \singsupp_{\Ginf} U$. However, solutions of partial differential equations are rarely known to be strongly associated with a distribution.

As a last ingredient, we need the notion of a \emph{compactly bounded} or \emph{c-bounded} generalized function
(see \cite[Def.\;1.2.7]{GKOS:2001}). Only the scalar case is required here. An element $U\in\cG(\R)$ is called c-bounded, if it has a representative $(u_\varepsilon)_{\varepsilon \in (0,1]}$ with the property
\begin{equation}\label{cbounded}
\forall K\Subset\R\ \exists K'\Subset\R \ \mbox{such\ that\ } u_\eps(K)\subset K'
\end{equation}
for all sufficiently small $\eps$. The importance of the notion lies in the fact that composition of c-bounded generalized functions is possible. More precisely, if $V\in\cG(\R)$ and $U$ is c-bounded, then $V\circ U$ is a well defined element of $\cG(\R)$.

\section{Existence and uniqueness of generalized solutions}\label{sec : 3}

In order to be able to detect the $\Ginf$-singular support of a generalized solution to the wave equation with non-smooth coefficients, it is necessary to have existence and uniqueness results that do not require logarithmic bounds on the coefficients (as is the case with the classical existence results for hyperbolic systems in $\cG$). Indeed, if the (discontinuous) coefficients of the equation are regularized by convolution with a mollifier $(\varphi_{h(\varepsilon)})_{\varepsilon \in (0,1]}$ as in Example\;1%\ref{ex : delta}
(b), they become $\Ginf$-regular generalized functions and contribute no $\Ginf$-singularities to the solution, in particular, no reflected or refracted waves, as will be seen below. It is the purpose of this section to provide existence and uniqueness results that can dispense with such slow scale or logarithmic regularizations and get along with standard mollifiers $(\varphi_{\varepsilon})_{\varepsilon \in (0,1]}$.

Indeed, there are a number of cases in which no logarithmic estimates are needed, when the setting of $\Gtwo$ is used: the conservative wave equation with propagation speed depending on $x$ only (in any space dimension), the one-dimensional wave equation with propagation speed depending on $x$, and the wave equation in any space dimension with $t$-depending propagation speed, bounded and of bounded variation. (The one-dimensional, $t$-dependent case was settled in \cite{DHO:2013}.) In all cases, a bound away from zero is required as well.

An alternative approach would consist in measuring the regularity of a generalized function based on an $h(\eps)$-scale, replacing the $\cO(\eps^{-N})$-bound in (\ref{eq:Ginf}) by an $\cO(h(\eps)^{-N})$-bound, as has been suggested in \cite{HH:2001}. This requires choosing $h(\eps)$ appropriately for the specific equation at hand. In this paper, we aim at removing the need for introducing a scale other than the standard scale $h(\eps) = \eps$ in the existence results, in order to be able to apply the standard $\Ginf$-regularity concepts.

\subsection{Existence theorems for conservative wave equations}
\label{subsec : 3.1}

We start with the conservative wave equation
\begin{equation}\label{eq:conswave}
\begin{array}{l}
   \p_t^2 u(t,x) - \div\big(c(x)\nabla u(t,x)\big) = 0, \quad t \geq 0,\ x\in \R^d, \vspace{4pt} \\
   u(0,x) = u_0(x), \quad \p_t u(0,x) = u_1(x), \quad x\in \R^d.
\end{array}
\end{equation}
When considering solutions whose representatives satisfy $L^2$-estimates in $t$ and $x$, we shall have to work on strips $[0,T]\times \R^d$, but with arbitrary $T>0$. Thus looking
for generalized solutions in the Colombeau algebra $\Gtwo([0,T]\times\R^d)$, we interpret Equation (\ref{eq:conswave}) as
\begin{equation}\label{eq:conswaveG}
\begin{array}{lr}
   \p_t^2 U - \div\big(C\nabla U\big) = 0 & {\rm in\ }  \Gtwo(\stripn)\vspace{4pt} \\
   U|_{t = 0} = U_0,\quad \partial_tU|_{t = 0} = U_1 & \mbox{in\ }  \G_{2,2}(\mathbb{R}^d)
\end{array}
\end{equation}
where the coefficient $C$ is a generalized function depending on $x\in\R^d$ only. We make the following assumptions about $C$.
\begin{itemize}
\item[(A1)] $C$ belongs to $\cG_{\infty,2}(\R^d)$, that is, it has a representative $(c_\eps)_{\eps \in (0,1]}$ all whose derivatives have global moderate $L^\infty$-bounds;
\item[(A2)] $C$ is strictly positive, that is, there is $m>0$ such that, for every representative,
    $0 < \eps^m \leq \inf_{x\in\R^d}\,c_\eps(x)$ as $\eps\downarrow 0$.
\end{itemize}
\begin{theorem}\label{thm:conswave}
Assume that $U_0, U_1 \in\Gtwo(\R^d)$ and that the assumptions $(A1)$ and $(A2)$ hold. Let $T>0$. Then the conservative wave equation $(\ref{eq:conswaveG})$ has a unique solution $U \in \Gtwo([0,T]\times \R^d)$.
\end{theorem}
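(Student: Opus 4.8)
\emph{Approach.} The plan is to solve at the level of representatives and then pass to the quotient. Fix a representative $(c_\eps)_\eps$ of $C$ satisfying (A1) and (A2), and representatives $(u_{0\eps})_\eps$, $(u_{1\eps})_\eps$ of $U_0$, $U_1$ in $\EMtwo(\R^d)$. For each fixed $\eps$ the coefficient $c_\eps$ is smooth, bounded together with all its derivatives, and bounded below by $\eps^m>0$, so $\p_t^2-\div(c_\eps\nabla\,\cdot\,)$ is strictly hyperbolic with smooth coefficients. Classical linear hyperbolic theory (energy estimates together with a Galerkin or semigroup construction) then yields a unique $u_\eps\in\Cinf([0,T];H^\infty(\R^d))\subset W^{\infty,2}(\stripn)$ solving $\p_t^2 u_\eps-\div(c_\eps\nabla u_\eps)=0$ exactly, with $u_\eps|_{t=0}=u_{0\eps}$ and $\p_t u_\eps|_{t=0}=u_{1\eps}$. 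It then remains to show (i) that $(u_\eps)_\eps\in\EMtwo(\stripn)$, so that it defines an element $U\in\Gtwo(\stripn)$ with the prescribed initial data, and (ii) that this class is independent of all choices, which simultaneously gives uniqueness.

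\emph{Moderateness.} The decisive structural fact is that $c_\eps$ does not depend on $t$. Consequently every time derivative $\p_t^k u_\eps$ again solves the homogeneous equation $\p_t^2 w-\div(c_\eps\nabla w)=0$, and its energy
\[
   E_k(t)=\tfrac12\int_{\R^d}\big(|\p_t^{k+1}u_\eps(t,x)|^2+c_\eps(x)\,|\nabla\p_t^k u_\eps(t,x)|^2\big)\,dx
\]
is exactly conserved, since integration by parts gives $E_k'(t)=0$. The initial energies $E_k(0)$ are moderate, because $\p_t^k u_\eps(0,\cdot)$ is obtained by applying powers of $\div(c_\eps\nabla\,\cdot\,)$ to $u_{0\eps}$ or $u_{1\eps}$, which are moderate in every $H^m$-norm by (A1) and the moderateness of the data. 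Conservation then yields moderate $L^2(\R^d)$-bounds, uniform in $t\in[0,T]$, for $\p_t^{k+1}u_\eps$ and, after division by the ellipticity constant supplied by (A2), for $\nabla\p_t^k u_\eps$; the functions $\p_t^k u_\eps$ themselves are controlled in $L^2(\R^d)$ by integrating $\p_t^{k+1}u_\eps$ in $t$. To reach pure and mixed spatial derivatives I would read the equation at fixed $t$ as the elliptic equation $\div(c_\eps\nabla\,\p_t^k u_\eps)=\p_t^{k+2}u_\eps$ and invoke elliptic regularity for $\div(c_\eps\nabla\,\cdot\,)$ on $\R^d$, whose constants depend only polynomially on $\eps^{-m}$ and on the moderate $L^\infty$-bounds of the derivatives of $c_\eps$. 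Iterating in the order of differentiation controls $\p_t^j\p_x^\beta u_\eps$ in $L^2(\R^d)$ uniformly in $t$, and integrating over the finite interval $[0,T]$ gives the $L^2(\stripn)$-estimates, so that $(u_\eps)_\eps\in\EMtwo(\stripn)$.

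\emph{Uniqueness.} Let $(u_\eps)_\eps$ and $(\tilde u_\eps)_\eps$ arise from two admissible choices, so that their coefficients differ by a net in $\cN_{\infty,2}$, their data by nets in $\cN_{2,2}$, and the equations hold up to nets in $\cN_{2,2}$. The difference $d_\eps=u_\eps-\tilde u_\eps$ satisfies $\p_t^2 d_\eps-\div(c_\eps\nabla d_\eps)=R_\eps$, where $R_\eps=\div\big((c_\eps-\tilde c_\eps)\nabla\tilde u_\eps\big)$ plus negligible source contributions; since a negligible net times a moderate net is negligible (and $\nabla\tilde u_\eps$ is moderate in $L^\infty$ by Sobolev imbedding), $R_\eps$ is negligible in $L^2$, as are the initial data of $d_\eps$. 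The energy identity with a source gives $\tfrac{d}{dt}\sqrt{E_d(t)}\le\tfrac1{\sqrt2}\|R_\eps(t)\|_{L^2}$ for the energy $E_d$ of $d_\eps$, whence $E_d(t)$ is negligible uniformly on $[0,T]$; using (A2) to recover $\nabla d_\eps$ and integrating in $t$ to recover $d_\eps$ itself, the zeroth- and first-order derivatives of $d_\eps$ are negligible in $L^2$. Because $c_\eps$ is $t$-independent, $\p_t R_\eps$ is again negligible, so the same argument applied to $\p_t^k d_\eps$, combined with the elliptic step above, propagates negligibility to all derivatives, i.e. $(d_\eps)_\eps\in\cN_{2,2}(\stripn)$.

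\emph{Main obstacle.} The crux is to bound \emph{all} derivatives without the exponential Gr\"onwall factor that a naive energy estimate for the $x$-differentiated equation would produce: there the top-order commutator term $(\nabla c_\eps)\cdot\nabla(\p_x^\beta u_\eps)$ forces a growth constant of order $\eps^{-m/2}\|\nabla c_\eps\|_{L^\infty}$, and $\exp(T\eps^{-M})$ is not moderate --- this is precisely why earlier existence results required slow-scale or logarithmic coefficients. The device that avoids this, and the reason the theorem can dispense with such bounds, is the $t$-independence and conservative (divergence) form: differentiating in $t$ preserves the homogeneous conservative structure, so the energies of the time derivatives are \emph{conserved} rather than merely Gr\"onwall-controlled, and the passage to spatial derivatives is purely elliptic and hence algebraic in $\eps$. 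Assumption (A2) then enters only through polynomial powers of $\eps^{-m}$, keeping every estimate within the moderate class.
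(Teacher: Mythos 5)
Your proof is correct and follows essentially the same route as the paper: exploit the $t$-independent conservative structure so that every time derivative $\p_t^k u_\eps$ satisfies the homogeneous equation with exactly conserved energy, then recover the spatial derivatives from the equation itself using the lower bound (A2) and the moderate bounds on derivatives of $c_\eps$, which is precisely how the paper avoids the non-moderate Gronwall factor $\exp(T\eps^{-N})$. The only notable (minor) difference is in uniqueness, where the paper invokes \cite[Prop.~3.4]{G:2005} to reduce the task to negligibility of the zeroth-order $L^2$-norm alone, whereas you propagate negligibility through all derivatives by repeating the recursion -- both are valid.
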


\begin{proof}
Given representatives of $U_0$, $U_1$ and $C$, classical existence theory provides a prospective representative
$(u_\eps)_{\eps \in (0,1]}$ where each $u_\eps$ belongs to $H^\infty(\stripn)$. It satisfies the equation
\begin{equation}\label{eq:conswave_eps}
\begin{array}{l}
   \p_t^2 u_\eps(t,x) - \div\big(c_\eps(x)\nabla u_\eps(t,x)\big) = 0, \quad t\in[0,T],\ x\in \R^d, \vspace{4pt} \\
   u_\eps(0,x) = u_{0\eps}(x), \quad \p_t u_\eps(0,x) = u_{1\eps}(x), \quad x\in \R^d.
\end{array}
\end{equation}
We have to prove that $(u_\eps)_{\eps \in (0,1]}$ and all its derivatives are moderate in the $L^2$-norms.

Multiply the differential equation (\ref{eq:conswave_eps}) by $\p_t u_\eps$ and integrate by parts (with respect to $x$):
\[
    \int_{\R^d}\big(\p_t^2u_\eps\,\p_tu_\eps + c_\eps(x) \nabla u_\eps\cdot\nabla\p_t u_\eps\big)\,dx = 0,
\]
\[
   \frac12\frac{d}{dt}\int_{\R^d}\big(|\p_tu_\eps|^2 + c_\eps(x) |\nabla u_\eps|^2\big)\,dx = 0.
\]
This gives a moderate bound on the $L^2$-norms of $\p_tu_\eps$ and $\nabla u_\eps$ in terms of the initial data. Since
$u_\eps(t,x) = u_{0\eps} + \int_0^t\p_t u_\eps(s,x)\,d s$, this also entails moderate bounds on the $L^2$-norm of $u_\eps$ on the strip $\stripn$.

Next, $\p_tu_\eps$ satisfies the wave equation as well, thus the same argument gives moderate bounds on the $L^2$-norms of
$\p_t^2u_\eps$ and $\p_t\nabla u_\eps$. From the equation we obtain
\begin{equation}\label{eq:waveexpanded}
   \p_t^2 u_\eps - c_\eps\Delta u_\eps - \nabla c_\eps\cdot\nabla u_\eps = 0
\end{equation}
and in turn moderate bounds on the $L^2$-norm of $\Delta u_\eps$, involving also the $L^\infty$-norm of $\nabla c_\eps$.
But $\|u_\eps\|_{L^2(\R^d)}+ \|\nabla u_\eps\|_{L^2(\R^d)} + \|\Delta u_\eps\|_{L^2(\R^d)}$ is an equivalent norm on $H^2(\R^d)$, thus a moderate bound on the $H^2$-norm of $u_\eps$ on $\stripn$ is obtained.

Next, we are aiming at moderate bounds on the $H^3$-norm of $u_\eps$. Considering that $\p_t^2u_\eps$ satisfies again the wave equation (\ref{eq:conswave_eps}), one gets moderate bounds on $\p_t^3u_\eps$ and $\nabla\p_t^2u_\eps$. Denoting by $\p$ one of the $x$-derivatives $\p_{x_i}$, we get from (\ref{eq:waveexpanded}):
\[
   \p_t^2\p u_\eps - c_\eps\Delta\p u_\eps - (\p c_\eps)\Delta u_\eps - \nabla\p c_\eps\cdot\nabla u_\eps - \nabla c_\eps\cdot \nabla\p u_\eps = 0.
\]
All terms except $c_\eps\Delta\p u_\eps$ are already known to have moderate bounds by the previous steps, thus the equation entails a moderate bound on the $L^2$-norm of $\Delta\p u_\eps$, and in turn on the $H^2$-norm of $\p u_\eps$. It remains to estimate terms of the form $\p_{x_i}\p_{x_j}\p_t u_\eps$. But an estimate of $\Delta\p_t u_\eps$ follows again from (\ref{eq:waveexpanded}), and the equivalent-norms-argument completes the step. Collecting all terms, we see that the $H^3$-norm of $u_\eps$ has moderate bounds.
In the same way, the higher order derivatives can be estimated.

To prove uniqueness, we consider the inhomogeneous equation
\begin{equation}
\begin{array}{l}
   \p_t^2 u_\eps(t,x) - \div\big(c_\eps(x)\nabla u_\eps(t,x)\big) = h_\eps(t,x), \quad t\in[0,T],\ x\in \R^d, \vspace{4pt} \\
   u_\eps(0,x) = h_{0\eps}(x), \quad \p_t u_\eps(0,x) = h_{1\eps}(x), \quad x\in \R^d
\end{array}
\end{equation}
where $h_\eps$ belongs to $\cN_{2,2}(\stripn)$ and $h_{0\eps}, h_{1\eps}$ belong to $\cN_{2,2}(\R^d)$. We have to prove that $u_\eps$ belongs to $\cN_{2,2}(\stripn)$ as well. Thanks to \cite[Prop. 3.4]{G:2005} it suffices to prove the negligibility of the $L^2$-norm of $u_\eps$ (no derivatives needed). As before, we get
\[
   \frac12\frac{d}{dt}\int_{\R^d}\big(|\p_tu_\eps|^2 + c_\eps(x) |\nabla u_\eps|^2\big)\,dx = \int_{\R^d} h_\eps\p_t u_\eps \,dx.
\]
Integrating, inserting the initial data and using the nonnegativity of $c_\eps$ gives
\begin{eqnarray*}
   \|\p_tu_\eps(t,\cdot)\|_{L^2(\R^d)}^2 &\leq& \|h_{1\eps}\|_{L^2(\R^d)}^2 + \|c_{\eps}\|_{L^\infty(\R^d)}\|\nabla h_{0\eps}\|_{L^2(\R^d)}^2 \\
   & +& \int_0^t\|h_\eps(s,\cdot)\|_{L^2(\R^d)}^2\,ds\ +\ \int_0^t\|\p_tu_\eps(s,\cdot)\|_{L^2(\R^d)}^2\,ds.
\end{eqnarray*}
By assumption, all the terms on the right hand side except the last one are negligible. By Gronwall's inequality, the negligibility of $\|\p_tu_\eps\|_{L^2(\stripn)}$ and then in turn of $\|u_\eps\|_{L^2(\stripn)}$ follows.
\end{proof}

If the initial data are $\Ginf$-regular, so is the solution, provided the coefficient $C\in \cG_{\infty,2}(\R^d)$ satisfies slow scale estimates, more precisely, assume that:
\begin{itemize}
\item[(A3)] $C$ and all its derivatives are of slow scale type, that is, there is a representative $(c_\eps)_{\eps \in (0,1]}$ all whose derivatives have global $L^\infty$-bounds in terms of a slow scale net;
\item[(A4)] $C$ is \emph{slow scale invertible}, that is, there is a slow scale net $(r_\eps)_{\eps \in (0,1]}$ such that, for every representative,
    $0 < 1/r_\eps \leq \inf_{x\in\R^d}\,c_\eps(x)$ as $\eps\downarrow 0$.
\end{itemize}

\begin{corollary}\label{cor:conswave}
Assume that $C$ satisfies the assumptions $(A3)$ and $(A4)$. Let $U_0, U_1\in\Gtwo^\infty(\R^d)$.
Then the solution $U$ of $(\ref{eq:conswaveG})$ belongs to $\Gtwo^\infty(\stripn)$.
\end{corollary}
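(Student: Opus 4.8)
The plan is to re-run the energy-estimate scheme of the proof of Theorem~\ref{thm:conswave}, but now tracking the $\eps$-powers carefully enough to exhibit a \emph{single} exponent $N$ that serves all derivatives simultaneously. First I would fix representatives $(u_{0\eps})$, $(u_{1\eps})$ of $U_0$, $U_1$ together with a single $N_0 \geq 0$ such that $\|\p^\alpha u_{0\eps}\|_{L^2(\R^d)} + \|\p^\alpha u_{1\eps}\|_{L^2(\R^d)} = O(\eps^{-N_0})$ for all $\alpha$; this is possible precisely because $U_0, U_1 \in \Gtwo^\infty(\R^d)$. I would also fix a representative $(c_\eps)$ of $C$ as in (A3) and a slow scale net $(r_\eps)$ as in (A4), and let $(u_\eps)$ be the associated classical solution of (\ref{eq:conswave_eps}), which lies in $H^\infty(\stripn)$ by classical theory exactly as in Theorem~\ref{thm:conswave}.

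The central observation is that, because the coefficient depends on $x$ only, the energy identity
\[
   \tfrac12\tfrac{d}{dt}\int_{\R^d}\big(|\p_tu_\eps|^2 + c_\eps|\nabla u_\eps|^2\big)\,dx = 0
\]
is an \emph{exact} conservation law, so no Gronwall argument, and hence no exponential amplification of the coefficient norms, enters at the level of the $\p_t$- and $\nabla$-estimates. Using $\|c_\eps\|_{L^\infty(\R^d)}$ from (A3) and the lower bound $c_\eps \geq 1/r_\eps$ from (A4), one therefore obtains
\[
   \|\p_t u_\eps(t,\cdot)\|_{L^2(\R^d)} + \|\nabla u_\eps(t,\cdot)\|_{L^2(\R^d)} \leq s_\eps\,O(\eps^{-N_0}),
\]
where $(s_\eps)$ is a product of finitely many slow scale nets, hence itself slow scale; the $L^2$-bound for $u_\eps$ then follows from $u_\eps = u_{0\eps} + \int_0^t\p_t u_\eps\,ds$ at the cost of the bounded factor $T$. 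I would then iterate as in Theorem~\ref{thm:conswave}: apply the same energy conservation to $\p_t^k u_\eps$ (which solves the same equation, with initial data $\p_t^k u_\eps(0)$, $\p_t^{k+1}u_\eps(0)$ computed from the equation and thus bounded by slow scale factors times $\eps^{-N_0}$), and recover the missing $x$-derivatives from the elliptic relation (\ref{eq:waveexpanded}) by dividing by $c_\eps$, i.e.\ by multiplying by $r_\eps$.

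The decisive point, and the only place I expect real care to be needed, is that the genuine power $\eps^{-N_0}$ inherited from the initial data \emph{never increases} along this induction: each differentiation of the equation introduces only derivatives of $c_\eps$ (slow scale by (A3)) and additional factors of $r_\eps$ (slow scale by (A4)), so the bound on $\|\p^\alpha u_\eps\|_{L^2(\stripn)}$ has the form $s_{\eps,\alpha}\,O(\eps^{-N_0})$ with $(s_{\eps,\alpha})$ a product of finitely many slow scale nets (and fixed powers thereof) depending on $\alpha$. For each fixed $\alpha$ such a product is again slow scale, and therefore $s_{\eps,\alpha} = O(\eps^{-1})$; consequently $\|\p^\alpha u_\eps\|_{L^2(\stripn)} = O(\eps^{-(N_0+1)})$. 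Since the implied constants and thresholds may depend on $\alpha$ while the exponent does not, the single choice $N = N_0 + 1$ works for all $\alpha$, which is exactly the requirement for $U \in \Gtwo^\infty(\stripn)$.
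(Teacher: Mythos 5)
Your proof is correct and follows essentially the same route as the paper: re-run the recursion from the proof of Theorem~\ref{thm:conswave}, observing that under (A3)--(A4) every factor entering the estimates (derivatives of $c_\eps$, and $1/c_\eps\leq r_\eps$ when solving for the highest-order spatial derivative) is slow scale, so the exponent inherited from the initial data degrades by at most one, uniformly in the derivative order. The only cosmetic difference is the bookkeeping: the paper pre-assigns exponent budgets $p_j>0$ with $\sum_{j}p_j=1$ to the recursion steps, while you invoke closure of slow scale nets under finite products to bound each accumulated factor $s_{\eps,\alpha}$ by $O(\eps^{-1})$ --- two devices formalizing the same underlying fact.
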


\begin{proof}
It is quite clear by following the same arguments. At each step, products of derivatives of $c_\eps$ and $u_\eps$ occur (when computing the initial data and when solving for the highest order derivative), containing only two factors each. Take $p_j > 0$, $\sum_{j=1}^\infty p_j = 1$, where each $p_j$ corresponds to one of the recursion steps. Any derivative of $c_\eps$ is of order $O(\eps^{-p_j})$ for whatever $j$. At each step, the previously estimated terms are multiplied by a factor of order $O(\eps^{-p_j})$. Hence all derivatives of $u_\eps$ will be of order $O(\eps^{-N-1})$, where $O(\eps^{-N})$ is the order of the $\Ginf$-initial data.
\end{proof}

\begin{remark}
The equation
\begin{equation}
\begin{array}{l}
   \p_t^2 u(t,x) - \div\big(c(x)\nabla u(t,x)\big) + \Phi(x) u(t,x) = 0, \quad t\in[0,T],\ x\in \R^d, \vspace{4pt} \\
   u(0,x) = u_0(x), \quad \p_t u(0,x) = u_1(x), \quad x\in \R^d
\end{array}
\end{equation}
with a non-negative potential $\Phi$ belonging to $\cG_{\infty,2}(\R^d)$ can be treated in the same way.
\end{remark}
\begin{remark}\label{rem:noncons}
In the non-conservative case
\begin{equation}\label{eq:nonconswave}
\begin{array}{l}
   \p_t^2 u_\eps(t,x) - c_\eps(x)\Delta u_\eps(t,x) = 0, \quad t\in[0,T],\ x\in \R^d, \vspace{4pt} \\
   u_\eps(0,x) = u_{0\eps}(x), \quad \p_t u_\eps(0,x) = u_{1\eps}(x), \quad x\in \R^d
\end{array}
\end{equation}
the energy estimate leads to
\[
    \int_{\R^d}\big(\p_t^2u_\eps\,\p_t u_{\varepsilon} + c_\eps(x) \nabla u_\eps\cdot\nabla\p_t u_\eps + \nabla c_\eps\cdot\nabla u_\eps\p_t u_\eps\big)\,dx = 0
\]
and so
\begin{equation}\label{eq:nonconsenergy}
   \frac12\frac{d}{dt}\int_{\R^d}\big(|\p_tu_\eps|^2 + c_\eps(x) |\nabla u_\eps|^2\big)\,dx \leq \int_{\R^d} |\nabla c_\eps|\,\big(|\nabla u_\eps|^2 + |\p_t u_\eps|^2\big)\,dx.
\end{equation}
If the initial data and the $L^\infty$-norm of $|\nabla c_\eps|$ are of order $\eps^{-N}$, an application of the Gronwall inequality
results in a bound on the $H^1$-norm of $u_\eps$ of the form $\eps^{-N}\ee^{T\eps^{-N}}$, which clearly is not moderate. We see that logarithmic estimates on $\nabla c_\eps$ would be needed to get a moderate bound on the $H^1$-norm of $u_\eps$, at least when continuing with this energy estimate.

This problem can be circumvented in the case of one space dimension.
\end{remark}

\subsection{Existence theorems for non-conservative wave equations}\label{subsec : 3.2}

We consider the one-dimensional non-conservative wave equation
\begin{equation}\label{eq:nonconswave1D}
\begin{array}{l}
   \p_t^2 u(t,x) - c(x)\p_x^2 u(t,x) = 0, \quad t\geq 0,\ x\in \R, \vspace{4pt} \\
   u(0,x) = u_0(x), \quad \p_t u(0,x) = u_1(x), \quad x\in \R.
\end{array}
\end{equation}
Taking one derivative with respect to $x$ results in the conservative wave equation
\begin{equation}\label{eq:nonconswave1Dderived}
\begin{array}{l}
   \p_t^2\p_x u(t,x) - \p_x\big(c(x)\p_x\p_x u(t,x)\big) = 0, \quad t\geq 0,\ x\in \R, \vspace{4pt} \\
   u(0,x) = u_0'(x), \quad \p_t\p_x u(0,x) = u_1'(x), \quad x\in \R
\end{array}
\end{equation}
for $\p_xu$. Thus the results from the previous subsection can be applied, and this will be the path around the difficulty mentioned in Remark\;4%\ref{rem:noncons}
. Note that this argument relies on the fact that the space dimension equals one.

Again, we interpret Equation (\ref{eq:nonconswave1D}) as
\begin{equation}\label{eq:nonconswave1DG}
\begin{array}{lr}
   \p_t^2 U - C\p_x^2 U = 0 & {\rm in\ }  \Gtwo(\strip)\vspace{4pt} \\
   U|_{t = 0} = U_0,\quad \partial_tU|_{t = 0} = U_1 & \mbox{in\ }  \G_{2,2}(\mathbb{R})
\end{array}
\end{equation}
where the coefficient $C$ is a generalized function depending on $x\in\R$ only. The assumptions on the coefficient $C \in \cG_{\infty,2}(\R)$ are the same as in Subsection\;\ref{subsec : 3.1} with $n=1$.
\begin{theorem}\label{thm:nonconswave}
Assume that $U_0, U_1 \in\Gtwo(\R)$ and that the assumptions $(A1)$ and $(A2)$ hold. Let $T>0$. Then the non-conservative wave equation $(\ref{eq:nonconswave1DG})$ has a unique solution $U \in \Gtwo([0,T]\times \R)$.
\end{theorem}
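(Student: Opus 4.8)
The plan is to exploit the one-dimensional reduction already displayed in~(\ref{eq:nonconswave1Dderived}): if $u$ solves the non-conservative equation~(\ref{eq:nonconswave1D}), then $\p_x u$ solves the \emph{conservative} equation, to which Theorem~\ref{thm:conswave} applies directly. This detour is essential, not cosmetic: multiplying~(\ref{eq:nonconswave1D}) by $\p_t u_\eps$ and integrating produces the extra term $\int_\R c_\eps'\,\p_t u_\eps\,\p_x u_\eps\,dx$, and since $\|c_\eps'\|_{L^\infty}$ is only moderate (of order $O(\eps^{-N})$, not logarithmic), Gronwall's inequality would force the non-moderate factor $\ee^{T\eps^{-N}}$, which is exactly the obstruction recorded in Remark~\ref{rem:noncons}. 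Differentiating once in $x$ removes this term because the $x$-derivative restores the divergence form.

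For existence I would fix representatives $(u_{0\eps})$, $(u_{1\eps})$, $(c_\eps)$ of $U_0$, $U_1$, $C$ and, as in the proof of Theorem~\ref{thm:conswave}, invoke classical hyperbolic theory (the coefficient $c_\eps$ being smooth, bounded, and for each fixed $\eps$ bounded away from zero) to obtain a prospective representative $u_\eps\in H^\infty(\strip)$ of~(\ref{eq:nonconswave1D}). Put $v_\eps:=\p_x u_\eps$; differentiating the equation in $x$ shows $v_\eps$ solves the conservative problem~(\ref{eq:nonconswave1Dderived}) with initial data $v_\eps|_{t=0}=u_{0\eps}'$, $\p_t v_\eps|_{t=0}=u_{1\eps}'$, which are moderate since derivatives of moderate nets are moderate. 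Because $C$ obeys (A1)--(A2), the energy estimates in the proof of Theorem~\ref{thm:conswave} apply verbatim to $v_\eps$ and yield moderate $L^2$-bounds on $v_\eps$ and all its derivatives; equivalently every $\p_t^j\p_x^k u_\eps$ with $k\ge 1$ is moderate. What is \emph{not} yet controlled is $u_\eps$ itself together with its pure time derivatives, on which $v_\eps$ carries no information. These I would read off the equation and the data: for $k\ge 2$, using that $c_\eps$ is $t$-independent, $\p_t^k u_\eps = c_\eps\,\p_x(\p_t^{k-2}v_\eps)$ is moderate by (A1); and for $k=0,1$ the identities $\p_t u_\eps(t,\cdot)=u_{1\eps}+\int_0^t\p_s^2u_\eps(s,\cdot)\,ds$ and $u_\eps(t,\cdot)=u_{0\eps}+\int_0^t\p_s u_\eps(s,\cdot)\,ds$, integrated over the bounded interval $[0,T]$, transfer moderateness from $\p_t^2u_\eps=c_\eps\p_x v_\eps$ to $\p_t u_\eps$ and $u_\eps$. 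Hence $(u_\eps)\in\EMtwo(\strip)$ and $U:=[(u_\eps)]$ solves~(\ref{eq:nonconswave1DG}).

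For uniqueness I would run the identical scheme with negligible data: for the inhomogeneous problem $\p_t^2 u_\eps-c_\eps\p_x^2u_\eps=h_\eps$, $u_\eps|_{t=0}=h_{0\eps}$, $\p_t u_\eps|_{t=0}=h_{1\eps}$ with $h_\eps\in\cN_{2,2}(\strip)$ and $h_{0\eps},h_{1\eps}\in\cN_{2,2}(\R)$, the net $v_\eps=\p_x u_\eps$ solves the inhomogeneous conservative equation with right-hand side $\p_x h_\eps$ and data $h_{0\eps}',h_{1\eps}'$, all negligible; the uniqueness part of the proof of Theorem~\ref{thm:conswave} (its Gronwall step) gives $v_\eps\in\cN_{2,2}(\strip)$. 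The same integration and equation arguments, now with negligible rather than moderate inputs, show $\p_t^2u_\eps=c_\eps\p_x v_\eps+h_\eps$, whence $\p_t u_\eps$, $u_\eps$, and every remaining derivative are negligible, so $(u_\eps)\in\cN_{2,2}(\strip)$ and $U=0$. The one genuinely delicate point — and the reason the result is restricted to a single space dimension — is the non-moderate Gronwall factor of Remark~\ref{rem:noncons}, which the reduction to $\p_x u$ is designed to avoid; the only secondary difficulty is the bookkeeping needed to recover $u_\eps$ and its pure time derivatives, since control of $\p_x u_\eps$ alone leaves the behaviour of $u_\eps$ undetermined.
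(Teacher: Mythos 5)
Your proposal is correct and takes essentially the same route as the paper's own proof: reduce to the conservative case by noting that $\p_x u_\eps$ solves (\ref{eq:nonconswave1Dderived}), apply the estimates of Theorem~\ref{thm:conswave} to it, recover $\p_t^2 u_\eps = c_\eps\p_x^2 u_\eps$ from the equation via (A1), and obtain $u_\eps$ itself by double integration from the initial data; uniqueness runs the same scheme with negligible inputs. Your write-up is merely more explicit than the paper's (which compresses the uniqueness part into ``proven along the same lines''), but the substance is identical.
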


\begin{proof} We begin by constructing a prospective representative $(u_\eps)_{\eps \in (0,1]}$ of the solution $U$. Observe that the classical solution $u_\eps$ to (\ref{eq:nonconswave}) belongs to $H^\infty(\strip)$, as follows from applying the energy estimate from Remark\;4 %\ref{rem:noncons}
to all derivatives. As lined out in Remark\;4%\ref{rem:noncons}
, we cannot use the energy estimate (\ref{eq:nonconsenergy}) directly to obtain moderate bounds on the $H^1$-norm of $u_\eps$. Rather, we employ the observation that $\p_x u_\eps(t,x)$ solves the conservative wave equation (\ref{eq:nonconswave1Dderived}). As shown in the proof of Theorem\;\ref{thm:conswave}, $(\p_x u_\eps)_{\eps \in (0,1]}$ belongs to $\EMtwo(\strip)$. It follows from the wave equation (\ref{eq:nonconswave}) and assumption (A1) that
\[
   \p_t^2 u_\eps(t,x) = c_\eps(x)\p_x^2 u_\eps(t,x)
\]
belongs to $\EMtwo(\strip)$ as well. Taking into account the initial data
\[
   \p_t u_\eps(0,x) = u_{1\eps}(x), \qquad u_\eps(0,x) = u_{0\eps}(x),
\]
a double integration finally shows that $(u_\eps)_{\eps \in (0,1]}$ belongs to $\EMtwo(\strip)$, and hence may serve as a representative of the generalized solution $U$.

Uniqueness is proven along the same lines.
\end{proof}
Global regularity follows immediately from Corollary\;\ref{cor:conswave}, because a generalized function $U$ has the $\Ginf$-property, whenever its derivatives from a certain order onwards have the $\Ginf$-property.
\begin{corollary}\label{cor:nonconswave}
Assume that $C$ satisfies the assumptions $(A3)$ and $(A4)$. Let $U_0, U_1\in\Gtwo^\infty(\R)$.
Then the solution $U$ of $(\ref{eq:nonconswave1DG})$ belongs to $\Gtwo^\infty(\strip)$.
\end{corollary}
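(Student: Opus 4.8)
The plan is to reduce to the conservative case exactly as in the existence proof of Theorem~\ref{thm:nonconswave} and then feed the outcome into the regularity Corollary~\ref{cor:conswave}. Fix the representative $(u_\eps)_{\eps\in(0,1]}$ constructed in Theorem~\ref{thm:nonconswave} and argue on the strip $\strip$. Since the defining bound $\|\p^\alpha\,\cdot\,\|_{L^2}=O(\eps^{-N})$ of $\Gtwo^\infty$ holds with one exponent $N$ for derivatives of every order, the hypothesis $U_0,U_1\in\Gtwo^\infty(\R)$ immediately gives $U_0',U_1'\in\Gtwo^\infty(\R)$ with the same $N$. As observed in the proof of Theorem~\ref{thm:nonconswave}, the net $(\p_x u_\eps)_\eps$ solves the conservative equation~(\ref{eq:nonconswave1Dderived}) with initial data $U_0',U_1'$; and because $C$ satisfies (A3) and (A4), Corollary~\ref{cor:conswave} applies to this equation and yields $\p_x U\in\Gtwo^\infty(\strip)$. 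Concretely, there is a single $N$ with $\|\p_t^a\p_x^b u_\eps\|_{L^2(\strip)}=O(\eps^{-N})$ for all $a\ge 0$ and all $b\ge 1$.

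What is still missing are uniform bounds on the pure time derivatives $\p_t^a u_\eps$. For $a\ge 2$ I would use the equation in the form $\p_t^2 u_\eps=c_\eps\,\p_x^2 u_\eps$; since $c_\eps$ is independent of $t$, differentiating gives $\p_t^a u_\eps=c_\eps\,\p_t^{a-2}\p_x^2 u_\eps$, whose right-hand side carries two $x$-derivatives and is therefore already controlled by the previous step. The estimate $\|\p_t^a u_\eps\|_{L^2(\strip)}\le\|c_\eps\|_{L^\infty(\R)}\,\|\p_t^{a-2}\p_x^2 u_\eps\|_{L^2(\strip)}$ together with the slow scale bound $\|c_\eps\|_{L^\infty(\R)}=O(\eps^{-1})$ from (A3) produces the uniform estimate $O(\eps^{-N-1})$ for every $a\ge 2$. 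Only the two lowest-order pure time derivatives $u_\eps$ and $\p_t u_\eps$ are then left; being finite in number and moderate (because $U\in\Gtwo(\strip)$ by Theorem~\ref{thm:nonconswave}), each satisfies some individual bound $O(\eps^{-N_0})$.

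Collecting these three families of estimates and taking the maximum of the finitely many exponents that occur yields one exponent $N'$ with $\|\p_t^a\p_x^b u_\eps\|_{L^2(\strip)}=O(\eps^{-N'})$ for all multi-indices $(a,b)$, that is, $U\in\Gtwo^\infty(\strip)$. This is exactly the principle invoked before the statement: a moderate generalized function is globally $\Ginf$-regular as soon as all its derivatives from some fixed order on share a common growth exponent. I expect the only delicate point -- the main obstacle -- to be the bookkeeping for the pure time derivatives, since $\p_x U\in\Gtwo^\infty$ says nothing directly about $\p_t^a u_\eps$: one has to genuinely trade, via the equation, the two missing time derivatives for two space derivatives and verify that multiplication by the coefficient net $c_\eps$ preserves the uniformity of the exponent.
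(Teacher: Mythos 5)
Your proposal is correct and takes essentially the same route as the paper: reduce to the conservative equation solved by $\p_x U$, apply Corollary~\ref{cor:conswave}, and recover the remaining derivatives, which is exactly what the paper compresses into the remark that a generalized function is $\Ginf$-regular as soon as its derivatives from some order onwards are. Your explicit treatment of the pure time derivatives via $\p_t^a u_\eps = c_\eps\,\p_t^{a-2}\p_x^2 u_\eps$ (using that $c_\eps$ is $t$-independent and slow scale) is a welcome filling-in of a step the paper leaves implicit, since $\p_x U\in\Gtwo^\infty$ alone says nothing about $\p_t^a U$.
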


\begin{remark} Construction of solutions in $\cG(\stripn)$. Every global $\EMtwo$-estimate implies local $\EM$-estimates. Thus the results of the previous subsections show that Equations\;(\ref{eq:conswaveG}) and (\ref{eq:nonconswave1DG}) possess solutions in
$\cG(\stripn)$, respectively in $\cG(\strip)$, provided the initial data have a representative in $\EMtwo$. This is the case, for example, if the initial data have compact support.

However, uniqueness is lost when $\Gtwo$ is replaced by $\cG$. As an example, consider the wave equation
\[
\begin{array}{l}
   \displaystyle \p_t^2 u_\eps(t,x) - \frac{1}{\eps^2}\p_x^2 u_\eps(t,x) = 0,  \vspace{4pt} \\
   u_\eps(0,x) = \varphi(x-1/\eps), \quad \p_t u_\eps(0,x) = 0
\end{array}
\]
where $\varphi \in \cD(\R)$ with $\varphi(0) = 2$, say. This net of functions represents the zero element in $\cG(\R)$. Nevertheless, the corresponding solution
\[
   u_\eps(t,x) = \frac12\big(\varphi(x - 1/\eps - t/\eps) + \varphi(x - 1/\eps + t/\eps)\big)
\]
does not belong to $\cN(\strip)$. Indeed, $u_\eps(1,0) = 1$ for all sufficiently small $\eps$. Note, however, that $\varphi(x-1/\eps)$ does not belong to $\cN_{2,2}(\R)$, hence does not represent the zero element of $\Gtwo(\R)$, for which case the solution has been proven to be unique.
\end{remark}

\subsection{The time-dependent case}\label{subsec : 3.3}

This subsection is devoted to the wave equation with coefficient $c(t)$ depending on time only. We consider
\begin{equation}\label{eq:timewave}
\begin{array}{l}
   \p_t^2 u(t,x) - c(t)\Delta u(t,x) = 0, \quad t\geq 0,\ x\in \R^d, \vspace{4pt} \\
   u(0,x) = u_0(x), \quad \p_t u(0,x) = u_1(x), \quad x\in \R^d.
\end{array}
\end{equation}
Looking for generalized solutions in the Colombeau algebra $\Gtwo(\stripn)$, we again interpret Equation (\ref{eq:timewave}) as
\begin{equation}\label{eq:timewaveG}
\begin{array}{lr}
   \p_t^2 U - C\Delta U = 0 & {\rm in\ }  \Gtwo(\stripn)\vspace{4pt} \\
   U|_{t = 0} = U_0,\quad \partial_tU|_{t = 0} = U_1 & \mbox{in\ }  \G_{2,2}(\mathbb{R}^d)
\end{array}
\end{equation}
where the coefficient $C$ is a generalized function depending on $t\in[0,\infty)$ only, about which we make the following assumptions. \begin{itemize}
\item[(A5)] $C$ belongs to $\cG_{\infty,2}[0,\infty)$, that is, it has a representative $(c_\eps)_{\eps \in (0,1]}$ all whose derivatives have global moderate $L^\infty$-bounds;
\item[(A6)] $C$ is bounded away from zero, that is, there is $b>0$ such that $0 < b \leq \inf_{t\in[0,\infty)}\,c_\eps(t)$ as $\eps\downarrow 0$;
\item[(A7)] $C'$ has global $L^1$-bounds in the sense that for all $T>0$ and some representative, $\|c_\eps'\|_{L^1(0,T)}$ is bounded independently of $\eps$ as $\eps\downarrow 0$.
\end{itemize}
\begin{theorem}\label{thm:timewave}
Assume that $U_0, U_1 \in\Gtwo(\R^d)$ and that the assumptions $(A5)-(A7)$ hold. Let $T>0$. Then the  wave equation $(\ref{eq:timewaveG})$ has a unique solution $U \in \Gtwo([0,T]\times \R^d)$.
\end{theorem}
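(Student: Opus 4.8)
The plan is to follow the energy method of Theorem~\ref{thm:conswave}, the essential new feature being the time dependence of the coefficient, which produces an extra term in the energy identity that must be controlled by assumption (A7). First I would fix representatives $(c_\eps)_{\eps}$, $(u_{0\eps})_{\eps}$, $(u_{1\eps})_{\eps}$ satisfying (A5)--(A7) together with the $\EMtwo$-bounds on the data, and invoke classical hyperbolic theory: for each fixed $\eps$ the operator $\p_t^2 - c_\eps(t)\Delta$ is strictly hyperbolic, since $c_\eps \geq b > 0$ by (A6) and $c_\eps$ is smooth and bounded by (A5), so there is a classical solution $u_\eps \in H^\infty(\stripn)$ of \eqref{eq:timewave} with the prescribed data. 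It then remains to establish moderate $L^2$-bounds for $u_\eps$ and all of its derivatives.

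The core estimate is obtained by multiplying the equation by $\p_t u_\eps$, integrating over $\R^d$, and integrating by parts in $x$, which yields the energy identity for $E_\eps(t) = \tfrac12\int_{\R^d}(|\p_t u_\eps|^2 + c_\eps(t)|\nabla u_\eps|^2)\,dx$. Because $c_\eps$ now depends on $t$, differentiating $E_\eps$ leaves the residual term $\tfrac12 c_\eps'(t)\int_{\R^d}|\nabla u_\eps|^2\,dx$, so that
\[
   \frac{d}{dt}E_\eps(t) = \frac12\, c_\eps'(t)\int_{\R^d}|\nabla u_\eps|^2\,dx \leq \frac{|c_\eps'(t)|}{b}\,E_\eps(t),
\]
where I have used (A6) in the form $\int|\nabla u_\eps|^2 \leq b^{-1}\int c_\eps|\nabla u_\eps|^2 \leq 2b^{-1}E_\eps(t)$. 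Gronwall's inequality gives $E_\eps(t) \leq E_\eps(0)\exp(b^{-1}\|c_\eps'\|_{L^1(0,T)})$, and here assumption (A7) is decisive: the exponential factor is bounded \emph{independently} of $\eps$, so $E_\eps$ inherits moderate bounds from the moderate data. This is precisely the point at which the $L^1$-control of $c_\eps'$ replaces the logarithmic estimates that Remark~\ref{rem:noncons} shows would otherwise be needed.

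To reach all derivatives I would exploit that $c$ depends only on $t$, so $\p_x^\alpha$ commutes with the operator: each $\p_x^\alpha u_\eps$ solves the same equation with data $\p_x^\alpha u_{0\eps}, \p_x^\alpha u_{1\eps}$, and the identical energy argument yields moderate $L^2$-bounds on $\p_t\p_x^\alpha u_\eps$ and $\nabla\p_x^\alpha u_\eps$ for every $\alpha$. Pure and mixed time derivatives are then recovered algebraically from the equation: $\p_t^2 u_\eps = c_\eps\Delta u_\eps$ gives moderate bounds on $\p_t^2\p_x^\alpha u_\eps$ via the moderate $L^\infty$-bound on $c_\eps$ in (A5), and differentiating in $t$ produces terms of the form $c_\eps^{(k)}\Delta\p_t^j u_\eps$, all moderate because every derivative of $c_\eps$ is moderate in $L^\infty$ by (A5) and the lower-order quantities are already controlled. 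A finite induction thus bounds every $\p_t^k\p_x^\alpha u_\eps$ in $L^2$, so $(u_\eps)_{\eps}\in\EMtwo(\stripn)$ and defines a solution $U\in\Gtwo(\stripn)$.

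For uniqueness I would run the energy estimate for the inhomogeneous problem with source $h_\eps\in\cN_{2,2}(\stripn)$ and negligible initial data: the identity acquires the extra term $\int h_\eps\p_t u_\eps\,dx$, which after Cauchy--Schwarz and Young's inequality is absorbed into $(|c_\eps'(t)|/b + 1)E_\eps(t) + \tfrac12\|h_\eps(t,\cdot)\|_{L^2}^2$. Since $\int_0^T(|c_\eps'|/b + 1)\,dt$ is bounded uniformly in $\eps$ by (A7), Gronwall now forces $E_\eps$, and hence $\|u_\eps\|_{L^2(\stripn)}$, to be negligible; by \cite[Prop.~3.4]{G:2005} negligibility of the $L^2$-norm alone suffices to conclude $(u_\eps)_{\eps}\in\cN_{2,2}(\stripn)$. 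The main obstacle is exactly the $c_\eps'(t)$ term in the energy identity: unlike the conservative $x$-dependent case of Theorem~\ref{thm:conswave}, the energy is no longer conserved, and the whole construction hinges on the uniform $L^1$-bound (A7) being just strong enough to keep the Gronwall exponential $\eps$-independent while remaining weaker than the logarithmic or slow-scale requirements.
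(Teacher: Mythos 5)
Your proposal is correct and follows essentially the same route as the paper's proof: the energy identity with the residual term $\tfrac12 c_\eps'(t)\int|\nabla u_\eps|^2\,dx$, Gronwall combined with the $\eps$-uniform bound on $\|c_\eps'\|_{L^1(0,T)}$ from (A7), moderate bounds for all $\p_x^\alpha u_\eps$ by commuting spatial derivatives through the operator, recovery of time and mixed derivatives algebraically from the equation via (A5), and uniqueness by the same estimate for the inhomogeneous problem reduced to the zeroth-order norm via \cite[Prop.~3.4]{G:2005}. The only cosmetic difference is that you run Gronwall on the full energy $E_\eps(t)$ rather than on $\int|\nabla u_\eps|^2\,dx$ alone, which changes nothing of substance.
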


\begin{proof}
Given representatives of $U_0$, $U_1$ and $C$, classical existence theory provides a representative
$(u_\eps)_{\eps \in (0,1]}$ where each $u_\eps$ belongs to $H^\infty(\stripn)$. It satisfies the equation
\begin{equation}\label{eq:timewave_eps}
\begin{array}{l}
   \p_t^2 u_\eps(t,x) - c_\eps(t)\Delta u_\eps(t,x) = 0, \quad t\in[0,T],\ x\in \R^d, \vspace{4pt} \\
   u_\eps(0,x) = u_{0\eps}(x), \quad \p_t u_\eps(0,x) = u_{1\eps}(x), \quad x\in \R^d.
\end{array}
\end{equation}
In order to prove that $(u_\eps)_{\eps \in (0,1]}$ and all its derivatives are moderate in the $L^2$-norms, multiply the differential equation (\ref{eq:timewave_eps}) by $\p_t u_\eps$ and integrate by parts (with respect to $x$):
\[
    \int_{\R^d}\big(\p_t^2u_\eps\,\p_tu_\eps + c_\eps(t) \nabla u_\eps\cdot\nabla\p_t u_\eps\big)\,dx = 0.
\]
Observing that
\[
   \p_t\big(c_\eps(t)|\nabla u_\eps|^2\big) = c_\eps(t)\p_t|\nabla u_\eps|^2 + c_\eps'(t)|\nabla u_\eps|^2 = 2c_\eps(t) \nabla u_\eps\cdot\nabla\p_t u_\eps + c_\eps'(t)|\nabla u_\eps|^2
\]
we arrive at
\[
   \frac12\frac{d}{dt}\int_{\R^d}\big(|\p_tu_\eps|^2 + c_\eps(t) |\nabla u_\eps|^2\big)\,dx = c_\eps'(t)\frac12\int_{\R^d}|\nabla u_\eps|^2\,dx
\]
whence
\[
   \int_{\R^d} |\nabla u_\eps|^2\,dx \leq \dfrac{1}{b}\int_{\R^d} (|u_{1\eps}|^2+c_\eps(0)|\nabla u_{0\eps}|^2)\,dx + \int_0^t\frac{|c_\eps'(s)|}{b}\int_{\R^d}|\nabla u_\eps|^2\,dx\,ds.
\]
By Gronwall's inequality,
\[
   \int_{\R^d} |\nabla u_\eps|^2\,dx \leq \dfrac{1}{b}\int_{\R^d} (|u_{1\eps}|^2+c_\eps(0)|\nabla u_{0\eps}|^2)\,dx\exp(\int_0^t\frac{|c_\eps'(s)|}{b}\,ds)
\]
which is finite by assumption. From
\[
   \int_{\R^d}|\p_tu_\eps|^2 \,dx \leq \int_{\R^d}(|u_{1\eps}|^2+c_\eps(0)|\nabla u_{0\eps}|^2) \,dx + \int_0^t|c_\eps'(s)|\int_{\R^d}|\nabla u_\eps|^2\,dx\,ds
\]
we get a bound on the $L^2$-norm of $\p_t u_\eps$, and then in turn on $u_\eps$ on the strip $\stripn$.

Next, observe that all $x$-derivatives of $u_\eps$ satisfy the same equation (\ref{eq:timewave_eps}), so similar bounds are obtained on their $L^2$-norms. Bounds on the $t$- and mixed derivatives can be obtained successively from the equation.

Uniqueness is proven in the same way.
\end{proof}

\begin{remark}
(i) As can be seen from the proof, it would suffice to have a lower bound on $c_\eps$ in terms of the inverse of a logarithmic net (in place of the constant $b$).

(ii) Usually, the wave equation
\[
   \p_t^2 u(t,x) - c(t)^2\Delta u(t,x) = 0, \quad t\geq 0,\ x\in \R^d
\]
is written with $c(t)^2$ in place of $c(t)$. In this case one has to assume a uniform, global upper bound $B$ on $c_\eps(t)$, in addition, in order that the derivative of $c_\eps(t)^2$ remains integrable.
\end{remark}

\subsection{Scalar transport equations}
\label{subsec:transport}

Existence and uniqueness results for hyperbolic first order systems of partial differential and pseudodifferential equations have been obtained in the Colombeau setting in \cite{GO:2011b, LO:1991, O:1989, O:2008}, requiring certain logarithmic or slow scale estimates on the propagation speed. In the next subsection, we will show that these estimates can be replaced by requiring upper and lower bounds. The present subsection serves to prepare the arguments in the scalar case.

We consider the scalar transport equation
\begin{equation}\label{eq:transport1D}
\begin{array}{l}
  (\p_t + c(x)\p_x)u(t,x) = 0,  \vspace{4pt} \\
   u(0,x) = u_0(x)
\end{array}
\end{equation}
with transport velocity depending on $x$ only. We are interested in existence and uniqueness of a solution in the Colombeau algebra $\cG(\hpl)$. Thus we interpret problem (\ref{eq:transport1D}) as
\begin{equation}\label{eq:transport1DG}
\begin{array}{lr}
   \p_t U + C\p_x U = 0 & {\rm in\ }  \cG(\hpl)\vspace{4pt} \\
   U|_{t = 0} = U_0, & \mbox{in\ }  \cG(\R)
\end{array}
\end{equation}
where the coefficient $C$ is a generalized function depending on $x\in\R$ only. In the quoted earlier results, the coefficient $C$ was allowed to depend on $x$ and $t$, but it was required that $C$ was of bounded type and $\p_xC$ of logarithmic type. Here, the logarithmic type will be replaced by an estimate from below. We make the following assumptions.
\begin{itemize}
\item[(A8)] $C$ belongs to $\cG_{\infty,2}(\R)$, that is, it has a representative $(c_\eps)_{\eps \in (0,1]}$ all whose derivatives have global moderate $L^\infty$-bounds;
\item[(A9)] $C$ is bounded away from zero and is of bounded type, that is, there are constants $b_0, b_1 > 0$ such that, for every representative,
    $0 < b_0 \leq c_\eps(x)\leq b_1$ for all $x\in\R$ as $\eps\downarrow 0$.
\end{itemize}

Note that at fixed $\eps$ the coefficient $c_\eps(x)$ does not change sign on $\R$. The standard situation, arising from regularization of piecewise constant functions which are either positive everywhere or negative everywhere, is that $c_\eps(x)$ has the same sign for all $x$ and all $\eps$.

For $x\in \R$, define
\[
   C_\eps(x) = \int_0^x\frac{dy}{c_\eps(y)}.
\]
For the sake of exposition, consider the case when $c_\eps > 0$. Then the inequalities $0 < b_0 \leq c_\eps(x) \leq b_1$
imply that
\begin{eqnarray*}
   \frac{x}{b_1} \leq C_\eps(x) \leq \frac{x}{b_0} && (x > 0),\\
   \frac{x}{b_0} \leq C_\eps(x) \leq \frac{x}{b_1} && (x < 0).
\end{eqnarray*}
Thus $C_\eps$ is c-bounded (cf. (\ref{cbounded})) and surjective. In the case under discussion, it is also strictly increasing. Its inverse satisfies
\begin{eqnarray*}
   b_0y \leq C_\eps^{-1}(y) \leq b_1y && (y > 0),\\
   b_1y \leq C_\eps^{-1}(y) \leq b_0y && (y < 0).
\end{eqnarray*}
Thus $C_\eps^{-1}(y)$ is c-bounded, and its derivative is given by $\big(C_\eps^{-1}\big)'(y) = c_\eps\big(C_\eps^{-1}(y)\big)$.
Due to the c-boundedness of $C_\eps^{-1}(y)$, this is a well-defined generalized function. In conclusion, $(C_\eps)_{\eps\in(0,1]}$ represents an invertible generalized function, which is c-bounded, together with its inverse.

The characteristic curves $x(\tau) = \gamma_\eps(t,x,\tau)$ are obtained as solutions to the differential equation
\[
  \dot{x}(\tau) = c_\eps(x(\tau)),\qquad x(t) = \gamma_\eps(t,x,t) = x
\]
where the dot signifies differentiation with respect to $\tau$. Integrating this equality gives
\[
    \int_t^\tau\frac{ \dot{x}(s)\,ds}{c_\eps(x(s))} = \tau - t,
\]
or
\[
   C_\eps(x(\tau)) - C_\eps(x) = \tau - t,
\]
\[
   \gamma_\eps(t,x,\tau) = x(\tau) = C_\eps^{-1}\big(C_\eps(x) +\tau -t\big).
\]
Due to the c-boundedness of all participating generalized functions, this represents again a well-defined generalized function, which is c-bounded.
\begin{theorem}\label{thm:transport1DG}
Assume that $U_0 \in\G(\R)$ and that the assumptions $(A8)$ and $(A9)$ hold. Then problem $(\ref{eq:transport1DG})$ has a unique solution $U\in\cG(\hpl)$.
\end{theorem}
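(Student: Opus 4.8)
The plan is to construct the solution explicitly by the method of characteristics, exploiting that all the participating generalized functions --- in particular the characteristic flow $\gamma_\eps(t,x,\tau) = C_\eps^{-1}(C_\eps(x)+\tau-t)$ --- have already been shown above to be c-bounded. For existence I would fix representatives $(c_\eps)$ of $C$ and $(u_{0\eps})$ of $U_0$ and take as prospective representative of the solution the net obtained by transporting the data back along characteristics to $\tau=0$,
\[
   u_\eps(t,x) = u_{0\eps}\big(\gamma_\eps(t,x,0)\big) = u_{0\eps}\big(C_\eps^{-1}(C_\eps(x)-t)\big).
\]
Since the foot-point map $(t,x)\mapsto\gamma_\eps(t,x,0)$ is c-bounded, this composition is a legitimate element of $\cG(\hpl)$ (composition with a c-bounded generalized function is admissible). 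Using $\p_t\gamma_\eps(t,x,0) = -c_\eps(\gamma_\eps)$ and $\p_x\gamma_\eps(t,x,0) = c_\eps(\gamma_\eps)/c_\eps(x)$, a one-line computation shows that $(\p_t+c_\eps\p_x)u_\eps = 0$ holds \emph{exactly} and $u_\eps(0,x)=u_{0\eps}(x)$, so no ideal terms are produced at this stage.

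The moderateness of $(u_\eps)$ is where the c-boundedness does the real work. On a compact set $K\Subset\hpl$, c-boundedness confines the arguments $\gamma_\eps(t,x,0)$ to a single compact set $K'\Subset\R$ for all small $\eps$; there the moderateness of $U_0$ gives $\sup_{K'}|u_{0\eps}^{(k)}| = \cO(\eps^{-N})$ for every $k$. Differentiating the composition and repeatedly using $(C_\eps^{-1})'(y)=c_\eps(C_\eps^{-1}(y))$, every derivative $\p^\alpha u_\eps$ is a finite sum of products of derivatives of $u_{0\eps}$ at foot points in $K'$, derivatives of $c_\eps$ at c-bounded points, and the factor $1/c_\eps$; the latter families are moderate by (A8)--(A9), so $\sup_K|\p^\alpha u_\eps|=\cO(\eps^{-M})$. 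Hence $(u_\eps)\in\EM(\hpl)$ and its class $U$ solves $(\ref{eq:transport1DG})$.

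For uniqueness and well-definedness I would pass to the inhomogeneous problem $(\p_t+c_\eps\p_x)u_\eps=h_\eps$, $u_\eps(0,\cdot)=h_{0\eps}$ and integrate along the characteristics of $c_\eps$, obtaining
\[
   u_\eps(t,x) = h_{0\eps}\big(\gamma_\eps(t,x,0)\big) + \int_0^t h_\eps\big(\tau,\gamma_\eps(t,x,\tau)\big)\,d\tau.
\]
In the uniqueness case $h_{0\eps}\in\Neg(\R)$ and $h_\eps\in\Neg(\hpl)$, and both terms are negligible in $L^\infty$ on compact sets because $\gamma_\eps$ confines the evaluation points to a fixed compact set on which negligibility persists. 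Negligibility of every derivative then follows by differentiating this representation: $x$-derivatives fall on $h_{0\eps}$, $h_\eps$ (whose derivatives remain negligible) and on the moderate derivatives of $\gamma_\eps$, while $t$-derivatives are recovered from $\p_t u_\eps = h_\eps - c_\eps\p_x u_\eps$; hence $(u_\eps)\in\Neg(\hpl)$. For independence of the chosen representatives, if $(\tilde c_\eps)$, $(\tilde u_{0\eps})$ are alternative representatives, the difference $w_\eps$ of the two solution nets satisfies $(\p_t+c_\eps\p_x)w_\eps = -(c_\eps-\tilde c_\eps)\p_x\tilde u_\eps$ with datum $u_{0\eps}-\tilde u_{0\eps}$; here $c_\eps-\tilde c_\eps$ and $u_{0\eps}-\tilde u_{0\eps}$ are negligible while $\p_x\tilde u_\eps$ is moderate by the previous paragraph, so the uniqueness estimate forces $w_\eps\in\Neg(\hpl)$.

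The step I expect to be the crux is not any single estimate but the legitimacy of the whole scheme: the solution is intrinsically a \emph{composition} of generalized functions, which is in general meaningless in $\cG$. What makes the argument go through --- and what the lengthy preparation preceding the theorem was for --- is that $C_\eps$, its inverse, and the flow $\gamma_\eps$ are all c-bounded. Once this is in hand, the chain-rule estimates for moderateness and negligibility are routine, with (A8) controlling the growth of the coefficient's derivatives and (A9) supplying the bound away from zero that keeps $1/c_\eps$ and $C_\eps^{-1}$ under control.
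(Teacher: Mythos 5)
Your proposal is correct and follows essentially the same route as the paper: the solution is built as $u_{0\eps}\circ\gamma_\eps(\cdot,\cdot,0)$, with c-boundedness of the characteristic flow legitimizing the composition, and uniqueness is obtained from the integral representation along characteristics for the inhomogeneous problem with negligible data. The only differences are expository --- you spell out the chain-rule moderateness and the negligibility of higher derivatives explicitly, where the paper invokes the composition theorem for c-bounded generalized functions and the standard fact that the zeroth-order null estimate suffices.
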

\begin{proof} A representative of the prospective solution is $u_\eps(t,x) = u_{0\eps}(\gamma_\eps(t,x,0))$. Since $\gamma_\eps$ is c-bounded, $u_\eps$ represents a well-defined element of $\cG(\hpl)$. A simple calculation shows that
\[
    \p_x\gamma_\eps(t,x,0) = c_\eps(\gamma_\eps(t,x,0))/c_\eps(x),\qquad
        \p_t \gamma_\eps(t,x,0) = -c_\eps(\gamma_\eps(t,x,0)),
\]
thus $u_\eps$ solves the differential equation.

To prove uniqueness, it suffices again to establish the null estimate of order zero. Thus assume that some moderate $v_\eps$
satisfies
\[
\begin{array}{l}
  (\p_t + c_\eps(x)\p_x)v_\eps(t,x) = h_\eps(t,x),  \vspace{4pt} \\
   v_\eps(0,x) = h_{0\eps}(x)
\end{array}
\]
where $h_\eps$ and $h_{0\eps}$ represent null elements. It holds that
\[
  v_\eps(t,x) = h_{0\eps}(\gamma_\eps(t,x,0)) + \int_0^t h_\eps(\tau,\gamma_\eps(t,x,\tau))\,d\tau.
\]
Due to the c-boundedness of $\gamma_\eps(t,x,\tau)$, null estimates of $h_\eps$ and $h_{0\eps}$ on compact sets translate into similar estimates on $v_\eps$, which hence is negligible.
\end{proof}
\begin{remark}
In the same way, existence and uniqueness of a solution in $\cG_{\infty,\infty}(\hpl)$ can be proven.
\end{remark}
\begin{example}
The boundedness of the coefficient $c_\eps(x)$ does not suffice to obtain a moderate representative of the solution, as shown by the following example in which $c_\eps(x)$ changes sign. Indeed, consider equation (\ref{eq:transport1D}) with
\[
   c_\eps(x) = -\tanh\frac{x}{\eps}.
\]
Again, the characteristic curves $x(\tau) = \gamma_\eps(t,x,\tau)$ are obtained as solutions to the differential equation
\[
  \dot{x}(\tau) = -\tanh\frac{x(\tau)}{\eps},\qquad x(t) = \gamma_\eps(t,x,t) = x
\]
Integrating this equality gives
\[
    \int_t^\tau\frac{ \dot{x}(s)\,ds}{\tanh\frac{x(s)}{\eps}}
       = \eps \int_{x/\eps}^{x(\tau)/\eps}\frac{dy}{\tanh y} = -(\tau - t),
\]
or
\[
   \log\left(\frac{\sinh \frac{x(\tau)}{\eps}}{\sinh \frac{x}{\eps}}\right) = \frac{t}{\eps} - \frac{\tau}{\eps}
\]
and finally
\[
   \gamma_\eps(t,x,\tau) = x(\tau) = \eps\Arsinh\big(\ee^{(t-\tau)/\eps}\,\sinh\frac{x}{\eps}\big).
\]
For the sake of simplicity, assume the initial data are classical, i.e., $u_0\in H^\infty(\R)$. Then the corresponding solution is given by
\begin{equation}\label{eq:solution_ex}
   u_\eps(t,x) = u_0(\gamma_\eps(t,x,0)) = u_0\Big(\eps\Arsinh\big(\ee^{t/\eps}\,\sinh\frac{x}{\eps}\big)\Big).
\end{equation}
Observe that
\[
   \p_x u_\eps(t,x) = u_0'\Big(\eps\Arsinh\big(\ee^{t/\eps}\,\sinh\frac{x}{\eps}\big)\Big)
       \left(1 + \big(\ee^{t/\eps}\,\sinh\frac{x}{\eps}\big)^2\right)^{-1/2}\ee^{t/\eps}\,\cosh\frac{x}{\eps}.
\]
Thus $\p_xu_\eps(t,0) = u_0'(0)\ee^{t/\eps}$ is not moderate if $u_0'(0)\neq 0$.

To further understand the behavior of the solutions $u_\eps(t,x)$, observe that they have a distributional limit, namely
\[
   u_\eps(t,x) \to \left\{\begin{array}{ll}
                   u_0(x+t), & x > 0,\\
                   u_0(x-t), & x < 0.
                   \end{array}\right.
\]
Indeed, it follows from l'H\^{o}pital's rule that
\[
  \lim_{\eps\downarrow 0} \eps\Arsinh\big(\ee^{t/\eps}\,\sinh\frac{x}{\eps}\big) = x + t\sign x
\]
for $t>0, x\neq 0$. As $u_0$ was supposed to be a classical bounded function, the assertion follows.
\end{example}
\begin{example}
Consider again equation (\ref{eq:transport1D}), taking the coefficient
\[
   c_\eps(x) = \tanh\frac{x}{\eps}
\]
of the opposite sign. In this case, the solution
\[
   u_\eps(t,x) =  u_0\Big(\eps\Arsinh\big(\ee^{-t/\eps}\,\sinh\frac{x}{\eps}\big)\Big)
\]
is moderate, more precisely, defines an element of $\EM(\hpl)$. This can be most easily seen by observing that all partial derivatives of the function $v(t,x) = \Arsinh\big(\ee^{-t}\,\sinh x\big)$ are bounded and that $u_\eps(t,x) = u_0\big(\eps v(t/\eps,x/\eps)\big)$. The solutions $u_\eps(t,x)$ again have a distributional limit, namely
\[
   u_\eps(t,x) \to \left\{\begin{array}{ll}
                   u_0(x-t), & x > t,\\
                   u_0(0), & |x| < t,\\
                   u_0(x+t), & x < -t.
                   \end{array}\right.
\]
Indeed, for $|x| < t$, we use that $|\sinh \frac{x}{\eps}| \leq \ee^{|x|/\eps}$, so $\eps\Arsinh\big(\ee^{-t/\eps}\,\sinh\frac{x}{\eps}\big) \to 0$ as $\eps\downarrow 0$. For $|x| > t$, $\Arsinh\big(\ee^{-t/\eps}\,\sinh\frac{x}{\eps}\big) \to \pm\infty$ and l'H\^{o}pital's rule gives again the result.
\end{example}

\subsection{Hyperbolic systems}

The results on transport equations can be extended to hyperbolic ($m\times m$)-systems in diagonal form
\begin{equation}\label{eq:system}
\begin{array}{l}
  (\p_t + c_i(x)\p_x)u_i(t,x) = \sum_{j=1}^m a_{ij}(t,x)u_j(t,x), \vspace{4pt} \\
   u_i(0,x) = u_{0i}(x)
\end{array}
\end{equation}
for $i = 1,\ldots, m$. Concerning the coefficients $c_i(x)$, the same assumptions as in the previous subsection are in effect. Concerning the coefficients $a_{ij}(t,x)$, logarithmic type has to be assumed as in earlier papers. The novelty of the result is that no logarithmic estimates on the derivatives of $c_i(x)$ are required.

Again, we interpret problem (\ref{eq:system}) in the Colombeau setting as
\begin{equation}\label{eq:systemG}
\begin{array}{lr}
   \p_t U_i + C_i\p_x U = \sum_{j=1}^m A_{ij}U_j & {\rm in\ }  \cG(\hpl)\vspace{4pt} \\
   U_i|_{t = 0} = U_{0i}, & \mbox{in\ }  \cG(\R)
\end{array}
\end{equation}
for $i = 1,\ldots, m$, where the coefficients $C_i$ are generalized functions depending on $x\in\R$ only.

\begin{theorem}\label{thm:systemG}
Assume that the initial data $U_{0i}$ belong to $\cG(\R)$, the coefficients $C_i$ satisfy assumptions $(A8)$ and $(A9)$, and the coefficients $A_{ij}$ belong to $\cG(\hpl)$ and are of local logarithmic type. Then problem $(\ref{eq:systemG})$ has a unique solution $u\in\cG(\hpl)$.
\end{theorem}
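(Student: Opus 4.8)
The plan is to adapt the method of characteristics from the scalar case (Theorem~\ref{thm:transport1DG}) to the coupled system, the essential new ingredient being a Gronwall argument that keeps the merely moderate derivatives of the $c_i$ out of the exponent. First I would fix representatives $(c_{i,\eps})$, $(a_{ij,\eps})$, $(u_{0i,\eps})$; at each fixed $\eps$ the system~(\ref{eq:system}) has smooth, bounded coefficients, so classical hyperbolic theory yields a unique smooth solution $(u_{i,\eps})$ on $\hpl$. Writing $\gamma_{i,\eps}(t,x,\tau)$ for the characteristic flow associated with $c_{i,\eps}$ as constructed in Subsection~\ref{subsec:transport} --- which is c-bounded together with its inverse --- integration of the $i$-th equation along its own characteristics gives the Duhamel representation
\[
   u_{i,\eps}(t,x) = u_{0i,\eps}\big(\gamma_{i,\eps}(t,x,0)\big) + \int_0^t \sum_{j=1}^m a_{ij,\eps}\big(\tau,\gamma_{i,\eps}(t,x,\tau)\big)\, u_{j,\eps}\big(\tau,\gamma_{i,\eps}(t,x,\tau)\big)\,d\tau .
\]
This identity, not the differentiated equation, is the object I would estimate.

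For the order-zero moderate estimate I would fix a compact $K\Subset\hpl$; by c-boundedness of the $\gamma_{i,\eps}$ the arguments $\gamma_{i,\eps}(t,x,\tau)$ remain in a fixed compact $K'$ for $(t,x)\in K$ and $\tau\in[0,t]$. Putting $g_i(t)=\sup_x|u_{i,\eps}(t,x)|$ over the relevant slice, the representation yields $g_i(t)\le \sup|u_{0i,\eps}| + \int_0^t \sum_j \|a_{ij,\eps}\|_{L^\infty(K')}\,g_j(\tau)\,d\tau$. Since the $A_{ij}$ are of local logarithmic type, $\|a_{ij,\eps}\|_{L^\infty(K')}=O(|\log\eps|)$, so the coupled Gronwall inequality produces the factor $\exp(CT|\log\eps|)=O(\eps^{-CT})$, which is moderate; hence each $g_i$ is moderate.

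The crux --- and the step I expect to carry the argument --- is the passage to higher $x$-derivatives. Differentiating the Duhamel identity $k$ times in $x$, the only term containing the top derivative $\p_x^k u_{j,\eps}$ is $a_{ij,\eps}(\tau,\gamma)\,(\p_x^k u_{j,\eps})(\tau,\gamma)\,(\p_x\gamma_{i,\eps})^k$. Here the decisive observation is that $\p_x\gamma_{i,\eps}(t,x,\tau)=c_{i,\eps}(\gamma_{i,\eps}(t,x,\tau))/c_{i,\eps}(x)$ is \emph{bounded} by assumption (A9) (it lies in $[b_0/b_1,b_1/b_0]$), so its powers are bounded; thus the Gronwall coefficient for $\sup_x|\p_x^k u_{i,\eps}|$ is again $O(|\log\eps|)$ and the exponential stays moderate. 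All remaining terms are forcing terms, in which the derivatives of the $c_i$ enter only through higher derivatives of $\gamma_{i,\eps}$ (moderate) and the derivatives of $a_{ij}$ (moderate, as $A_{ij}\in\cG$), multiplied by derivatives $\p_x^l u_{j,\eps}$ with $l<k$ that are already moderate by induction. This is exactly why one must differentiate the characteristic representation rather than the equation: differentiating~(\ref{eq:system}) directly produces the term $-c_i'\,\p_x u_i$, placing the non-logarithmic $c_i'$ into the Gronwall exponent and destroying moderateness. Once all $x$-derivatives are controlled, the equation gives $\p_t u_{i,\eps}=-c_{i,\eps}\p_x u_{i,\eps}+\sum_j a_{ij,\eps}u_{j,\eps}$, and iterating yields moderate bounds on all $t$- and mixed derivatives; hence $(u_{i,\eps})\in\EM(\hpl)$.

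Finally, uniqueness reduces, as in Theorem~\ref{thm:transport1DG}, to the order-zero null estimate. If $(v_{i,\eps})$ is moderate and solves the system with initial data and right-hand side in $\Neg$, the same Duhamel representation --- now carrying an additional negligible source integrated along characteristics --- together with the identical logarithmic Gronwall estimate gives $\sup_x|v_{i,\eps}|=O(\eps^{N-CT})$ for every $N$, hence negligibility of order zero on every compact set, which then propagates to full negligibility. Thus the solution in $\cG(\hpl)$ is unique.
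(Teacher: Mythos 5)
Your proposal is correct, and its zeroth-order step (integral equations along characteristics on a domain-of-dependence region, Gronwall with the logarithmic bound on $\|a_{ij,\eps}\|_{L^\infty}$) is exactly the paper's. Where you genuinely diverge is the passage to higher derivatives. You differentiate the Duhamel identity $k$ times in $x$ and observe that the only occurrence of the top derivative $\p_x^k u_{j,\eps}$ carries the coefficient $a_{ij,\eps}\,(\p_x\gamma_{i,\eps})^k$, where $\p_x\gamma_{i,\eps}=c_{i,\eps}(\gamma_{i,\eps})/c_{i,\eps}(x)\in[b_0/b_1,\,b_1/b_0]$ is bounded by (A9); hence the Gronwall coefficient stays logarithmic at every order, and all derivatives of $c_{i,\eps}$ are relegated to forcing terms multiplying lower-order derivatives that are moderate by induction. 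The paper never runs Gronwall on $x$-derivatives at all: it exploits the fact that $\p_t$ commutes with $c_{i}(x)\p_x$, so the pure $t$-derivatives $\p_t^k u_{i,\eps}$ satisfy systems of exactly the same form, with moderate forcing inherited from earlier steps, and are estimated by the same order-zero Gronwall argument; the $x$- and mixed derivatives are then recovered \emph{algebraically}, by solving the differentiated equations (\ref{eq:dtsystem}) and (\ref{eq:dxsystem}) for the single remaining top-order term $c_{i,\eps}\p_x(\cdots)$ and dividing by $c_{i,\eps}\ge b_0$, in the carefully ordered recursion $\p_t^k$, then mixed, then pure $x$-derivatives. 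Your route buys a single uniform induction on the $x$-derivative order, at the price of Fa\`a di Bruno bookkeeping and of using both bounds in (A9) through $\p_x\gamma_{i,\eps}$; the paper's route avoids all chain-rule combinatorics, at the price of the staggered recursion, using the lower bound only for the division (the upper bound enters, in both arguments, through the domain-of-dependence geometry). Both proofs hinge on the coefficients depending on $x$ alone --- in your argument because $\p_x\gamma_{i,\eps}$ would otherwise involve an exponential of $\int \p_x c_{i,\eps}$ along characteristics, in the paper's because $\p_t$ would no longer commute with the principal part --- and your reduction of uniqueness to the order-zero null estimate matches what the paper leaves implicit in ``along the same lines.''
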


\begin{proof} Given representatives of $U_{0i}$, $C_i$ and $A_{ij}$, classical existence theory provides a representative
$(u_\eps)_{\eps \in (0,1]}$ of the prospective solution where each $u_\eps$ belongs to $\Cinf(\hpl)$ and solves equation (\ref{eq:system}), with $u_{0i\eps}$, $c_{i\eps}$ and $a_{ij\eps}$ in place of $u_{0i}$, $c_i$, $a_{ij}$. Integrating along characteristic curves, we may rewrite (\ref{eq:system}) as the system of integral equations
\[
     u_{i\eps}(t,x) = u_{0i\eps}(\gamma_{i\eps}(t,x,0))
          + \int_0^t \sum_{j=1}^m a_{ij\eps} u_{j\eps}(\tau,\gamma_{i\eps}(t,x,\tau))\,d\tau.
\]
Fix $T > 0$ and a compact interval $K\subset\R$ and let $K_T$ be the region bounded by $t=0$, $t=T$ and by the straight lines with slope $\sup c_{i1\eps}$ (left) and $\inf c_{i0\eps}$ (right), determined according to condition (A9). If a point $(t,x)$ belongs to $K_T$, all backward characteristic curves of the system connect it to a point in $K$. Thus we can take the $L^\infty$-norms on both sides of the system of integral equations on $K_T$. Gronwall's inequality and the logarithmic type of $a_{ij\eps}$ show that the $L^\infty$-norm of $u_{i\eps}$ on $K_T$ is moderate, $i = 1,\ldots, m$.

Next, $\p_tu_{i\eps}$ satisfies the equation
\begin{equation}\label{eq:dtsystem}
\begin{array}{rcl}
  (\p_t + c_{i\eps}(x)\p_x)\p_tu_{i\eps}(t,x)& = &\sum_{j=1}^m a_{ij\eps}(t,x)\p_tu_{j\eps}(t,x) \vspace{4pt} \\
        & + & \sum_{j=1}^m \p_t a_{ij\eps}(t,x)u_{j\eps}(t,x), \vspace{4pt} \\
   \p_t u_{i\eps}(0,x) & = & \sum_{j=1}^m a_{ij\eps}(0,x)u_{0j\eps}(x) - c_{i\eps}(x)u_{0i\eps}'(x).
\end{array}
\end{equation}
The initial data are still moderate; the second summand in the differential equation is known to be moderate from the previous step, and so the Gronwall argument can be applied to $\p_t u_{i\eps}$ to obtain its moderateness. Finally, equation (\ref{eq:system}) and the lower bound on $c_{i\eps}(x)$, due to assumption (A9), entail the moderateness of $\p_x u_{i\eps}$.

Differentiating the equation again, the moderateness of $\p_t^2 u_{i\eps}$ is obtained in the same way. The previous equation (\ref{eq:dtsystem}) now entails the moderateness of $\p_x\p_t u_{i\eps}$. Next, we differentiate the system (\ref{eq:system}) with respect to $x$ to obtain
\begin{equation}\label{eq:dxsystem}
\begin{array}{rcl}
  (\p_t + c_{i\eps}(x)\p_x)\p_xu_{i\eps}(t,x) & = &\sum_{j=1}^m a_{ij\eps}(t,x)\p_x u_{j\eps}(t,x) - c_{i\eps}'(x)\p_x u_{i\eps}(t,x)  \vspace{4pt} \\
         & + & \sum_{j=1}^m \p_x a_{ij\eps}(t,x)u_{j\eps}(t,x), \vspace{4pt} \\
   \p_x u_{i\eps}(0,x) & = & u_{0i\eps}'(x).
\end{array}
\end{equation}
At this stage, all terms in the equation (\ref{eq:dxsystem}) have already been proven to be moderate, except $c_{i\eps}(x)\p_x^2u_{i\eps}(t,x)$, which hence must be moderate as well. Using the lower bound on $c_{i\eps}(x)$ we arrive at the moderateness of $\p_x^2u_{i\eps}(t,x)$.

Next one proceeds step by step: first $\p_t^3 u_{i\eps}$ is estimated by differentiating the system of equations once more and using the Gronwall argument, then $\p_t^2\p_x u_{i\eps}$ by collecting terms in (\ref{eq:dtsystem}), differentiated with respect to $t$, then $\p_t\p_x^2 u_{i\eps}$ is shown to be moderate by collecting terms in (\ref{eq:dxsystem}), differentiated with respect to $t$, and finally $\p_x^3 u_{i\eps}$ by differentiating (\ref{eq:dxsystem}) with respect to $x$, and so on.

Uniqueness is proven along the same lines.
\end{proof}

Note that the argument depends essentially on the fact that the coefficients $C_i$ depend on one variable only. If the $C_i$ depended on both variables, the recursive procedure could not be initiated without logarithmic assumptions on the first derivatives of $C_i$.

\subsection{Singularities due to non-compatibility}

If the coefficients of a hyperbolic equation have a jump discontinuity at some point $x_0$, a singularity emanating from $(0, x_0)$ will be produced, in general. This happens even for smooth initial data, unless suitable compatibility conditions at the point $x_0$ are satisfied. This section serves to exhibit this phenomenon in a simple example and to emphasize that it may arise in general. In the subsequent sections we will not further elaborate on this effect and usually consider only data which actually vanish near the jumps of the coefficients.

For the purpose of illustration, consider the equation
\begin{equation}\label{eq:transportjump}
\begin{array}{l}
  (\p_t + c(x)\p_x)u(t,x) = 0,  \vspace{4pt} \\
   u(0,x) = u_0(x)
\end{array}
\end{equation}
where $c(x)$ is piecewise constant with a jump at $x=0$. The purpose of this subsection is to study the singularity emanating from the origin, both in the classical case and in the Colombeau framework, and point out compatibility conditions of the jump data with the initial data. In particular, we want to demonstrate in a simple example that the Colombeau solution is not $\Ginf$-regular along the characteristic curve emanating from the origin, in general.

For the sake of exposition, assume that
\[
   c(x) = \left\{
          \begin{array}{ll}
          1,& x < 0,\\
          2,& x > 0.
          \end{array}\right.
\]
\begin{figure}[htbp]
\begin{center}
\includegraphics[width = 7cm]{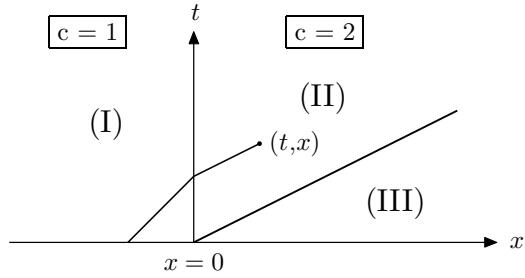}
\end{center}
\caption{Geometry of the transmission problem.}
\label{fig:CornerSingularity}
\end{figure}

We first recall the classical arguments, interpreting problem classically as a transmission problem. The standard transmission condition is that $u(t,x)$ should be continuous across $x=0$. Observe that in Figure\ref{fig:CornerSingularity} the backward characteristic through $(t,x)$ hits the $t$-axis at time $t-x/2$. Thus the solution of the transmission problem is
\[
   u(t,x) = \left\{
          \begin{array}{ll}
          u_0(x-t)& \mbox{in\  (I)},\\
           u_0(-t + x/2)& \mbox{in\ (II)},\\
            u_0(x-2t)& \mbox{in\ (III)}.
          \end{array}\right.
\]
Regions (II) and (III) are separated by the line $x=2t$. It is seen that the solution is continuous across this line.
Next,
\[
   \p_xu(t,x) = \left\{
          \begin{array}{ll}
          u_0'(x-t)& \mbox{in\  (I)},\\
           \frac12u_0'(-t + x/2)& \mbox{in\ (II)},\\
            u_0'(x-2t)& \mbox{in\ (III)}.
          \end{array}\right.
\]
Thus $\p_xu(t,x)$ is continuous across $x = 2t$ if and only if $u_0'(0) = 0$. It is never continuous across $x=0$, unless $u_0'$ vanishes on the negative half-axis.

Next, we want to show that the same phenomenon is observed in the Colombeau setting. We regularize the coefficient $c(x)$ by means of a nonnegative, standard mollifier $\varphi_\eps$ as in Example\;1%\ref{ex : delta}
, i.e., $\varphi$ is smooth, nonnegative, has integral one, is symmetric, and its support is contained in $[-1,1]$, $\varphi_\eps(x) = \varphi(x/\eps)/\eps$. Let $C$ be the class of $(c\ast\varphi_\eps)_{\eps\in (0,1]}$ in $\cG(\R)$. Given initial data $U_0\in\cG(\R)$, problem (\ref{eq:transportjump}) has a unique solution $U\in\cG(\hpl)$. Assume the initial data belong to $\Ginf(\R)$. We want to show that the singularities across $x=0$ and $x = 2t$ can be detected, i.e., that $U$ is not $\Ginf$ across these lines, in general.

For the sake of the argument, we take $U_0$ as the $\Cinf$-function $u_0(x) = x$ and
\[
   c_\eps(x) = 1 + \int_{-\infty}^x \varphi_\eps(y)\,dy = 1 + \int_{-\infty}^{x/\eps} \varphi(y)\,dy.
\]
Recall from the Subsection\;\ref{subsec:transport} that the characteristic curves are given by
\[
   \gamma_\eps(t,x,\tau) = x(\tau) = C_\eps^{-1}\big(C_\eps(x) +\tau -t\big).
\]
Due to the simple initial data we use, the solution is given by
\[
   u_\eps(t,x) = \gamma_\eps(t,x,0).
\]
Recall further that
\[
    \p_x\gamma_\eps(t,x,0) = \frac{c_\eps(\gamma_\eps(t,x,0))}{c_\eps(x)},\qquad
        \p_t \gamma_\eps(t,x,0) = -c_\eps(\gamma_\eps(t,x,0)).
\]
We compute
\[
    \p_x^2\gamma_{\eps}(t,x,0) =  -\frac{c_{\eps}'(x)c_{\eps}(\gamma_{\eps}(t,x,0))}{c_{\eps}(x)^2}
          + \frac{c_{\eps}'(\gamma_{\eps}(t,x,0))\p_x\gamma_{\eps}(t,x,0)}{c_{\eps}(x)}.
\]
For $t>0$ and $x$ near $0$, we have that
\[
   c_\eps(\gamma_\eps(t,x,0)) \equiv 1,\qquad c_\eps '(\gamma_\eps(t,x,0)) \equiv 0.
\]
Thus for those points we obtain
\[
   \p_x\gamma_\eps(t,x,0) = \frac{1}{c_\eps(x)},\quad \p_x^2\gamma_\eps(t,x,0) =  -\frac{c_\eps'(x)}{c_\eps(x)^2},
\]\[
       \quad \p_x^3\gamma_\eps(t,x,0) = \frac{2(c_\eps'(x))^2}{c_\eps(x)^3} -\frac{c_\eps''(x)}{c_\eps(x)^2}.
\]
Due to the symmetry assumption on $\varphi$ we have that $c_\eps(0) = \frac32$. Further, $c_\eps'(x) = \frac1\eps\varphi(\frac{x}\eps)$, $c_\eps''(x) = \frac1{\eps^2}\varphi'(\frac{x}\eps)$, and so on. Thus if we take $\varphi(0) = a \neq 0$ and $\varphi'(0) = \varphi''(0) = \ldots = 0$, we obtain
\[
   \p_x\gamma_\eps(t,0,0) = \frac23, \quad \p_x^2\gamma_\eps(t,0,0) = -\frac{4a}{9\eps},\quad
    \p_x^3\gamma_\eps(t,0,0) = \frac{16a^2}{27\eps^2}
\]
and so on. Thus $u_\eps$ does not have the $\Ginf$-property near $x=0$.

To prove that the solution is not $\Ginf$ across $x = 2t$, we consider a point $(t,x)$ near this line.
We have
\[
  \p_t\gamma_\eps(t,x,0) = -c_\eps\big(\gamma_\eps(t,x,0)\big),\quad
     \p_t^2\gamma_\eps(t,x,0) = c_\eps'\big(\gamma_\eps(t,x,0)\big)c_\eps\big(\gamma_\eps(t,x,0)\big),
\]
\[
  \p_t^3\gamma_\eps(t,x,0) = -\big(c_\eps'\big(\gamma_\eps(t,x,0)\big)\big)^2c_\eps\big(\gamma_\eps(t,x,0)\big)
     - c_\eps''\big(\gamma_\eps(t,x,0)\big)\big(c_\eps\big(\gamma_\eps(t,x,0)\big)\big)^2
\]
and so on. Consider a point
\[
   x = \gamma_\eps(0,0,t)
\]
which lies exactly on the characteristic curve emanating from the origin. We have
\[
  \gamma_\eps(t,\gamma_\eps(0,0,t),0) = 0,
\]
thus along this characteristic curve,
\[
  \p_t\gamma_\eps = -c_\eps(0),\quad
     \p_t^2\gamma_\eps = c_\eps'(0)c_\eps(0), \quad
  \p_t^3\gamma_\eps = -\big(c_\eps'(0)\big)^2c_\eps(0)
      - c_\eps''(0)\big(c_\eps(0)\big)^2.
\]
As before, this expression is seen to violate the $\Ginf$-property. As $\eps\downarrow 0$, the curve $x = \gamma_\eps(0,0,t)$ converges to the line $x = 2t$. Hence in every neighborhood of a point on $x=2t$, the $\Ginf$-property is violated, and so $u_\eps$ is not $\Ginf$ across $x = 2t$.

%%%%%%%%%%%%%%%%%%%%%%%%%%%%%%%%%%%%%%%%%%%%%%%%%%%%%%%%%%%%%%%%%%%%%%%%%%%%%%%%%%%%%%%%%%%%%%%%%%%%%%%%%%%%%%%%%%%%%%%%%%%%%%%

\section{Propagation of singularities in wave equations}
\label{sec : 4}

We study the phenomenon of propagation of singularities in the generalized solution to the Cauchy problems for the one-dimensional wave equation with propagation speed depending on $x$
\begin{equation}\label{eq : wave with DC in x}
	\begin{array}{l}
		\partial_t^2u - c(x)^2\partial_x^2u = 0,  \quad t\in[0,T], \ x \in \mathbb{R}, \vspace{4pt}\\
		u|_{t = 0} = u_{0},\quad \partial_tu|_{t = 0} = u_{1}, \quad x \in \mathbb{R}
	\end{array}
\end{equation}
and for the wave equation in any space dimension with $t$-depending propagation speed
\begin{equation}\label{eq : wave with DC in t}
	\begin{array}{l}
		\partial_t^2u - c(t)^2\Delta u = 0,  \quad t\in[0,T], \ x \in \mathbb{R}^d, \vspace{4pt}\\
		u|_{t = 0} = u_{0},\quad \partial_tu|_{t = 0} = u_{1}, \quad x \in \mathbb{R}^d.
	\end{array}
\end{equation}
In either case, the coefficient $c$ is assumed to be strictly positive and piecewise constant. We work on a finite time interval $[0,T)$, but with arbitrary $T>0$. (See the comments at the beginning of Subsection\;\ref{subsec : 3.1}.)

As outlined in the Introduction, the issue is to find upper and lower bounds on the $\Ginf$-singular support, that is, to present methods that admit the detection of singularities in the generalized solution from its asymptotic behavior in $\eps$.

\subsection{The space-dependent case}\label{subsec : 4.1}

We begin with problem $(\ref{eq : wave with DC in x})$. The coefficient $c(x)$ is assumed to be given by
\[
	c(x) = c_0 + (c_1 - c_0)H(x),
\]
where $c_0$, $c_1 > 0$, $c_0 \ne c_1$ and $H$ is the Heaviside function. Specifically, we take
\[
u_0 \equiv 0,\qquad u_1 = \delta(x+1),
\]
where $\delta$ is the delta function. We define a generalized function $C \in \cG_{\infty,2}(\mathbb{R})$ by means of a representative $(c_{\varepsilon})_{\varepsilon \in (0,1]} = (c \ast \varphi_{\varepsilon})_{\varepsilon \in (0,1]}$ with a mollifier $\varphi_{\varepsilon}$ as in Example 1%\ref{ex : delta}
. Then $\singsupp_{\Ginf} C = \{x=0\}$. We also produce $U_0 \equiv 0$ and $U_1 \in \G_{2,2}(\mathbb{R})$ as the class of $(\varphi_{\varepsilon}(x+1))_{\varepsilon \in (0,1]}$, where $\varphi_{\varepsilon}$ again is a mollifier as in Example 1%\ref{ex : delta}
. (Actually, the mollifier need not be the same as the one chosen for the regularization of the coefficient $c$.) Thus we interpret problem $(\ref{eq : wave with DC in x})$ as the problem
\begin{equation}\label{eq : generalized wave with DC in x}
	\begin{array}{lr}
		\partial_t^2U - C^2\partial_x^2U = 0 & \mbox{in}\ \G_{2,2}([0,T]\times\mathbb{R}),\vspace{4pt} \\
		U|_{t = 0} = U_0,\quad \partial_tU|_{t = 0} = U_1 & \mbox{in}\ \G_{2,2}(\mathbb{R})
	\end{array}
\end{equation}
with $C$, $U_0$ and $U_1$ defined above. By Theorem\;\ref{thm:nonconswave}, problem $(\ref{eq : generalized wave with DC in x})$ has a unique solution $U \in \G_{2,2}([0,T] \times \mathbb{R})$ for any $T > 0$. We are going to show that, if the coefficient has a small jump, then reflection and refraction of the singularity actually do occur at the point of discontinuity of the coefficient  (see Figure \ref{fig : singsupp of U}).

\begin{figure}[htbp]
\begin{center}
\includegraphics{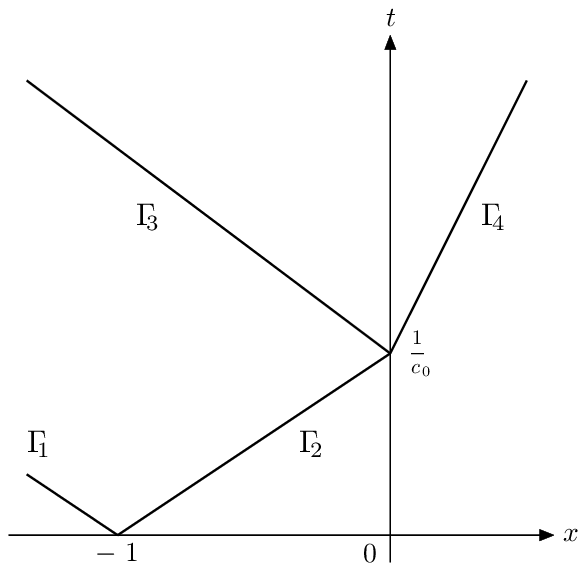}
\end{center}
\caption{The $\Ginf$-singular support of the solution $U$.}
\label{fig : singsupp of U}
\end{figure}

\begin{theorem}\label{thm : propagation1}
Let $c_0$, $c_1 > 0$ be such that $2 < \sqrt{c_0/c_1} + \sqrt{c_1/c_0} < 4$ and let $T > 1/c_0$. Furthermore,  let $C$, $U_0$ and $U_1$ be as described above and let $U \in \G_{2,2}([0,T] \times \mathbb{R})$ be the solution to problem $(\ref{eq : generalized wave with DC in x})$. Then it holds that
\begin{equation}\label{eq : singsupp of U1}
	\begin{split}
		\singsupp_{\Ginf} U
		& = \{(t,x) \mid x=-1-c_0 t,\ 0 \le t < T\} \\
		&\quad \cup \{(t,x) \mid x=-1+c_0 t,\ 0 \le t \le 1/c_0\} \\
		&\quad \cup \{(t,x) \mid x=1-c_0 t,\ 1/c_0 \le t < T\} \\
		&\quad \cup \{(t,x) \mid x=-c_1/c_0+c_1t,\ 1/c_0 \le t < T\} \\
		& =:  \Gamma_1 \cup \Gamma_2 \cup \Gamma_3 \cup \Gamma_4.
	\end{split}
\end{equation}
\end{theorem}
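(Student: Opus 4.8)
The plan is to fix a convenient representative $(u_\eps)_{\eps\in(0,1]}$ of $U$ (existence and uniqueness guaranteed by Theorem~\ref{thm:nonconswave}) and to follow its singular behaviour after diagonalising the equation into Riemann invariants. Setting $r_\eps^{\pm}=\p_t u_\eps\pm c_\eps\,\p_x u_\eps$ turns the equation into the coupled first-order system $(\p_t\mp c_\eps\p_x)r_\eps^{\pm}=-\tfrac12 c_\eps'\,(r_\eps^{+}-r_\eps^{-})$, in which $r_\eps^{-}$ is the right-moving (incident, then transmitted) invariant and $r_\eps^{+}$ the left-moving (reflected) one. The coupling $c_\eps'$ lives in the transition layer of width of order $\eps$ around $\{x=0\}$ and is of size $\eps^{-1}$, so it is \emph{not} of logarithmic type and Theorem~\ref{thm:systemG} does not apply directly; the structural fact that saves the analysis is that its integral across the layer is the \emph{bounded} quantity $\int c_\eps'=c_1-c_0$, so the total coupling felt along a characteristic crossing $\{x=0\}$ is $O(1)$.

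Before the incident front reaches the interface, that is on $\{t<1/c_0\}$ localised away from $x=0$, the coefficient is the constant $c_0$ and $u_\eps$ is given by the d'Alembert formula with datum $\varphi_\eps(\cdot+1)$: a mollified step of height $1/(2c_0)$ whose two fronts travel along $\Gamma_1$ and $\Gamma_2$. There $\p_x^k u_\eps$ is of size $\eps^{-k}$ on the fronts and $u_\eps$ is locally constant (hence $\Ginf$) off them, which settles both bounds on $\Gamma_1,\Gamma_2$; the left-moving front along $\Gamma_1$ never meets the interface and retains this behaviour for all $t<T$.

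The crux is the interaction around $(1/c_0,0)$, where $\Gamma_2$ meets the interface. Here I would rescale $x=\eps\xi$, $t=1/c_0+\eps\tau$; since $c$ is homogeneous of degree zero, $c_\eps(\eps\xi)=(c*\varphi)(\xi)=:\gamma(\xi)$ is $\eps$-independent and the equation becomes the fixed \emph{inner} scattering problem $\p_\tau^2 w-\gamma(\xi)^2\p_\xi^2 w=0$, with $\gamma(-\infty)=c_0$, $\gamma(+\infty)=c_1$, and incoming data the right-moving step furnished by the outer solution. Solving this inner problem — equivalently, iterating the integral form of the Riemann system, where each scattering event contributes the bounded factor governed by $\int c_\eps'$ — produces an outgoing reflected step of amplitude proportional to $R=\tfrac{c_1-c_0}{c_0+c_1}$ along $\Gamma_3$ and a transmitted step of amplitude proportional to $T=\tfrac{2c_1}{c_0+c_1}$ along $\Gamma_4$; matching the inner expansion to the outer waves pins down the phases $x=1-c_0t$ and $x=-c_1/c_0+c_1t$. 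The hypothesis $2<\sqrt{c_0/c_1}+\sqrt{c_1/c_0}<4$ enters precisely here: the lower inequality is equivalent (by arithmetic–geometric mean) to $c_0\neq c_1$, i.e.\ to $R\neq0$, so a reflected singularity is genuinely created, while the upper inequality is the quantitative smallness of the jump that makes the iteration converge and lets the leading amplitudes dominate the remainder.

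Finally I would propagate the reflected and transmitted steps through the constant-coefficient regions $\{x<0\}$ and $\{x>0\}$: away from the layer the coefficient is $\Ginf$ and regularity propagates along the straight characteristics, so the only possible singular loci are the characteristics carrying the initial datum and those emanating from $(1/c_0,0)$, giving the upper bound $\singsupp_{\Ginf}U\subseteq\Gamma_1\cup\Gamma_2\cup\Gamma_3\cup\Gamma_4$. For the lower bound one checks, as on $\Gamma_1,\Gamma_2$, that along each of $\Gamma_3,\Gamma_4$ the solution is a mollified step of the nonzero amplitude just computed, whence $\p_x^k u_\eps$ is of size $\eps^{-k}$ and the $\Ginf$-property fails; uniqueness in $\Gtwo$ then delivers the claimed equality. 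The main obstacle is the interaction analysis of the third paragraph: one must control the boundary-layer dynamics driven by the $\eps^{-1}$-sized coupling $c_\eps'$, show it produces no spurious singularities spilling off the four rays (the delicate half of the upper bound), and simultaneously extract the precise, non-negligible reflected and transmitted amplitudes — the step from ``the associated distribution is singular'' to ``the $\Ginf$-singular support is nonempty'' being exactly what is \emph{not} automatic in this setting.
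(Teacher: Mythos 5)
Your reduction to the Riemann--invariant system $V=\p_t U-C\p_x U$, $W=\p_t U+C\p_x U$ is exactly the paper's starting point (system (\ref{eq : system1})), the treatment away from the interface by d'Alembert and constant-coefficient propagation is the same, and your reading of the hypothesis is correct in spirit (the lower inequality is just $c_0\neq c_1$; the upper one is a smallness condition on the jump). But there is a genuine gap, and it is precisely the step you yourself flag as ``the main obstacle'': the interaction analysis at $(1/c_0,0)$ is asserted, not proved. The inner rescaling $x=\eps\xi$, $t=1/c_0+\eps\tau$ does yield an $\eps$-independent scattering problem, but that alone settles nothing: a fixed point of the wedge $D$ between $\Gamma_3$ and $\Gamma_4$ has inner time $\tau\sim(t_0-1/c_0)/\eps\to\infty$, so to conclude $\Ginf$-regularity (in fact negligibility of the invariants) in $D$ you must show that the inner solution's gradient, and all of its derivatives, decay \emph{faster than any polynomial} --- in effect exponentially --- as $\tau\to\infty$ in the directions strictly between the two outgoing characteristics; only then does evaluation at $\tau\sim 1/\eps$ give the $O(\eps^N)$-for-every-$N$ bounds that membership in the ideal requires. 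The observation that $\int c_\eps'=c_1-c_0$ is bounded does not deliver this: each crossing of the layer contributes an $O(1)$ scattering factor, with no smallness per step, so the multiple-scattering iteration does not by itself converge to a negligible remainder. The same missing analysis underlies your lower bound: you would need a representative-level description of $u_\eps$ near $\Gamma_3,\Gamma_4$ as a mollified step of nonzero amplitude plus a \emph{negligible} error, and, as you yourself note, no associated-distribution computation can substitute for it.

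The paper closes this gap by a mechanism your sketch does not contain. For the upper bound it takes $x$ as the evolution variable across the layer $[-\eps,\eps]$, splits the trace of the relevant invariant into positive and negative parts, and proves the two-sided estimates (\ref{eq : 2})--(\ref{eq : 3}), which combine into the contraction (\ref{eq : 4}): the sup-norm of $\widetilde{w}_\eps$ on consecutive interface time-intervals $I_k$ of length $O(\eps)$ decreases by the factor $\tfrac14\bigl(\sqrt{c_0/c_1}+\sqrt{c_1/c_0}\bigr)$ per step. The hypothesis $\sqrt{c_0/c_1}+\sqrt{c_1/c_0}<4$ is exactly what makes this factor $<1$, and since $\sim(t_0-1/c_0)/\eps$ such intervals fit below any fixed $t_0>1/c_0$, iteration gives $O(\eps^N)$ for every $N$, i.e.\ $(V_2,W_2)=0$ in $D$. (This elementary contraction replaces the exponential local-decay/scattering theory your inner-problem route would require, at the price of the constant $4$.) For the lower bound the paper computes no amplitudes at all: it swaps the roles of $t$ and $x$, regards $(U|_{x=-1},\p_x U|_{x=-1})$ as Cauchy data for a problem whose coefficient jumps in ``time'', and uses the upper bound together with the propagation theory for time-dependent jumps (Theorem 4.1 of \cite{DHO:2013} and Proposition \ref{prop : regularity}) to force singularities on $\Gamma_3$ and $\Gamma_4$ via a cancellation argument on the auxiliary ray $\Gamma_5$. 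So: correct skeleton and correct heuristics about the hypothesis, but both halves of the actual proof --- negligibility in $D$ and non-negligibility on $\Gamma_3\cup\Gamma_4$ --- are missing from your proposal.
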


\begin{proof}
Assertion $(\ref{eq : singsupp of U1})$ will hold if we show that
\begin{equation}\label{eq : singsupp of U2}
	\singsupp_{\Ginf} U \subset \cup_{k=1}^{4} \Gamma_k.
\end{equation}
To see this, for the sake of simplicity, we assume that $T > 2/c_0$. We change the roles of $t$ and $x$ and consider $(U|_{x=-1},\partial_x U|_{x=-1})$ as initial data. By $(\ref{eq : singsupp of U2})$, the initial data $(U|_{x=-1},\partial_x U|_{x=-1})$ can have singularities only at $t=0$ and $t=2/c_0$. We observe how their singularities propagate as $x$ goes from $-1$ to $-\infty$ and from $-1$ to $\infty$. By d'Alembert's formula, $U$ has a representative $((1/2c_0)\int_{x+1-c_0t}^{x+1+c_0t}\varphi_{\varepsilon}(y)\,dy)_{\varepsilon \in (0,1]}$ near $(t,x) = (0,-1)$. Hence $\singsupp_{\Ginf} U|_{x=-1} \supset \{0\}$ and $\partial_x U|_{x=-1} = 0$ near $t = 0$. Let us focus on the singularity of $U|_{x=-1}$ at $t = 0$. By d'Alembert's formula again, it propagates along $\Gamma_1$ as $x$ goes from $-1$ to $-\infty$. As $x$ goes from $-1$ to $\infty$, it propagates along $\Gamma_2$ and then splits into a transmitted and a refracted wave at $x=0$. This can be shown similarly to the proof of Theorem 4.1 of \cite{DHO:2013}. Thus the singularity of $U|_{x=-1}$ at $t = 0$ propagates along $\Gamma_1$, $\Gamma_2$, $\Gamma_4$ and
\[
	\Gamma_5:=\{(t,x) \mid x=c_1/c_0- c_1t,\ 0\le t\le 1/c_0\}.
\]
Note that $\Gamma_5$ does not belong to $\singsupp_{\Ginf} U$. Hence the initial data $U|_{x=-1}$, $\partial_xU|_{x=-1}$ must have a singularity at $t=2/c_0$ which cancels out the singularity on $\Gamma_5$ emanating from $U|_{x=-1}$ at $t = 0$. This singularity at $t=2/c_0$ propagates along at least $\Gamma_3$ and $\Gamma_5$. It also may propagate along $\Gamma_4$. Even if that is the case, it does not cancel out the singularity on $\Gamma_4$ emanating from $U|_{x=-1}$ at $t = 0$, i.e., $\Gamma_4$ belongs to $\singsupp_{\Ginf} U$. In fact, if $\Gamma_4$ does not belong to $\singsupp_{\Ginf} U$, then $\singsupp_{\Ginf} U$ coincides with the union of $\Gamma_1$, $\Gamma_2$ and $\Gamma_3$. However, by Proposition \ref{prop : regularity} below, this is not possible. Thus $(\ref{eq : singsupp of U2})$ implies assertion $(\ref{eq : singsupp of U1})$.

We next prove that $(\ref{eq : singsupp of U2})$ holds. Let $D$ be the region bounded by $\Gamma_3$ and $\Gamma_4$, i.e.,
\[
	D:= \{(t,x) \mid 1-c_0 t < x < -c_1/c_0+c_1t,\ 1/c_0 < t < T\}.
\]
Indeed, $U$ is zero to the left of $\Gamma_1$ and to the right of $\Gamma_2\cup\Gamma_4$, and constant in the region enclosed by $\Gamma_1, \Gamma_2, \Gamma_3$. Therefore,
\[
	\singsupp_{\Ginf} U \subset  \subset \bigl(\cup_{k=1}^{4}\Gamma_k\bigr) \cup D.
\]
Thus it remains to show that $U$ is $\Ginf$-regular in $D$. Put $V= \partial_t U - C\partial_xU$ and $W= \partial_tU + C\partial_xU$. Then problem $(\ref{eq : generalized wave with DC in x})$ can be rewritten as the Cauchy problem for a first-order hyperbolic system
\begin{equation}\label{eq : system1}
	\begin{array}{rclcl}
		(\partial_t + C\partial_x)V &=& C^{\prime}(V-W)/2 &\ & \mbox{in}\ \G_{2,2}([0,T]\times\mathbb{R}),\\[2pt]
		(\partial_t - C\partial_x)W &=& C^{\prime}(V-W)/2 && \mbox{in}\ \G_{2,2}([0,T]\times\mathbb{R}),\\[2pt]
		V|_{t=0}\ =\ V_0 &=& U_1 && \mbox{in}\ \G_{2,2}(\mathbb{R}),\\[2pt]
      		W|_{t=0}\ =\ W_0&=& U_1 && \mbox{in}\ \G_{2,2}(\mathbb{R}).
	\end{array}
\end{equation}
We split the initial data $(U_1,U_1)$ into two parts,
\[
	(U_1,U_1) = (0,U_1) + (U_1,0),
\]
and let $(V_1,W_1)$ and $(V_2,W_2)$ be the solutions to problem $(\ref{eq : system1})$ with initial data $(0,U_1)$ and $(U_1,0)$, respectively. Obviously $(V_1,W_1) = (0,0)$ in $D$. Hence, if we show that $(V_2,W_2) = 0$ in $D$, the solution $U$ will be $\Ginf$-regular in $D$. To do this, we first consider the case $c_0 > c_1$. The solution $(V_2,W_2)$ has a representative $(v_{2,\varepsilon},w_{2,\varepsilon})_{\varepsilon \in (0,1]}$, which satisfies the Cauchy problem
\[
	\begin{array}{rclcl}
		(\partial_t + c_{\varepsilon}(x)\partial_x)v_{2,\varepsilon} &=& c_{\varepsilon}^{\prime}(x)(v_{2,\varepsilon}-w_{2,\varepsilon})/2, &\ &0 < t < T,\ x \in \mathbb{R},\\[2pt]
		(\partial_t - c_{\varepsilon}(x)\partial_x)w_{2,\varepsilon} &=& c_{\varepsilon}^{\prime}(x)(v_{2,\varepsilon}-w_{2,\varepsilon})/2, &  &0 < t < T,\ x \in \mathbb{R},\\[2pt]			 
		v_{2,\varepsilon}|_{t = 0}	& = & \varphi_{\varepsilon}(x+1),&  & x \in \mathbb{R},\\[2pt]
		w_{2,\varepsilon}|_{t = 0} & =& 0, &  & x \in \mathbb{R}.
	\end{array}
\]
Consider the characteristic curves $\gamma_{\varepsilon}^{+}(t,x,\tau)$ and $\gamma_{\varepsilon}^{-}(t,x,\tau)$ passing through $(t,x)$ at time $\tau = t$, which are the solutions of the equations
\begin{align*}
	& \partial_{\tau}\gamma_{\varepsilon}^{+}(t,x,\tau) = c_{\varepsilon}(\gamma_{\varepsilon}^{+}(t,x,\tau)), \qquad \gamma_{\varepsilon}^{+}(t,x,t) = x, \\
	& \partial_{\tau}\gamma_{\varepsilon}^{-}(t,x,\tau) = -c_{\varepsilon}(\gamma_{\varepsilon}^{-}(t,x,\tau)), \qquad \gamma_{\varepsilon}^{-}(t,x,t) = x.
\end{align*}
We can check that
\[
	\begin{split}
		\supp v_{2,\varepsilon}
		& \subset \bigl\{(t,x) \mid 0 \le t < T, \\
		& \qquad\ \min\{-\varepsilon,\gamma_{\varepsilon}^{+}(0,-1-\varepsilon,t)\} \le x \le \gamma_{\varepsilon}^{+}(0,-1+\varepsilon,t)\bigr\}, \\
		\supp w_{2,\varepsilon}
		& \subset \bigl\{(t,x) \mid 0 \le t < T, \\
		& \qquad\ \gamma_{\varepsilon}^{-}((1-2\varepsilon)/c_0,-\varepsilon,t) \le x \le \min\{\varepsilon, \gamma_{\varepsilon}^{+}(0,-1+\varepsilon,t)\}\bigr\}.
	\end{split}
\]
We now put $b_{\varepsilon} = 1/c_{\varepsilon}$ and $(\widetilde{v}_{\varepsilon},\widetilde{w}_{\varepsilon}) = (-b_{\varepsilon}v_{2,\varepsilon}, b_{\varepsilon}w_{2,\varepsilon})$, and change the roles of $t$ and $x$.
Then $(\widetilde{v}_{\varepsilon},\widetilde{w}_{\varepsilon})$ satisfies the systems
\begin{equation}\label{eq : representative2}
	\begin{array}{rclcl}
		(\partial_x + b_{\varepsilon}(x)\partial_t)\widetilde{v}_{\varepsilon} &=& n_{\varepsilon}(x)(\widetilde{v}_{\varepsilon}-\widetilde{w}_{\varepsilon}), &\ &0 < t < T,\ x \in \mathbb{R},\\[2pt]
		(\partial_x-b_{\varepsilon}(x)\partial_t) \widetilde{w}_{\varepsilon} &=& n_{\varepsilon}(x)(\widetilde{w}_{\varepsilon}-\widetilde{v}_{\varepsilon}), &  &0 < t < T,\ x \in \mathbb{R},\\[2pt]			
		\widetilde{v}_{\varepsilon}|_{x = -\varepsilon} & = & 0, &  & 1/c_0 \le t < T,\\[2pt]
		\widetilde{w}_{\varepsilon}|_{x = \varepsilon} & =& 0, &  & t_{\varepsilon} \le t < T,
	\end{array}
\end{equation}
where
\begin{equation}\label{eq:beps}
n_{\varepsilon}(y) = \frac{b_{\varepsilon}^{\prime}(y)}{2b_{\varepsilon}(y)} = \frac{d}{d y}\log b_\eps(y)
\end{equation}
and $t_{\varepsilon}$ is such that $\gamma_{\varepsilon}^{+}(0,-1+\varepsilon,t_{\varepsilon}) = \varepsilon$. The geometry can be read off from Figure\;\ref{fig:Geometry}.
\begin{figure}[htbp]
\begin{center}
\includegraphics{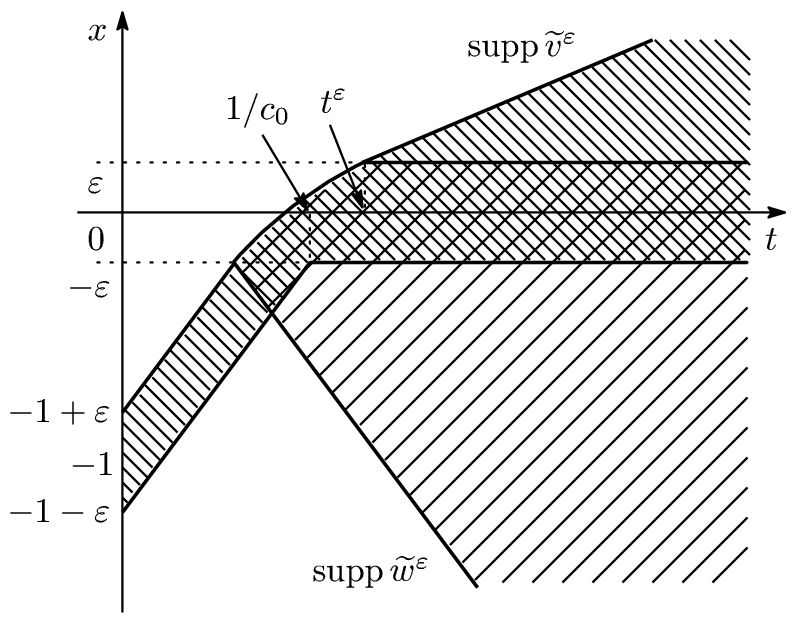}
\end{center}
\caption{Geometry of the supports of the solution to (\ref{eq : representative2}). Note the interchange of the roles of $x$ and $t$.}
\label{fig:Geometry}
\end{figure}

The characteristic curves $\eta_{\varepsilon}^{+}(t,x,\xi)$ and $\eta_{\varepsilon}^{-}(t,x,\xi)$ passing through $(t,x)$ at $\xi = x$ are the solutions of the equations
\begin{align*}
	& \partial_{\xi}\eta_{\varepsilon}^{+}(t,x,\xi) = b_{\varepsilon}(\xi), \qquad \eta_{\varepsilon}^{+}(t,x,x) = t, \\
	& \partial_{\xi}\eta_{\varepsilon}^{-}(t,x,\xi) = -b_{\varepsilon}(\xi), \qquad \eta_{\varepsilon}^{-}(t,x,x) = t.
\end{align*}
Along these characteristic curves, $\widetilde{v}_{\varepsilon}$ and $\widetilde{w}_{\varepsilon}$ are respectively calculated as
\begin{equation}\label{eq : integral eq1}
	\begin{split}
	\widetilde{v}_{\varepsilon}(t,x)
	& = - \int_{-\varepsilon}^{x}n_{\varepsilon}(\xi)\widetilde{w}_{\varepsilon}(\eta_{\varepsilon}^{+}(t,x,\xi),\xi) \exp\left(-\int_{-\varepsilon}^{\xi}n_{\varepsilon}(y)\,dy\right)d\xi \\
	& \qquad \cdot \exp\left(\int_{-\varepsilon}^{x}n_{\varepsilon}(y)\,dy\right),\\
	\widetilde{w}_{\varepsilon}(t,x)
	& = \widetilde{w}_{\varepsilon}(\eta_{\varepsilon}^{-}(t,x,-\varepsilon),-\varepsilon) \exp\left(\int_{-\varepsilon}^{x}n_{\varepsilon}(y)\,dy\right) \\
	& \quad - \int_{-\varepsilon}^{x}n_{\varepsilon}(\xi)\widetilde{v}_{\varepsilon}(\eta_{\varepsilon}^{-}(t,x,\xi),\xi) \exp\left(-\int_{-\varepsilon}^{\xi}n_{\varepsilon}(y)\,dy\right)d\xi \\
	& \qquad \cdot \exp\left(\int_{-\varepsilon}^{x}n_{\varepsilon}(y)\,dy\right)
	\end{split}
\end{equation}
for $0 \le t < T$ and $x \in \mathbb{R}$ such that $\eta^{+}_{\varepsilon}(t,x,-\varepsilon) \ge 1/c_0$. Let $t_{1,\varepsilon} = \eta^+_{\varepsilon}(1/c_0,-\varepsilon,\varepsilon)$ and $t_{2,\varepsilon} = \eta^-_{\varepsilon}(t_{1,\varepsilon},\varepsilon,-\varepsilon)$. Then $t_{2,\varepsilon} > t_{1,\varepsilon} > 1/c_0$ and $t_{2,\varepsilon} - 1/c_0 = O(\varepsilon)$ as $\varepsilon \downarrow 0$ (see Figure\;\ref{fig:Geometry}). For $k \in \mathbb{N}$, put
\begin{align*}
	I_k & = \left\{(t,-\varepsilon) \mid 1/c_0 + (k-1)(t_{2,\varepsilon} - 1/c_0) \le t \le t_{2,\varepsilon} + (k-1)(t_{2,\varepsilon} - 1/c_0)\right\}, \\
	J_k & = \left\{(t,\varepsilon) \mid t_{1,\varepsilon} + (k-1)(t_{2,\varepsilon} - 1/c_0) \le t \le t_{1,\varepsilon} + k(t_{2,\varepsilon} - 1/c_0)\right\}.
\end{align*}
Define
\begin{align*}
	(\widetilde{w}_{\varepsilon})_{+}(t,-\varepsilon) & = \max\left\{0, \widetilde{w}_{\varepsilon}(t,-\varepsilon)\right\},\\
	(\widetilde{w}_{\varepsilon})_{-}(t,-\varepsilon) & = -\min\left\{0, \widetilde{w}_{\varepsilon}(t,-\varepsilon)\right\}.
\end{align*}
Let $(\widetilde{v}_{1,\varepsilon}, \widetilde{w}_{1,\varepsilon})$ and $(\widetilde{v}_{2,\varepsilon}, \widetilde{w}_{2,\varepsilon})$ be the continuous solutions to the integral equations $(\ref{eq : integral eq1})$ with initial data $(0,(\widetilde{w}_{\varepsilon})_+(t,-\varepsilon))$ and $(0,(\widetilde{w}_{\varepsilon})_-(t,-\varepsilon))$, respectively. These solutions are obtained by iteration, see \cite{O:1992}. From this, we find that $\widetilde{v}_{1,\varepsilon}(t,x), \widetilde{v}_{2,\varepsilon}(t,x) \le 0$ and $\widetilde{w}_{1,\varepsilon}(t,x), \widetilde{w}_{2,\varepsilon}(t,x) \ge 0$ for $0 \le t < T$ and $x \in \mathbb{R}$ such that $\eta^{+}_{\varepsilon}(t,x,-\varepsilon) \ge 1/c_0$. Using this fact, $(\ref{eq : integral eq1})$ with $x = \varepsilon$ and (\ref{eq:beps}), we obtain, for $t \ge t_{1,\varepsilon}$,
\begin{equation}\label{eq : 1}
	\begin{split}
	\widetilde{w}_{1,\varepsilon}(t,\varepsilon)
	& \ge \sqrt{\dfrac{c_0}{c_1}} (\widetilde{w}_{\varepsilon})_+(\eta^-_{\varepsilon}(t,\varepsilon,-\varepsilon),-\varepsilon), \\
	\widetilde{w}_{2,\varepsilon}(t,\varepsilon)
	& \ge \sqrt{\dfrac{c_0}{c_1}} (\widetilde{w}_{\varepsilon})_-(\eta^-_{\varepsilon}(t,\varepsilon,-\varepsilon),-\varepsilon).
	\end{split}
\end{equation}
Note that $\widetilde{w}_{\varepsilon} = \widetilde{w}_{1,\varepsilon} - \widetilde{w}_{2,\varepsilon}$ and further that $\widetilde{w}_{\varepsilon}(t,\varepsilon) = 0$ for $t \ge t_{1,\varepsilon}$. Hence $\widetilde{w}_{1,\varepsilon}(t,\varepsilon) = \widetilde{w}_{2,\varepsilon}(t,\varepsilon)$ for $t \ge t_{1,\varepsilon}$. This and $(\ref{eq : 1})$ yield that, for $t \ge t_{1,\varepsilon}$,
\[
	\widetilde{w}_{1,\varepsilon}(t,\varepsilon) = \widetilde{w}_{2,\varepsilon}(t,\varepsilon)
	\ge \sqrt{\dfrac{c_0}{c_1}} \left|\widetilde{w}_{\varepsilon}(\eta^-_{\varepsilon}(t,\varepsilon,-\varepsilon),-\varepsilon) \right|,
\]
so that
\begin{equation}\label{eq : 2}
	\|\widetilde{w}_{1,\varepsilon} + \widetilde{w}_{2,\varepsilon}\|_{L^{\infty}(J_1)}
	\ge 2\sqrt{\dfrac{c_0}{c_1}} \|\widetilde{w}_{\varepsilon}\|_{L^{\infty}(I_2)}.
\end{equation}
On the other hand, the solution $(\widetilde{v}_{3,\varepsilon},\widetilde{w}_{3,\varepsilon})$ to $(\ref{eq : representative2})$ with initial data $(0,\|\widetilde{w}_{\varepsilon}\|_{L^{\infty}(I_1 \cup I_2)})$ at $x = -\varepsilon$ is given by
\[
	\widetilde{v}_{3,\varepsilon} = \dfrac{1}{2}\left(1-\dfrac{c_0}{c_{\varepsilon}(x)}\right) \|\widetilde{w}_{\varepsilon}\|_{L^{\infty}(I_1 \cup I_2)},
	\quad \widetilde{w}_{3,\varepsilon} = \dfrac{1}{2}\left(1+\dfrac{c_0}{c_{\varepsilon}(x)}\right) \|\widetilde{w}_{\varepsilon}\|_{L^{\infty}(I_1 \cup I_2)},
\]
and $\widetilde{w}_{1,\varepsilon} + \widetilde{w}_{2,\varepsilon}$ is smaller than $\widetilde{w}_{3,\varepsilon}$ on $J_1$, i.e.,
\begin{equation}\label{eq : 3}
	\|\widetilde{w}_{1,\varepsilon} + \widetilde{w}_{2,\varepsilon}\|_{L^{\infty}(J_1)}
	\le \dfrac{1}{2}\left(1+\dfrac{c_0}{c_1}\right) \|\widetilde{w}_{\varepsilon}\|_{L^{\infty}(I_1 \cup I_2)}.
\end{equation}
Combining $(\ref{eq : 2})$ and $(\ref{eq : 3})$, we get
\begin{equation}\label{eq : 4}
	\|\widetilde{w}_{\varepsilon}\|_{L^{\infty}(I_2)}
	\le \dfrac{1}{4}\left(\sqrt{\dfrac{c_1}{c_0}}+\sqrt{\dfrac{c_0}{c_1}}\right) \|\widetilde{w}_{\varepsilon}\|_{L^{\infty}(I_1 \cup I_2)}.
\end{equation}
Since by assumption,
\[
	\dfrac{1}{4}\left(\sqrt{\dfrac{c_1}{c_0}}+\sqrt{\dfrac{c_0}{c_1}}\right) < 1,
\]
it must hold from $(\ref{eq : 4})$ that
\[
	\|\widetilde{w}_{\varepsilon}\|_{L^{\infty}(I_1 \cup I_2)} = \|\widetilde{w}_{\varepsilon}\|_{L^{\infty}(I_1)}.
\]
Together with this, $(\ref{eq : 4})$ leads to
\[
	\|\widetilde{w}_{\varepsilon}\|_{L^{\infty}(I_2)} \le \dfrac{1}{4}\left(\sqrt{\dfrac{c_1}{c_0}}+\sqrt{\dfrac{c_0}{c_1}}\right) \|\widetilde{w}_{\varepsilon}\|_{L^{\infty}(I_1)} .
\]
Repeat this process to get for $k \in \mathbb{N}$,
\[
	\|\widetilde{w}_{\varepsilon}\|_{L^{\infty}(I_k)} \le \left[\dfrac{1}{4}\left(\sqrt{\dfrac{c_1}{c_0}}+\sqrt{\dfrac{c_0}{c_1}}\right)\right]^{k-1}\|\widetilde{w}_{\varepsilon}\|_{L^{\infty}(I_1)} .
\]
We now fix $t_0 > 1/c_0$ arbitrarily and choose $k_{\varepsilon} \in \mathbb{N}$ so that $t_0 \in I_{k_{\varepsilon}}$. Then
\[
	\|\widetilde{w}_{\varepsilon}\|_{L^{\infty}(I_{k_{\varepsilon}})} \le \left[\dfrac{1}{4}\left(\sqrt{\dfrac{c_1}{c_0}}+\sqrt{\dfrac{c_0}{c_1}}\right)\right]^{(t_0 - 1/c_0)/(t_{2,\varepsilon} - 1/c_0)-1}\|\widetilde{w}_{\varepsilon}\|_{L^{\infty}(I_1)}.
\]
Recall that $t_{2,\varepsilon} - 1/c_0 = O(\varepsilon)$ as $\varepsilon \downarrow 0$, and note that $\|\widetilde{w}_{\varepsilon}\|_{L^{\infty}(I_1)} = O(\varepsilon^{-p})$ for some $p \ge 0$. Hence, for any $N \ge 0$,
\[
	\|\widetilde{w}_{\varepsilon}\|_{L^{\infty}(I_{k_{\varepsilon}})} = O(\varepsilon^N) \quad {\rm as}\ \varepsilon \downarrow 0.
\]
Since $t_0 \in I_{k_{\varepsilon}}$ and $\|\widetilde{w}_{\varepsilon}\|_{L^{\infty}(I_k)}$ is decreasing in $k \in \mathbb{N}$, we get
\[
	\|\widetilde{w}_{\varepsilon}\|_{L^{\infty}(\{(t,-\varepsilon) \mid t \ge t_0\})} = O(\varepsilon^N) \quad {\rm as}\ \varepsilon \downarrow 0.
\]
In the region $\{(t,x) \mid t \ge \eta^-_{\varepsilon}(t_0,-\varepsilon,x),\ x \le -\varepsilon\}$ the right-hand side of the second equation in (\ref{eq : representative2}) vanishes, so $\widetilde{w}_{\varepsilon}$ is constant along any characteristic curve there, and we get
\begin{equation}\label{eq : 5}
	\|\widetilde{w}_{\varepsilon}\|_{L^{\infty}(\{(t,x) \mid t \ge \eta^-_{\varepsilon}(t_0,-\varepsilon,x),\ x \le -\varepsilon\})} = O(\varepsilon^N) \quad {\rm as}\ \varepsilon \downarrow 0.
\end{equation}
Furthermore, using the fact that $\widetilde{v}_{\varepsilon} = 0$ on $\{(t,-\varepsilon) \mid t \ge t_0\}$, we find that, for any $N \ge 0$,
\begin{equation}\label{eq : 6}
	\begin{split}
	\|\widetilde{v}_{\varepsilon}\|_{L^{\infty}(\{(t,x) \mid t \ge \eta^+_{\varepsilon}(t_0,-\varepsilon,x),\ x \ge -\varepsilon\})} = O(\varepsilon^N) \quad {\rm as}\ \varepsilon \downarrow 0, \\
	\|\widetilde{w}_{\varepsilon}\|_{L^{\infty}(\{(t,x) \mid t \ge \eta^+_{\varepsilon}(t_0,-\varepsilon,x),\ -\varepsilon \le x \le \varepsilon\})} = O(\varepsilon^{N}) \quad {\rm as}\ \varepsilon \downarrow 0,
	\end{split}
\end{equation}
invoking the uniqueness argument as in the proof of \cite[Theorem 3.1]{DHO:2013}.
Estimates $(\ref{eq : 5})$ and $(\ref{eq : 6})$ are rewritten with $(\widetilde{v}_{\varepsilon},\widetilde{w}_{\varepsilon}) = (-b_{\varepsilon}v_{2,\varepsilon}, b_{\varepsilon}w_{2,\varepsilon})$ as follows: for any $N \ge 0$,
\[
	\begin{split}
		\|w_{2,\varepsilon}\|_{L^{\infty}(\{(t,x) \mid t \ge t_0,\ \gamma^-_{\varepsilon}(t_0,-\varepsilon,t) \le x \le -\varepsilon\})} = O(\varepsilon^{N}) \quad {\rm as}\ \varepsilon \downarrow 0,\\
	\|v_{2,\varepsilon}\|_{L^{\infty}(\{(t,x) \mid t \ge t_0,\ -\varepsilon \le x \le \gamma^+_{\varepsilon}(t_0,-\varepsilon,t)\})} = O(\varepsilon^{N}) \quad {\rm as}\ \varepsilon \downarrow 0, \\
	\|w_{2,\varepsilon}\|_{L^{\infty}(\{(t,x) \mid t \ge t_0,\ -\varepsilon \le x \le \min\{\varepsilon,\gamma^+_{\varepsilon}(t_0,-\varepsilon,t)\}\})} = O(\varepsilon^{N}) \quad {\rm as}\ \varepsilon \downarrow 0.
	\end{split}
\]
This and Lemma 1.2.3 in \cite{GKOS:2001} show that $(V_2,W_2) = 0$ in the region
\[
	\{(t,x) \mid 1-c_0 t < x < -c_1/c_0+c_1t,\ t_0 < t < T\}.
\]
Since $t_0 > 1/c_0$ is arbitrary, we conclude that $(V_2,W_2) = 0$ in $D$.

The case $c_0 < c_1$ can be treated by the same arguments, using the fact that $(-v_{2,\varepsilon},w_{2,\varepsilon})$ satisfies the Cauchy problem
\begin{equation}\label{eqn : representative3}
	\begin{array}{rclcl}
		(\partial_x + b_{\varepsilon}(x)\partial_t)(-v_{2,\varepsilon}) &=& -n_{\varepsilon}(x)((-v_{2,\varepsilon})+w_{2,\varepsilon}), &\ &0 < t < T,\ x \in \mathbb{R},\\[2pt]
		(\partial_x-b_{\varepsilon}(x)\partial_t) w_{2,\varepsilon} &=& -n_{\varepsilon}(x)((-v_{2,\varepsilon})+w_{2,\varepsilon}), &  &0 < t < T,\ x \in \mathbb{R},\\[2pt]			 
		v_{2,\varepsilon}|_{x = -\varepsilon} & = & 0, &  & 1/c_0 \le t < T,\\[2pt]
		w_{2,\varepsilon}|_{x = \varepsilon} & =& 0, &  & t_{\varepsilon} \le t < T,
	\end{array}
\end{equation}
and considering the corresponding integral equations for $(-v_{2,\varepsilon},w_{2,\varepsilon})$. The proof of Theorem \ref{thm : propagation1} is now complete.
\end{proof}

As mentioned in Example 1%\ref{ex : delta}
, one may regularize the piecewise constant propagation speed $c(x) = c_0 + (c_1 - c_0)H(x)$ in such a way that the corresponding element $C\in\cG_{\infty,2}(\R)$ is $\Ginf$-regular on $\R$. Indeed, it suffices to use a mollifier $\varphi_{h(\eps)}$, where $h(\eps) > 0$ and $(1/h(\eps))_{\varepsilon \in (0,1]}$ is a slow scale net. It has been shown in \cite{HOP:06} that an element $C\in \G(\R)$ of bounded type belongs to $\Ginf(\R)$ if and only if all derivatives are of slow scale type.

Consider problem $(\ref{eq : generalized wave with DC in x})$ with $U_0 \equiv 0$ and $U_1$ given by the class of $(\varphi_{\varepsilon}(x+1))_{\varepsilon \in (0,1]}$ as in Theorem $\ref{thm : propagation1}$. We shall show in the following theorem that the reflected ray
\[
	\Gamma_3 = \left\{(t,x) \mid x = 1 - c_0t,\ 1/c_0 \le t < T\right\}
\]
does not belong to $\singsupp_{\Ginf} U$, if $C$ is $\Ginf$-regular on $\R$ (see Figure \ref{fig : singsupp of U2}).
In this situation, the $\Ginf$-singularities in the solution arise solely from the flow-out of the singularities in the initial data. There are no reflected singularities, just as in the classical case of $\Cinf$-regular coefficients. This phenomenon has already been observed for scalar equations in the $\G$-setting in \cite{GO:2011a} and shows again that a non-logarithmic regularization is essential for capturing reflected singularities.

\begin{figure}[htbp]
\begin{center}
\includegraphics{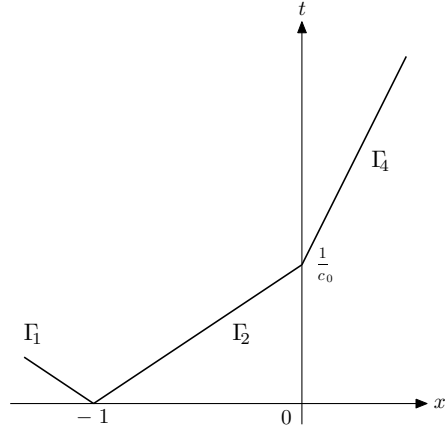}
\end{center}
\caption{The $\Ginf$-singular support of the solution $U$ with slow scale coefficient.}
\label{fig : singsupp of U2}
\end{figure}

\begin{theorem}\label{thm : propagation2}
Let $c_0$, $c_1 > 0$ be such that $c_0 \ne c_1$ and let $T > 1/c_0$. Furthermore, let $C$, $U_0$ and $U_1$ be as described above and let $U \in \G_{2,2}([0,T] \times \mathbb{R})$ be the solution to problem $(\ref{eq : generalized wave with DC in x})$. If $C$ is $\Ginf$-regular on $\R$, then
\begin{equation}\label{eq : singsupp of U3}
	\begin{split}
		\singsupp_{\Ginf} U
		& = \{(t,x) \mid x=-1-c_0 t,\ 0 \le t < T\} \\
		&\quad \cup \{(t,x) \mid x=-1+c_0 t,\ 0 \le t \le 1/c_0\} \\
		&\quad \cup \{(t,x) \mid x=-c_1/c_0+c_1t,\ 1/c_0 \le t < T\} \\
		& =:  \Gamma_1 \cup \Gamma_2 \cup \Gamma_4.
	\end{split}
\end{equation}
\end{theorem}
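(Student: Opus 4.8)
The plan is to establish the inclusion $\singsupp_{\Ginf} U \subset \Gamma_1 \cup \Gamma_2 \cup \Gamma_3 \cup \Gamma_4$ exactly as in the proof of Theorem~\ref{thm : propagation1}: the support considerations that force $U$ to vanish to the left of $\Gamma_1$ and to the right of $\Gamma_2 \cup \Gamma_4$, and to be constant in the region enclosed by $\Gamma_1,\Gamma_2,\Gamma_3$, rest only on the uniform upper and lower bounds $b_0=\min(c_0,c_1)\le c_\eps \le \max(c_0,c_1)=b_1$ and are therefore insensitive to the regularization scale of $C$. I would then argue that $\Gamma_1,\Gamma_2$ and $\Gamma_4$ do belong to $\singsupp_{\Ginf}U$ by the same reasoning as before: near $t=0$ the coefficient is identically $c_0$ on a neighborhood of $x=-1$, so $U$ is given there by d'Alembert's formula and inherits the genuine $\Ginf$-singularity of $U_1$ at $x=-1$ along the rays $\Gamma_1,\Gamma_2$; and the transmitted ray $\Gamma_4$ carries a pulse of width $O(\eps)$ whose $k$-th derivatives grow like $\eps^{-k-1}$ and hence cannot satisfy (\ref{eq:Ginf}) with a single exponent $N$. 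The entire novelty thus reduces to proving that $U$ is $\Ginf$-regular in a full neighborhood of every point of $\Gamma_3$ (and in the region $D$), \emph{without} the smallness hypothesis on the jump used in Theorem~\ref{thm : propagation1}.

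To this end I would reuse the first-order formulation (\ref{eq : system1}) together with the splitting $(U_1,U_1)=(0,U_1)+(U_1,0)$ into $(V_1,W_1)$ and $(V_2,W_2)$. Since $(V_1,W_1)=0$ in a neighborhood of $\Gamma_3$ and in $D$, it suffices to show that the reflected component $W_2$ (and the right-moving $V_2$) are $\Ginf$-regular there. Passing to the interchanged variables and the representation (\ref{eq : integral eq1}), with $n_\eps=\frac{d}{dy}\log\sqrt{b_\eps}$ as in (\ref{eq:beps}), I would analyze the leading (Born) contribution to $\wt{w}_\eps$, namely the integral of $n_\eps(\xi)$ against the transported incoming profile. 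Here the decisive scale separation enters: because $C$ is $\Ginf$-regular it is mollified at a slow scale $h(\eps)$ with $1/h(\eps)$ a slow scale net, so that $n_\eps$ is supported in $[-h(\eps),h(\eps)]$ with $\|n_\eps\|_{L^\infty}=O(1/h(\eps))$ and $\int|n_\eps|\,dy=O(1)$, whereas the incoming profile $\varphi_\eps$ has width $O(\eps)\ll h(\eps)$. Changing the integration variable to the phase $\sigma(\xi)$ whose $\xi$-derivative equals $-2b_\eps(\xi)$ (bounded away from $0$), the narrow profile localizes the integral to a $\xi$-window of length $O(\eps)$ on which $n_\eps$ is essentially constant. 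Consequently $\wt{w}_\eps$ near $\Gamma_3$ equals $n_\eps$ evaluated at a point $\xi_0(t,x)$ depending smoothly on $(t,x)$ with $O(1)$ derivatives, times an $O(1)$ factor, up to an error again dominated by a slow scale net.

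The point of this computation is that every differentiation of $\wt{w}_\eps$ in $(t,x)$ falls on $n_\eps(\xi_0(t,x))$ and produces exactly one further factor $1/h(\eps)$. Since $1/h(\eps)$ is a slow scale net, its $(k+1)$-th power is $O(\eps^{-1})$ for each fixed $k$, so every fixed-order derivative of $\wt{w}_\eps$ is $O(\eps^{-1})$; that is, (\ref{eq:Ginf}) holds with the \emph{single} exponent $N=1$ for all orders of differentiation, which is precisely $\Ginf$-regularity. Transporting along the characteristics of speed $c_0$ off the transition zone (where $n_\eps\equiv 0$) preserves this estimate, so $W_2$ is $\Ginf$ in a neighborhood of $\Gamma_3$ and throughout $D$, while $V_2$, being right-moving, carries no singularity onto the left-moving ray $\Gamma_3$ and is $\Ginf$ there as well. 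Hence $\partial_t U=(V+W)/2$ and $\partial_x U=(W-V)/(2C)$ — the latter using that $C$ is bounded below by a positive constant and $\Ginf$-regular, so $1/C\in\Ginf$ — are $\Ginf$ near $\Gamma_3$ and in $D$, and integration yields $U\in\Ginf$ there. Combined with the inclusion of the first paragraph, this gives (\ref{eq : singsupp of U3}).

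The main obstacle I anticipate is the rigorous, all-orders control of the representation (\ref{eq : integral eq1}): one must show that the full Picard iteration (not merely the first Born term) converges and that both the higher-order terms and the error from freezing $n_\eps$ across the $O(\eps)$-window are dominated by slow scale nets uniformly in the order of differentiation. Because $\int|n_\eps|=O(1)$ the iteration converges geometrically with an $\eps$-independent ratio, and each iterate is a further convolution of slow-scale quantities against the narrow profile; the delicate bookkeeping is to verify that differentiating the coupled system never generates a factor worse than $1/h(\eps)$ per derivative — equivalently, that no derivative ever lands on the narrow profile so as to produce an unabsorbed $\eps^{-1}$. Establishing this uniform slow-scale bound on all derivatives of $(\wt{v}_\eps,\wt{w}_\eps)$ is the crux; the remaining geometric and support statements are routine adaptations of Theorem~\ref{thm : propagation1}.
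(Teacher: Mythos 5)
Your architecture is sound, and the mechanism you identify is the correct one: the scale separation $\eps \ll h(\eps)$ together with the fact that every fixed power of the slow scale net $1/h(\eps)$ is $O(\eps^{-1})$ is indeed why the reflected ray $\Gamma_3$ carries no $\Ginf$-singularity. But as it stands the proposal has a genuine gap, and you name it yourself: the uniform-in-order control of \emph{all} derivatives of $(\wt{v}_\eps,\wt{w}_\eps)$ through the transition zone is exactly the content of the theorem, and your text establishes it only for the frozen first Born term. The danger you flag is not hypothetical: in the representation $(\ref{eq : integral eq1})$ the $(t,x)$-dependence sits inside the arguments of the unknowns, so every differentiation a priori hits transported copies of the $\eps$-width incoming profile, and the change-of-phase/integration-by-parts device must be shown to survive the full Picard iteration (all multiple-scattering terms, all orders of differentiation, simultaneously) --- precisely the bookkeeping you defer. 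The same deferral undermines your lower bound: the assertion that the transmitted pulse on $\Gamma_4$ (and on the portion of $\Gamma_2$ whose backward domain of dependence meets the transition zone) keeps $\eps$-scale structure with $k$-th derivatives of exact order $\eps^{-k-1}$ also requires tracking the solution \emph{across} the zone, where d'Alembert's formula with constant speed is not available.

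For comparison, the paper closes exactly this gap by a different and completed device. It interchanges the roles of $t$ and $x$, takes data on lines $\{x=x_0\}$, and invokes the commutator argument of Theorem 5.1 of \cite{DHO:2013}: the characteristic derivatives $(\partial_x+B\partial_t)^k\wt{W}$ are estimated in terms of the $L^\infty$-norm of the \emph{zeroth} derivative of $\wt{V}$, so the pulse height $O(\eps^{-1})$ enters only linearly and uniformly in $k$, while each commutation costs only derivatives of $B$, which are slow scale; on the region between $\Gamma_3$ and the $t$-axis the opposite derivatives $(\partial_x-B\partial_t)^k\wt{W}$ are read off from the equation because $\wt{V}$ vanishes there, and the remaining region is handled by the global regularity statement, Proposition \ref{prop : regularity2}. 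The lower bound (no splitting at $x=0$, persistence of the singularity along $\Gamma_2\cup\Gamma_4$) is likewise obtained from Theorems 4.1 and 5.1 of \cite{DHO:2013} after the interchange, not from a direct pulse computation. If you wish to keep your Born-series route, the missing lemma you must prove is the analogue of that commutator estimate: for every $k$, all derivatives of $\wt{w}_\eps$ of order $k$ near $\Gamma_3$ and in $D$ are bounded by $C_k\,h(\eps)^{-k-1}\bigl(1+\|\wt{v}_\eps\|_{L^\infty}\bigr)$, with the pulse norm appearing to the first power only. Until such a statement is established, the proposal remains a plan rather than a proof at its decisive point.
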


\begin{proof}
Assertion $(\ref{eq : singsupp of U3})$ will hold if we show that
\begin{equation}\label{eq : singsupp of U4}
	\singsupp_{\Ginf} U \subset \Gamma_1 \cup \Gamma_2 \cup \Gamma_4.
\end{equation}
This can be seen as follows. For the sake of simplicity, we assume that $T > 2/c_0$. As in the proof of Theorem \ref{thm : propagation1}, we interchange the roles of $t$ and $x$ and consider $(U|_{x=-1},\partial_x U|_{x=-1})$ as initial data. From $(\ref{eq : singsupp of U4})$ and d'Alembert's formula, we see that $\singsupp_{\Ginf} U|_{x=-1} = \{0\}$ and $\partial_x U|_{x=-1} = 0$ near $t = 0$. The singularity of $U|_{x=-1}$ at $t = 0$ propagates along $\Gamma_1$ as $x$ goes from $-1$ to $-\infty$. As $x$ goes from $-1$ to $\infty$, it propagates along $\Gamma_2$ and $\Gamma_4$. No splitting of the singularity occurs at $x=0$ unlike in Theorem \ref{thm : propagation1}. This can be proven similarly to the proofs of Theorems 4.1 and 5.1 of \cite{DHO:2013}. Thus $(\ref{eq : singsupp of U4})$ implies assertion $(\ref{eq : singsupp of U3})$.

To show that $(\ref{eq : singsupp of U4})$ holds, we consider system $(\ref{eq : system1})$. Put $\Gamma_6 = \{(t,0) \mid 1/c_0 \le t < T\}$. Let $D_1$ be the region bounded by $\Gamma_3$ and $\Gamma_6$, and $D_2$ be the region bounded by $\Gamma_4$ and $\Gamma_6$. It is immediate to check that
\[
	\begin{split}
		\supp V \subset \Gamma_2 \cup \Gamma_4 \cup \Gamma_6 \cup D_2, \\
		\supp W \subset \Gamma_1 \cup \Gamma_3 \cup \Gamma_6 \cup D_1.
	\end{split}
\]
Put $B = 1/C$ and $(\widetilde{V},\widetilde{W}) = (-BV, BW)$. Clearly
\begin{equation}\label{eq : supp of V and W}
	\begin{split}
		\supp \widetilde{V} \subset \Gamma_2 \cup \Gamma_4 \cup \Gamma_6 \cup D_2, \\
		\supp \widetilde{W} \subset \Gamma_1 \cup \Gamma_3 \cup \Gamma_6 \cup D_1.
	\end{split}
\end{equation}
Furthermore, $(\widetilde{V},\widetilde{W})$ satisfies the system
\[
	\begin{split}
		(\partial_x + B\partial_t)\widetilde{V} & = N(\widetilde{V}-\widetilde{W}),\\
		(\partial_x-B\partial_t) \widetilde{W} & = N(\widetilde{W}-\widetilde{V}),
	\end{split}
\]
where $N = B^{\prime}/(2B)$. Take the value of $(\widetilde{V},\widetilde{W})$ along some line $\{x=x_0\}$ with $x_0 > 0$ as initial data and consider the system above. Using the commutator argument as in the proof of Theorem 5.1 of \cite{DHO:2013}, the derivatives $(\partial_x+B\partial_t)^k \widetilde{W}$ can be estimated in terms of the $L^\infty$-norm of the zeroth derivative of $\widetilde{V}$ for whatever power $k$. The derivatives $(\partial_x-B\partial_t)^k \widetilde{W}$ are estimated on $D_1$ using the fact that $\widetilde{V}$ vanishes there. We conclude that $\widetilde{W}$ is $\Ginf$-regular on $\Gamma_3 \cup D_1$. With this in mind, consider now the value of $(\widetilde{V},\widetilde{W})$ along $\{x=x_1\}$ with $x_1 < 0$ as initial data.  Then Proposition \ref{prop : regularity2} below shows that $\widetilde{W}$ is $\Ginf$-regular along $\Gamma_6$ and $\widetilde{V}$ is $\Ginf$-regular on $\Gamma_6 \cup D_2$. Hence from $(\ref{eq : supp of V and W})$,
\begin{gather*}
	\singsupp_{\Ginf} \widetilde{V} \subset \Gamma_2 \cup \Gamma_4, \\
	\singsupp_{\Ginf} \widetilde{W} \subset \Gamma_1.
\end{gather*}

This implies $(\ref{eq : singsupp of U4})$. The proof of Theorem \ref{thm : propagation2} is now complete.
\end{proof}

\subsection{The time-dependent case in one dimension}

This subsection is devoted to problem $(\ref{eq : wave with DC in t})$ in one dimension:
\begin{equation}\label{eq:1Dwavetime}
	\begin{array}{l}
		\partial_t^2u - c(t)^2\partial_x^2u = 0,  \quad t \geq 0, \ x \in \mathbb{R}, \vspace{4pt}\\
		u|_{t = 0} = u_{0},\quad \partial_tu|_{t = 0} = u_{1}, \quad x \in \mathbb{R}.
	\end{array}
\end{equation}
The case of delta functions as initial data has been settled in \cite{DHO:2013}. The goal of this subsection is to extend the results of \cite{DHO:2013} to arbitrary $\Ginf$-point singularities in the initial data.
Assume that the coefficient $c(t)$ is given by
\[
	c(t) = c_0 + (c_1 - c_0)H(t-1)
\]
where $c_0$, $c_1 > 0$, $c_0 \ne c_1$ and $H$ is the Heaviside function. As in the previous subsection, we define a generalized function $C \in \G(\mathbb{R})$ by means of a representative $(c_{\varepsilon})_{\varepsilon \in (0,1]} = (c \ast \varphi_{\varepsilon})_{\varepsilon \in (0,1]}$ with a mollifier $\varphi_{\varepsilon}$ as in Example~1%\ref{ex : delta}
. Then $\singsupp_{\Ginf} C = \{t=1\}$. Thus we will consider the Cauchy problem
\begin{equation}\label{eq : generalized wave with DC in t}
	\begin{array}{lr}
		\partial_t^2U - C^2\partial_x^2U = 0 & \mbox{in}\ \G([0,\infty)\times\mathbb{R}), \vspace{4pt}\\
		U|_{t = 0} = U_0,\quad \partial_tU|_{t = 0} = U_1 & \mbox{in}\ \G(\mathbb{R}).
	\end{array}
\end{equation}
By \cite[Theorem 3.1]{DHO:2013}, problem $(\ref{eq : generalized wave with DC in t})$ has a unique solution $U \in \G([0,\infty) \times \mathbb{R})$ for any initial data $U_0$, $U_1 \in \G (\mathbb{R})$.

An important ingredient in the arguments is the persistence of regularity from regular initial data. Thus we are first going to show that, if $U_0$, $U_1 \in \Ginf (\mathbb{R})$, then $U$ is $\Ginf$-regular off the $\Ginf$-singular support of the coefficient $C$.

\begin{proposition}\label{prop : regularity}
Let $C$ be as described above and let $U \in \G([0,\infty) \times \mathbb{R})$ be the solution to problem $(\ref{eq : generalized wave with DC in t})$ with initial data $U_0$, $U_1 \in \G(\mathbb{R})$. If $U_0$, $U_1 \in \Ginf(\mathbb{R})$, then
\begin{equation}\label{eq : regularity of U}
	\singsupp_{\Ginf} U \subset \left\{(t,x) \mid t = 1,\ x \in \mathbb{R}\right\}.
\end{equation}
\end{proposition}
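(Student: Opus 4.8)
The plan is to diagonalize the wave operator into a first-order system and to propagate $\Ginf$-regularity through the thin transition layer of the coefficient. Fix a representative $(u_\eps)$ of $U$ solving $\partial_t^2 u_\eps - c_\eps(t)^2\partial_x^2 u_\eps = 0$ with $\Ginf$-initial data $(u_{0\eps},u_{1\eps})$, and set $V_\eps = \partial_t u_\eps - c_\eps(t)\partial_x u_\eps$ and $W_\eps = \partial_t u_\eps + c_\eps(t)\partial_x u_\eps$. A direct computation gives the system
\[
 (\partial_t + c_\eps\partial_x)V_\eps = \tfrac{c_\eps'}{2c_\eps}(V_\eps - W_\eps), \qquad (\partial_t - c_\eps\partial_x)W_\eps = \tfrac{c_\eps'}{2c_\eps}(W_\eps - V_\eps),
\]
whose source is supported in the strip $S_\eps = \{\,|t-1|\le\eps\,\}$, since $c_\eps' = (c_1-c_0)\varphi_\eps(\,\cdot-1)$. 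Below the strip $c_\eps\equiv c_0$ and above it $c_\eps\equiv c_1$, so there the system decouples into constant-coefficient transport equations. By finite propagation speed it suffices to work over a fixed compact $x$-interval, and to prove two things: (i) the $\Ginf$-bounds of the initial data survive the crossing of $S_\eps$, yielding a single exponent $N$ controlling $\sup_x|\partial_x^k V_\eps(t_*,\cdot)|$ and $\sup_x|\partial_x^k W_\eps(t_*,\cdot)|$ for all $k$ on a fixed slice $t_*\in(1,1+\delta)$; and (ii) that these slice bounds propagate into any compact $K\subset\{t>1\}$.

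The \textbf{main step} is (i), and it is also the only delicate point, because inside $S_\eps$ the source amplitude $n_\eps = c_\eps'/(2c_\eps)$ is of size $O(\eps^{-1})$ and its derivatives are of size $O(\eps^{-k})$. The resolution rests on the coefficient depending on $t$ alone: differentiating the system in $x$ leaves it unchanged, so $(\partial_x^k V_\eps,\partial_x^k W_\eps)$ satisfies exactly the same system with initial data $(\partial_x^k V_\eps,\partial_x^k W_\eps)|_{t=0}$, and no $x$-derivative of $c_\eps$ ever enters. Writing the integral equations along the characteristics (as in the proof of Theorem~\ref{thm:transport1DG} and of Theorem~\ref{thm:systemG}) and setting $m_k(t)=\max(\|\partial_x^k V_\eps(t,\cdot)\|_{L^\infty},\|\partial_x^k W_\eps(t,\cdot)\|_{L^\infty})$, Gronwall's inequality gives $m_k(t)\le m_k(0)\exp\big(2\int_0^t|n_\eps(s)|\,ds\big)$. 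Since $c_\eps$ is bounded away from zero and $\|c_\eps'\|_{L^1}=|c_1-c_0|\,\|\varphi\|_{L^1}$ is bounded uniformly in $\eps$, the exponential factor is bounded by an $\eps$-independent constant that is moreover independent of $k$. As $(u_0,u_1)\in\Ginf(\R)$ provides a single $N$ with $\sup_x|\partial_x^k V_\eps(0,\cdot)|,\ \sup_x|\partial_x^k W_\eps(0,\cdot)| = O(\eps^{-N})$ for all $k$, the same estimate with the same $N$ holds on the slice $t=t_*$ lying above $S_\eps$.

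It remains to recover mixed and pure $t$-derivatives and to pass from the slice to $K$. Above $S_\eps$ one has $c_\eps'\equiv 0$, hence $\partial_t u_\eps = (V_\eps+W_\eps)/2$ and $c_1\partial_x u_\eps = (W_\eps-V_\eps)/2$, so $\partial_t\partial_x^k u_\eps$ is controlled by $\partial_x^k V_\eps$ and $\partial_x^k W_\eps$; and the equation reduces to $\partial_t^2 u_\eps = c_1^2\partial_x^2 u_\eps$, which converts any further pair of $t$-derivatives into $x$-derivatives. Thus every mixed derivative $\partial_t^j\partial_x^k u_\eps$ obeys an $O(\eps^{-N})$ bound on $t=t_*$ with the uniform $N$ from step (i). Finally, for $\eps<t_*-1$ the slice $t=t_*$ lies in the constant-coefficient region, and d'Alembert's formula transports these $\Ginf$-bounds to all of $\{t\ge t_*\}\supset K$, giving $U\in\Ginf$ on $\{t>1\}$. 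The region $\{t<1\}$ is handled at once: there $c_\eps\equiv c_0$ and d'Alembert propagates the $\Ginf$-initial data without loss. Together these establish $\singsupp_{\Ginf}U\subset\{t=1\}$. The one genuine obstacle is ensuring that the amplification factor in (i) is uniform in the differentiation order $k$; this is exactly what the $x$-independence of $c$ and the uniform $L^1$-bound on $c_\eps'$ provide, and it is precisely where the restriction to one space dimension with a $t$-dependent coefficient is essential.
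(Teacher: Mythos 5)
Your proof is correct and takes essentially the same route as the paper's: the diagonalization $V_\eps=\partial_t u_\eps-c_\eps\partial_x u_\eps$, $W_\eps=\partial_t u_\eps+c_\eps\partial_x u_\eps$, the key observation that $x$-differentiation leaves the system unchanged (the coefficient depends on $t$ only) so that Gronwall gives an amplification factor $\exp\bigl(2\int_0^T|c_\eps'(s)/(2c_\eps(s))|\,ds\bigr)$ bounded uniformly in $\eps$ and in the derivative order, and the recovery of $t$-derivatives off $\{t=1\}$ from the local constancy of $c_\eps$ there. The only difference is organizational -- you split the argument into crossing the strip $\{|t-1|\le\eps\}$ and then transporting from a fixed slice, whereas the paper runs a single Gronwall estimate on trapezoidal regions $K_T$ -- and this is immaterial.
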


\begin{proof}
The proof is similar to that of \cite[Theorem 3.1]{DHO:2013}, where an existence result for problem $(\ref{eq : generalized wave with DC in t})$ with a more general coefficient has been proven. Put $V= \partial_t U - C\partial_xU$ and $W= \partial_tU + C\partial_xU$. Then problem $(\ref{eq : generalized wave with DC in t})$ can be rewritten as the Cauchy problem for a first-order hyperbolic system
\begin{equation}\label{eq : generalized system}
	\begin{array}{rclcl}
		(\partial_t + C\partial_x)V &=& M(V-W) &\ & \mbox{in}\ \G([0,\infty)\times\mathbb{R}),\\[2pt]
		(\partial_t - C\partial_x)W &=& M(W-V) && \mbox{in}\ \G([0,\infty)\times\mathbb{R}),\\[2pt]
		V|_{t=0}\ =\ V_0 &=& U_1 - (C|_{t=0}) U_0^{\prime}&& \mbox{in}\ \G(\mathbb{R}),\\[2pt]
		W|_{t=0}\ =\ W_0 &=& U_1+ (C|_{t=0}) U_0^{\prime} && \mbox{in}\ \G(\mathbb{R}),
	\end{array}
\end{equation}
where $M = C^{\prime}/(2C) \in \G(\mathbb{R})$. The solution $(V,W)$ has a representative $(v_{\varepsilon},w_{\varepsilon})_{\varepsilon \in (0,1]}$ satisfying the Cauchy problem
\begin{equation}\label{eq : representative}
	\begin{array}{rclcl}
		(\partial_t + c_{\varepsilon}(t)\partial_x)v_{\varepsilon} &=& \mu_{\varepsilon}(t)(v_{\varepsilon}-w_{\varepsilon}), &\ &t > 0,\ x \in \mathbb{R},\\[2pt]
		(\partial_t - c_{\varepsilon}(t)\partial_x)w_{\varepsilon} &=& \mu_{\varepsilon}(t)(w_{\varepsilon}-v_{\varepsilon}), &  &t > 0,\ x \in \mathbb{R},\\[2pt]			
		v_{\varepsilon}|_{t = 0}\ =\ v_{0\varepsilon}& = & u_{1\varepsilon} - c_{\varepsilon}(0)u_{0\varepsilon}^{\prime},&  & x \in \mathbb{R},\\[2pt]
		w_{\varepsilon}|_{t = 0}\ =\ w_{0\varepsilon} & =& u_{1\varepsilon} + c_{\varepsilon}(0)u_{0\varepsilon}^{\prime}, &  & x \in \mathbb{R},
	\end{array}
\end{equation}
where $(u_{0\varepsilon})_{\varepsilon \in (0,1]}$, $(u_{1\varepsilon})_{\varepsilon \in (0,1]}$, $(c_{\varepsilon})_{\varepsilon \in (0,1]}$ and $(\mu_{\varepsilon})_{\varepsilon \in (0,1]}$ are representatives of $U_0$, $U_1$, $C$ and $M$, respectively, such that $\mu_{\varepsilon} = c_{\varepsilon}^{\prime}/(2c_{\varepsilon})$. Consider the characteristic curves $\gamma_{\varepsilon}^{+}(t,x,\tau)$ and $\gamma_{\varepsilon}^{-}(t,x,\tau)$ passing through $(t,x)$ at time $\tau = t$ which satisfy
\begin{align*}
	& \partial_{\tau}\gamma_{\varepsilon}^{+}(t,x,\tau) = c_{\varepsilon}(\tau), \qquad \gamma_{\varepsilon}^{+}(t,x,t) = x, \\
	& \partial_{\tau}\gamma_{\varepsilon}^{-}(t,x,\tau) = -c_{\varepsilon}(\tau), \qquad \gamma_{\varepsilon}^{-}(t,x,t) = x.
\end{align*}
Along these characteristic curves, $v_{\varepsilon}$ and $w_{\varepsilon}$ are respectively calculated as
\begin{equation}\label{eq : integral eq}
	\begin{split}
	v_{\varepsilon}(t,x) & = v_{0\varepsilon}(\gamma_{\varepsilon}^{+}(t,x,0)) + \int_0^t \mu_{\varepsilon}(s)(v_{\varepsilon}-w_{\varepsilon})(s,\gamma_{\varepsilon}^{+}(t,x,s))\,ds, \\
	w_{\varepsilon}(t,x) & = w_{0\varepsilon}(\gamma_{\varepsilon}^{-}(t,x,0)) + \int_0^t \mu_{\varepsilon}(s)(w_{\varepsilon}-v_{\varepsilon})(s,\gamma_{\varepsilon}^{-}(t,x,s))\,ds.
	\end{split}
\end{equation}
For each $T > 0$, we define $K_T$ as the trapezoidal region with corners $(0,-\xi)$, $(T,-\xi+c_2T)$, $(T,\xi-c_2T)$, $(0,\xi)$, where $c_2 = \max(c_0,c_1)$. From $(\ref{eq : integral eq})$, the following inequalities are easily deduced:
\begin{align*}
	\|v_{\varepsilon}\|_{L^{\infty}(K_T)} & \le \|v_{0\varepsilon}\|_{L^{\infty}(K_0)} + \int_0^T |\mu_{\varepsilon}(s)|(\|v_{\varepsilon}\|_{L^{\infty}(K_s)}+\|w_{\varepsilon}\|_{L^{\infty}(K_s)})\,ds, \\
	\|w_{\varepsilon}\|_{L^{\infty}(K_T)} & \le \|w_{0\varepsilon}\|_{L^{\infty}(K_0)} + \int_0^T |\mu_{\varepsilon}(s)|(\|w_{\varepsilon}\|_{L^{\infty}(K_s)}+\|v_{\varepsilon}\|_{L^{\infty}(K_s)})\,ds.
\end{align*}
We add these two inequalities and apply Gronwall's inequality to get
\begin{align*}
	& \|v_{\varepsilon}\|_{L^{\infty}(K_T)} + \|w_{\varepsilon}\|_{L^{\infty}(K_T)} \\
	& \qquad \le (\|v_{0\varepsilon}\|_{L^{\infty}(K_0)} + \|w_{0\varepsilon}\|_{L^{\infty}(K_0)}) \exp\left(2\int_0^T |\mu_{\varepsilon}(s)|\,ds\right).
\end{align*}
Differentiating the equations and using the same argument, we obtain for any $\alpha \in \mathbb{N}_0$,
\begin{equation}\label{eq : inequality}
	\begin{split}
	& \|\partial_x^{\alpha}v_{\varepsilon}\|_{L^{\infty}(K_T)} + \|\partial_x^{\alpha}w_{\varepsilon}\|_{L^{\infty}(K_T)} \\
	& \qquad \le \left(\left\|\partial_x^{\alpha}v_{0\varepsilon}\right\|_{L^{\infty}(K_0)} + \left\|\partial_x^{\alpha}w_{0\varepsilon}\right\|_{L^{\infty}(K_0)}\right) \exp\left(2\int_0^T |\mu_{\varepsilon}(s)|\,ds\right).
	\end{split}
\end{equation}
On the right-hand side, the terms involving $v_{0\varepsilon}$ and $w_{0\varepsilon}$ are of order $O(\varepsilon^{-N})$ for some $N \ge 0$ independent of $\alpha$. The exponential term is uniformly bounded in $\varepsilon$. Note that, on every compact set outside $\{(t,x) \mid t = 1,\ x \in \mathbb{R}\}$, $\p_t v_{\varepsilon}=-c\p_x v_{\varepsilon}$ and $\p_t w_{\varepsilon}=c \p_x w_{\varepsilon}$, when $\varepsilon$ is small enough. This and inequality $(\ref{eq : inequality})$ imply that $V$ and $W$ are $\Ginf$-regular outside $\{(t,x) \mid t = 1,\ x \in \mathbb{R}\}$. From the definitions of $V$ and $W$, assertion $(\ref{eq : regularity of U})$ follows.
\end{proof}

The central issue of this subsection is the propagation of the $\Ginf$-singular support from arbitrary initial data in $\cG(\R)$. As the prototypical case, we focus on a point singularity, say at the origin. We take $U_0$, $U_1 \in \Ginf (\mathbb{R}\setminus \{0\})$. Choose a cut-off function $\chi \in \Cinf_0(\mathbb{R})$ which is identically equal to one in a neighborhood of the origin. Split the initial data $(U_0,U_1)$ into two parts,
\[
 	(U_0,U_1) = (\chi U_0,\chi U_1) + ((1-\chi)U_0, (1-\chi)U_1).
\]
Then the $\Ginf$-singular support of the solution $\widetilde{U}$ to problem $(\ref{eq : generalized wave with DC in t})$ with initial data $(\chi U_0,\chi U_1)$ is contained in the set
\begin{equation}\label{eq:singsupp1}
\begin{aligned}
	&\left\{(t,x) \mid x = \pm\int_0^t c(s)\,ds,\ t \ge 0\right\} \\
	&\qquad\cup \left\{(t,x) \mid x = \pm\left(2\int_0^1 c(s)\,ds-\int_0^t c(s)\,ds\right),\ t \ge 1\right\}
\end{aligned}
\end{equation}
in the limit as the support of $\chi$ shrinks to $\{0\}$. This is most easily seen by introducing $\widetilde{V}= \partial_t \widetilde{U} - C\partial_x\widetilde{U}$, $\widetilde{W}= \partial_t\widetilde{U} + C\partial_x\widetilde{U}$, which satisfy a system of equations similar to (\ref{eq : generalized system}) with a corresponding cut-off in the initial data and hence are supported in a neighborhood of the lines defined by (\ref{eq:singsupp1}). Furthermore, by Proposition \ref{prop : regularity}, the solution to problem $(\ref{eq : generalized wave with DC in t})$ with initial data $((1-\chi)U_0,(1-\chi)U_1)$ is $\Ginf$-regular off $\{(t,x) \mid t=1,\ x \in \mathbb{R}\}$ for any $\chi$ as above, since $(1-\chi)U_0$ and $(1-\chi)U_1$ belong to $\Ginf(\mathbb{R})$ for any $\chi$ as above. We summarize the result in the following proposition (see also Figure~\ref{fig : upper bound}).
\begin{proposition}\label{prop : propagation_ub}
Let $C$, $U_0$ and $U_1$ be as described above and let $U \in \G([0,\infty) \times \mathbb{R})$ be the solution to problem $(\ref{eq : generalized wave with DC in t})$. Then the set
\begin{equation}\label{eq:singsupp2}
	\begin{array}{l}
	\displaystyle  \{(t,x) \mid t=1,\ x \in \mathbb{R}\} \cup \left\{(t,x) \mid x = \pm \int_0^t c(s)\,ds,\ t \ge 0\right\} \\[10pt]
	\qquad \displaystyle \cup \left\{(t,x) \mid x = \pm\left(2\int_0^1 c(s)\,ds-\int_0^t c(s)\,ds\right),\ t \ge 1\right\}
	\end{array}
\end{equation}
is an upper bound for the $\Ginf$-singular support of the solution $U$ to problem $(\ref{eq : generalized wave with DC in t})$.
\end{proposition}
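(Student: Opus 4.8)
The plan is to exploit the linearity of problem $(\ref{eq : generalized wave with DC in t})$ by splitting the initial data into a piece localized at the point singularity and a globally $\Ginf$-regular piece, treating the two separately, and then using the subadditivity $\singsupp_{\Ginf}(A+B) \subset \singsupp_{\Ginf} A \cup \singsupp_{\Ginf} B$. Concretely, I would fix the cut-off $\chi\in\Cinf_0(\R)$ from the preceding discussion, equal to one near the origin, and write $(U_0,U_1) = (\chi U_0,\chi U_1) + ((1-\chi)U_0,(1-\chi)U_1)$. Letting $\widetilde U$ and $\widehat U$ denote the solutions with these respective data, uniqueness (\cite[Theorem 3.1]{DHO:2013}) gives $U = \widetilde U + \widehat U$, so it suffices to bound the $\Ginf$-singular supports of the two summands.

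For the regular piece, since $\chi\equiv 1$ near $0$ the functions $(1-\chi)U_0$ and $(1-\chi)U_1$ lie in $\Ginf(\R)$, the point singularity at the origin having been removed. Proposition \ref{prop : regularity} then applies directly and yields $\singsupp_{\Ginf}\widehat U \subset \{(t,x)\mid t=1\}$, which is contained in $(\ref{eq:singsupp2})$. This step is essentially immediate once the decomposition is in place.

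For the localized piece I would pass to the diagonalized system. Setting $\widetilde V=\p_t\widetilde U - C\p_x\widetilde U$ and $\widetilde W=\p_t\widetilde U + C\p_x\widetilde U$, one obtains a first-order system of the form $(\ref{eq : generalized system})$ with coupling coefficient $M=C'/(2C)$ and initial data built from $(\chi U_0,\chi U_1)$. The structural fact to exploit is that $M$ has a representative supported in an $O(\eps)$-neighborhood of $\{t=1\}$, since $c_\eps'$ is; hence on $\{t<1-\eps\}$ and $\{t>1+\eps\}$ the system decouples into two scalar transport equations with speeds $\pm c_\eps$. On each such slab $\widetilde V$ is transported along the forward characteristics $\gamma_\eps^{+}$ and $\widetilde W$ along the backward characteristics $\gamma_\eps^{-}$, so from data supported in $\supp\chi$ their supports remain in an $O(\eps+\mathrm{diam}\,\supp\chi)$-neighborhood of the rays $x=\pm\int_0^t c(s)\,ds$ for $t<1$. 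Crossing the transition layer around $t=1$, the coupling term redistributes mass onto both characteristic families, so for $t>1$ the supports lie near the four rays recorded in $(\ref{eq:singsupp1})$. Since $\p_t\widetilde U=(\widetilde V+\widetilde W)/2$ and $C\p_x\widetilde U=(\widetilde W-\widetilde V)/2$ vanish outside $\supp\widetilde V\cup\supp\widetilde W$, the function $\widetilde U$ is locally constant and hence $\Ginf$-regular off these supports; letting $\supp\chi\downarrow\{0\}$ then places $\singsupp_{\Ginf}\widetilde U$ inside $(\ref{eq:singsupp1})$. Combining with the regular piece gives $\singsupp_{\Ginf}U$ contained in the union of $(\ref{eq:singsupp1})$ and $\{t=1\}$, which is exactly $(\ref{eq:singsupp2})$.

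The main obstacle is the support bookkeeping for $(\widetilde V,\widetilde W)$ across the transition layer at $t=1$: one must check that the only new rays created by the coupling are the reflected and transmitted rays dictated by the change of speed from $c_0$ to $c_1$, and that no singularity leaks outside the shrinking $O(\eps)$-neighborhoods as $\eps\downarrow 0$. This is controlled by the integral-equation representation along characteristics (the analogue of $(\ref{eq : integral eq})$) together with finite propagation speed; everything else reduces to linearity and the direct citation of Proposition \ref{prop : regularity}.
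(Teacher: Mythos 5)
Your proposal is correct and follows essentially the same route as the paper: the same cut-off decomposition $(U_0,U_1)=(\chi U_0,\chi U_1)+((1-\chi)U_0,(1-\chi)U_1)$, with Proposition \ref{prop : regularity} handling the $\Ginf$-regular piece and the diagonalized system $(\widetilde V,\widetilde W)$ with coupling $M=C'/(2C)$ giving the support bound near the rays $(\ref{eq:singsupp1})$ for the localized piece, followed by shrinking $\supp\chi$ to $\{0\}$. Your write-up merely spells out in more detail (support of $c_\eps'$, decoupling off the layer $|t-1|\le\eps$, transport along characteristics) what the paper states in compressed form.
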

In the case of delta function initial data, we have shown in \cite{DHO:2013} that the $\Ginf$-singular support of the generalized solution coincides with the set (\ref{eq:singsupp1}). The line $\{t=1\}$ generally may belong to the $\Ginf$-singular support of the solution, even for $\Cinf$-initial data, as shown by the example
\[
   u_\eps(t,x) = \frac{x^2}{2} + \int_0^t\int_0^s (c_\eps(r))^2\,drds,
\]
which solves the wave equation $\partial_t^2u_\eps - c_\eps(t)^2\partial_x^2u_\eps = 0$ with initial data $u_\eps(0,x) = x^2/2$, $\p_tu_\eps(0,x) = 0$. In general, the $\Ginf$-singular support of the generalized solution may be a proper subset of the set given by (\ref{eq:singsupp1}). This depends on the interplay between the initial data and the coefficient $c(t)$.

In the following theorem, we give a necessary and sufficient condition under which initial singularities propagate out and the transmitted rays
\begin{align*}
	\Gamma_{+}:&=\left\{(t,x) \mid x = \int_0^t c(s)\,ds,\ t \ge 0\right\},\\
	\Gamma_{-}:&=\left\{(t,x) \mid x = -\int_0^t c(s)\,ds,\ t \ge 0\right\}
\end{align*}
belong to $\singsupp_{\Ginf} U$.

\begin{figure}[htbp]
\begin{center}
\includegraphics{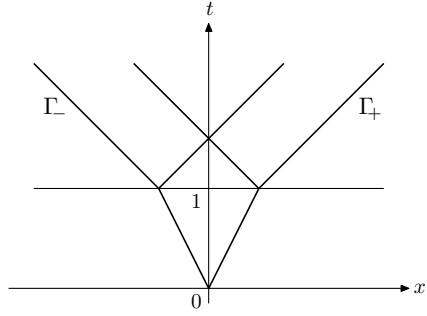}
\end{center}
\caption{An upper bound for the $\Ginf$-singular support of the solution $U$.}
\label{fig : upper bound}
\end{figure}

\begin{theorem}\label{thm : propagation3}
Let $C$, $U_0$ and $U_1$ be as described above and let $U \in \G([0,\infty) \times \mathbb{R})$ be the solution to problem $(\ref{eq : generalized wave with DC in t})$. Then:
\begin{itemize}
\item[{\rm (a)}] if $\singsupp_{\Ginf} (U_1-(C|_{t=0})U_0^{\prime}) =\{0\}$, then
\[
	\Gamma_{+}=\left\{(t,x) \mid x = \int_0^t c(s)\,ds,\ t \ge 0\right\} \subset \singsupp_{\Ginf} U;
\]
\item[{\rm (b)}] if $\singsupp_{\Ginf} (U_1-(C|_{t=0})U_0^{\prime}) =\emptyset$, then $U$ is $\Ginf$-regular along $\Gamma_{+} \setminus \{(1,c_0)\}$;
\item[{\rm (c)}] if $\singsupp_{\Ginf} (U_1+(C|_{t=0})U_0^{\prime}) =\{0\}$, then
\[
	\Gamma_{-}=\left\{(t,x) \mid x = -\int_0^t c(s)\,ds,\ t \ge 0\right\} \subset \singsupp_{\Ginf} U;
\]
\item[{\rm (d)}] if $\singsupp_{\Ginf} (U_1+(C|_{t=0})U_0^{\prime}) =\emptyset$, then $U$ is $\Ginf$-regular along $\Gamma_{-}\setminus \{(1,-c_0)\}$.
\end{itemize}
\end{theorem}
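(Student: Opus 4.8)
The plan is to work throughout with the diagonalized system (\ref{eq : generalized system}) for $V=\partial_t U-C\partial_x U$ and $W=\partial_t U+C\partial_x U$, exactly as in Proposition~\ref{prop : regularity}, and to recover $U$ via $\partial_t U=(V+W)/2$ and $\partial_x U=(W-V)/(2C)$. Since the representative $c_\eps$ is locally constant (equal to $c_0$ or $c_1$) off the strip $\{|t-1|\le\eps\}$, the factor $1/C$ is $\Ginf$-regular there, so away from $\{t=1\}$ one has $\singsupp_{\Ginf}U=\singsupp_{\Ginf}V\cup\singsupp_{\Ginf}W$. Moreover $\mu_\eps=c_\eps'/(2c_\eps)$ is supported in $\{|t-1|\le\eps\}$, whence for small $\eps$ the two equations decouple off this layer: $v_\eps$ is constant along the forward characteristics $\gamma_\eps^{+}$ and $w_\eps$ along the backward ones $\gamma_\eps^{-}$. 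Geometrically, $\Gamma_{+}$ (resp. $\Gamma_{-}$) is precisely the $\gamma^{+}$- (resp. $\gamma^{-}$-) characteristic issued from the origin; the incoming $V$-singularity meets $\{t=1\}$ only at $(1,c_0)$, and the waves reflected into $W$ at $(1,c_0)$ and into $V$ at $(1,-c_0)$ travel along the rays $\{x=c_0-c_1(t-1)\}$ and $\{x=-c_0+c_1(t-1)\}$, which are distinct from $\Gamma_{\pm}$. Because $U_0,U_1\in\Ginf(\R\setminus\{0\})$, the traces $V_0,W_0$ have $\Ginf$-singular support contained in $\{0\}$.

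Parts (b) and (d) follow from regularity propagation. Off the layer the coupling vanishes, so $V$ and $W$ are transported without change along $\gamma^{\pm}_\eps$, while across the layer the estimate (\ref{eq : inequality}) of Proposition~\ref{prop : regularity}, applied locally on a thin slab around $\{t=1\}$, shows that the (bounded) transfer maps $\Ginf$-regular traces to $\Ginf$-regular traces. Consequently $\singsupp_{\Ginf}V$ off $\{t=1\}$ is a union of $\gamma^{+}$-rays emanating from the $\Ginf$-singular points of the traces of $V$ at $t=0$ and at $t=1^{\pm}$. For (b), the hypothesis $V_0\in\Ginf(\R)$ kills the $t=0$ contribution and prevents any reflection into $W$ at $(1,c_0)$; the $t=1^{+}$ contribution to $V$ sits on the $\gamma^{+}$-ray from $(1,-c_0)$, not on $\Gamma_{+}$, and the $W$-singularities live only on $\Gamma_{-}$ and the ray reflected at $(1,-c_0)$. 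Hence both $V$ and $W$ are $\Ginf$-regular along $\Gamma_{+}\setminus\{(1,c_0)\}$, and therefore so is $U$. Statement (d) is obtained by interchanging the roles of $V,W$ and of $\Gamma_{+},\Gamma_{-}$.

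For the lower bounds (a) and (c) I would show that the singularity is genuinely transmitted. Consider (a), with $\singsupp_{\Ginf}V_0=\{0\}$. For $0\le t<1$ the coupling vanishes, so $v_\eps(t,x)=v_{0\eps}(\gamma_\eps^{+}(t,x,0))$ with $\gamma_\eps^{+}$ a $\Ginf$-diffeomorphism of $\Ginf$ Jacobian (here $c_\eps\equiv c_0$); thus the failure of the $\Ginf$-property of $V_0$ at $0$ is transported to a failure along $\Gamma_{+}$, whereas $W$ traces back to $W_0$ at $2c_0t$, away from the origin, and is regular there, so $\partial_x U=(W-V)/(2C)$ is not $\Ginf$ and $\Gamma_{+}\cap\{t<1\}\subset\singsupp_{\Ginf}U$. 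To pass the jump, put $p=V/\sqrt{C}$, $q=W/\sqrt{C}$, which satisfy the symmetric system
\[
(\partial_t+C\partial_x)p=-Mq, \qquad (\partial_t-C\partial_x)q=-Mp .
\]
Integrating this $2\times2$ system along $\gamma^{\pm}_\eps$ across the layer $\{|t-1|\le\eps\}$, where the $x$-transport displaces points by only $O(\eps)$, the transfer from $t=1^{-}$ to $t=1^{+}$ is governed to leading order by $dp/ds=-q$, $dq/ds=-p$ with $s=\int\mu_\eps\,dt\to\kappa:=\tfrac12\log(c_1/c_0)$, i.e. by the matrix $\bigl(\begin{smallmatrix}\cosh\kappa&-\sinh\kappa\\-\sinh\kappa&\cosh\kappa\end{smallmatrix}\bigr)$. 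Since the incoming $q$ is regular at $(1,c_0)$ (it traces back to $W_0(2c_0)$), the transmitted $p$, hence $V$, along $\Gamma_{+}$ carries the incoming singularity multiplied by the factor $\cosh\kappa\ge1$; as $W$ remains regular along $\Gamma_{+}\cap\{t>1\}$, no cancellation occurs and the singularity survives in $U$. This transmission coefficient is positive for all $c_0\ne c_1$, which is why (a) and (c) require no size restriction on the jump. Part (c) is symmetric, tracking the $W$-singularity across $(1,-c_0)$.

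The main obstacle is the rigorous justification of this transfer-matrix computation across the $\eps$-thin interaction layer in the Colombeau framework: one must prove that the non-$\Ginf$ growth of the incoming singularity is preserved—multiplied by a constant bounded away from zero—through the coupled integral equations of type (\ref{eq : integral eq}), rather than being cancelled by the coupling to the regular component. This is precisely the estimate carried out for delta initial data in \cite[Theorem 4.1]{DHO:2013}; the work here is to replace the explicit delta computation by an argument valid for an arbitrary $\Ginf$-point singularity, exploiting that the two families of characteristics stay $O(\eps)$-close through the layer and that the corrections to the leading transfer matrix are $\Neg$-negligible, so that the $\eps^{-N}$-growth of some fixed derivative is transported intact onto $\Gamma_{+}$ (resp. $\Gamma_{-}$).
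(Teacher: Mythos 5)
Your handling of (b) and (d) is sound and matches the paper: cut-off decomposition of the data, the uniform estimate (\ref{eq : inequality}) from Proposition \ref{prop : regularity} (whose exponential factor is bounded since $\int_0^T|\mu_\eps|\,ds$ is controlled by the variation of $\log c_\eps$), recovery of $t$-derivatives from the equations off the layer, and the geometric fact that the rays produced at $(1,\pm c_0)$ miss $\Gamma_\pm$. The genuine gap is in (a) and (c), and you name it yourself: the ``main obstacle'' you defer --- justifying the transfer-matrix computation across the $\eps$-thin layer --- is not a technicality but the entire content of the lower bound. Moreover, the route you sketch would not go through as stated: inside the layer the two characteristic families separate by $O(\eps)$, while the singular parts of $p,q$ vary on scales at least as fine as $\eps$ (their derivatives grow like $\eps^{-N-1}$ when the functions are of size $\eps^{-N}$). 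Hence the corrections to your leading-order ODE $dp/ds=-q$, $dq/ds=-p$ are of size $(\mbox{displacement})\times(\mbox{spatial derivative}) = O(\eps)\cdot O(\eps^{-N-1}) = O(\eps^{-N})$, i.e.\ of the \emph{same} order as the terms you keep --- they are not $\Neg$-negligible, so ``the singularity is transported intact multiplied by $\cosh\kappa$'' is not established, and cancellation by the coupled component is not excluded.

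The paper bypasses the layer entirely with an exact global identity that your proposal is missing. For the piece with data $(V_0,W_0)=(\chi V_0,0)$, using that $\mu_\eps$ depends on $t$ only, differentiate the system $k$ times in $x$, multiply by $2\p_x^k v_\eps$ (resp.\ $2\p_x^k w_\eps$), and integrate in $x$; the cross terms in the two identities coincide, so
\[
\frac{d}{dt}\Bigl(\|\p_x^k v_\eps(t)\|^2_{L^2}-\|\p_x^k w_\eps(t)\|^2_{L^2}\Bigr)
   = 2\mu_\eps(t)\Bigl(\|\p_x^k v_\eps(t)\|^2_{L^2}-\|\p_x^k w_\eps(t)\|^2_{L^2}\Bigr),
\]
which integrates exactly (note $\exp(2\int_0^t\mu_\eps\,ds)=c_\eps(t)/c_\eps(0)$) to
\[
\|\p_x^k v_\eps(t)\|^2_{L^2} \ \ge\ \frac{c_\eps(t)}{c_\eps(0)}\,\|\p_x^k v_{0\eps}\|^2_{L^2}
   \ \ge\ \min\Bigl(1,\frac{c_1}{c_0}\Bigr)\,\|\p_x^k v_{0\eps}\|^2_{L^2}
\]
for every $k$ and every $t>0$, with $w_{0\eps}\equiv 0$. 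This inequality crosses the jump with no approximation at all, and is precisely the device that rules out the cancellation you worry about: by finite propagation speed (compact supports, uniformly in $\eps$) and Sobolev embedding it yields $\|\p_x^k v_{0\eps}\|^2_{L^\infty}\le C\sum_{\ell\le k+1}\|\p_x^\ell v_\eps(t,\cdot)\|^2_{L^\infty}$, so failure of the $\Ginf$-estimates for $V_0$ at the origin forces failure of the $\Ginf$-estimates for $V$ on every line $\{t\}\times\R$; since $\supp V$ shrinks to $\Gamma_+$ with the support of $\chi$, and the remaining pieces of the solution are regular near $\Gamma_+\setminus\{(1,c_0)\}$ by the (b)-type arguments, the singularity lies on $\Gamma_+$. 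Without this identity (or a completed, genuinely quantitative layer analysis replacing the delta computation of \cite[Theorem 4.1]{DHO:2013}), parts (a) and (c) remain unproven in your proposal.
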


\begin{proof}
To show that (a) and (b) hold, we consider the solution $(V,W)$ to problem $(\ref{eq : generalized system})$ with $(V_0,W_0) = (\chi (U_1 - (C|_{t=0}) U_0^{\prime}),0)$. As may be seen from the proof of Proposition \ref{prop : regularity}, if $\singsupp_{\Ginf} (U_1-(C|_{t=0})U_0^{\prime}) =\emptyset$, then $V$ is $\Ginf$-regular outside $\{(t,x) \mid t=1,\ x \in \mathbb{R}\}$ for any $\chi$. From this, case (b) follows. As for (a), it suffices to show that if $\singsupp_{\Ginf} (U_1-(C|_{t=0})U_0^{\prime}) =\{0\}$, then
\begin{equation}\label{eq : singsupp of V}
	\singsupp_{\Ginf} V = \Gamma_+
\end{equation}
holds in the limit as the support of $\chi$ shrinks to $\{0\}$. It is easy to see that $\supp V \subset \Gamma_+$ holds as the support of $\chi$ shrinks to $\{0\}$. Hence, if we show that for any $\chi$, $V$ is not $\Ginf$-regular on $\mathbb{R}$ for any $t > 0$, then $(\ref{eq : singsupp of V})$ will follow. We take $(v_{\varepsilon},w_{\varepsilon})_{\varepsilon \in (0,1]}$ satisfying problem $(\ref{eq : representative})$ with $(v_{0\varepsilon},w_{0\varepsilon}) = (\chi(u_{1\varepsilon} - c_{\varepsilon}(0)u_{0\varepsilon}^{\prime}),0)$. Differentiate the first equation of system $(\ref{eq : representative})$ in $x$, $k$ times, multiply by $2(\partial_x^{k}v_{\varepsilon})$ and integrate to obtain
\[
	\dfrac{d}{dt} \int_{-\infty}^{\infty} (\partial_x^{k}v_{\varepsilon})^2\,dx = 2\mu_{\varepsilon}(t) \int_{-\infty}^{\infty} (\partial_x^{k}v_{\varepsilon})^2\,dx - 2\mu_{\varepsilon}(t) \int_{-\infty}^{\infty} (\partial_x^{k}v_{\varepsilon})(\partial_x^{k}w_{\varepsilon})\,dx.
\]
Similarly
\[
	\dfrac{d}{dt} \int_{-\infty}^{\infty} (\partial_x^{k}w_{\varepsilon})^2\,dx = 2\mu_{\varepsilon}(t) \int_{-\infty}^{\infty} (\partial_x^{k}w_{\varepsilon})^2\,dx - 2\mu_{\varepsilon}(t) \int_{-\infty}^{\infty} (\partial_x^{k}v_{\varepsilon})(\partial_x^{k}w_{\varepsilon})\,dx.
\]
Take their difference and integrate to get
\begin{align*}
	& \int_{-\infty}^{\infty} \bigl((\partial_x^{k}v_{\varepsilon})^2 - (\partial_x^{k}w_{\varepsilon})^2\bigr)\,dx \\
	& \hspace{30pt} = \int_{-\infty}^{\infty} \bigl((\p_x^k v_{0\varepsilon})^2 - (\p_x^k w_{0\varepsilon})^2\bigr)dx \cdot \exp\left(2\int_0^t \mu_{\varepsilon}(s)\,ds\right).
\end{align*}
Since $w_{0\varepsilon} \equiv 0$ and $\mu_{\varepsilon} = c_{\varepsilon}^{\prime}/(2c_{\varepsilon})$, it follows that
\begin{equation}\label{eq : inequality1}
	\int_{-\infty}^{\infty} (\partial_x^{k}v_{\varepsilon})^2\,dx \ge \dfrac{c_{\varepsilon}(t)}{c_{\varepsilon}(0)}\int_{-\infty}^{\infty} (\p_x^k v_{0\varepsilon})^2dx.
\end{equation}
Note that $v_{0\varepsilon}$ vanishes outside some compact set, independently of $\varepsilon \in (0,1]$. Then it is clear from finite propagation speed that, for some constant $C_1 > 0$,
\begin{equation}\label{eq : inequality2}
	\int_{-\infty}^{\infty} (\partial_x^{k}v_{\varepsilon})^2\,dx \le C_1 \|\partial_x^{k}v_{\varepsilon}\|_{L^{\infty}(\mathbb{R})}^2.
\end{equation}
The Sobolev embedding theorem yields that, for some constant $C_2 > 0$,
\begin{equation}\label{eq : inequality3}
	\| \p_x^k v_{0\varepsilon}\|_{L^{\infty}(\mathbb{R})}^2
	\le C_2 \sum_{\ell \le k+1} \|\p_x^{\ell}v_{0\varepsilon}\|_{L^{2}(\mathbb{R})}^2.
\end{equation}
Combining $(\ref{eq : inequality1})$-$(\ref{eq : inequality3})$, we get, for some constant $C_3 > 0$,
\begin{equation}\label{eq : inequality4}
	\|\p_x^k v_{0\varepsilon}\|_{L^{\infty}(\mathbb{R})}^2
	\le C_3 \sum_{\ell \le k+1} \|\partial_x^{\ell}v_{\varepsilon}\|_{L^{\infty}(\mathbb{R})}^2.
\end{equation}
By assumption, $V_0 = \chi (U_1 - (C|_{t=0}) U_0^{\prime})$ is not $\Ginf$-regular on $\mathbb{R}$ for any $\chi$. This and inequality $(\ref{eq : inequality4})$ imply that for any $\chi$, $V$ is not $\Ginf$-regular on $\mathbb{R}$ for any $t > 0$. Thus $(\ref{eq : singsupp of V})$ follows. Cases (c) and (d) can be argued similarly.
\end{proof}

When the coefficient $C$ in problem $(\ref{eq : generalized wave with DC in t})$ is $\Ginf$-regular, the same phenomenon as in Subsection\;\ref{subsec : 4.1} occurs: singularities can only be produced by the initial data, and there are no refracted rays, as will be seen next. Again, we use a mollifier $\varphi_{h(\eps)}$, where $h(\eps) > 0$ and $(1/h(\eps))_{\varepsilon \in (0,1]}$ is a slow scale net to regularize $c(t)$. Then the corresponding element $C \in \G(\mathbb{R})$ belongs to $\Ginf(\mathbb{R})$. First, we have the following global regularity result, which can be proven similarly to the proof of Proposition \ref{prop : regularity}.

\begin{proposition}\label{prop : regularity2}
Let $C$ be as described above and let $U \in \G([0,\infty) \times \mathbb{R})$ be the solution to problem $(\ref{eq : generalized wave with DC in t})$ with initial data $U_0$, $U_1 \in \G(\mathbb{R})$. If $C$, $U_0$, $U_1 \in \Ginf(\mathbb{R})$, then $U \in \Ginf([0,\infty) \times \mathbb{R})$.
\end{proposition}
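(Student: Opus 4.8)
The plan is to follow the proof of Proposition \ref{prop : regularity} verbatim for the $x$-derivatives and then to upgrade the argument so that it also controls $t$- and mixed derivatives, exploiting that now $C$ is $\Ginf$-regular. First I would rewrite problem (\ref{eq : generalized wave with DC in t}) as the first-order system (\ref{eq : generalized system}) for $V=\p_t U-C\p_x U$ and $W=\p_t U+C\p_x U$, with $M=C'/(2C)$. Since $C$ is of bounded type, is bounded away from zero (being the regularization of a strictly positive piecewise constant function), and is $\Ginf$-regular, the characterization in \cite{HOP:06} guarantees that all derivatives of $c_\eps$ are of slow scale type; in particular $M\in\Ginf(\R)$, and $\int_0^T|\mu_\eps(s)|\,ds=\int_0^T|c_\eps'(s)|/(2c_\eps(s))\,ds$ stays bounded as $\eps\downarrow 0$, exactly as in Proposition \ref{prop : regularity} (here $\int|c_\eps'|=|c_1-c_0|$ and $c_\eps$ is bounded below). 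Passing to representatives $(v_\eps,w_\eps)$ of (\ref{eq : representative}), integrating along the characteristics $\gamma_\eps^\pm$ and running the Gronwall estimate on the trapezoidal regions $K_T$, I obtain, after differentiating the system in $x$, the bound (\ref{eq : inequality}); because $U_0,U_1\in\Ginf(\R)$ there is a single exponent $N_1$ with $\sup_{K_T}\bigl(|\p_x^\alpha v_\eps|+|\p_x^\alpha w_\eps|\bigr)=O(\eps^{-N_1})$ for every $\alpha$.

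Next I would transfer these bounds to $u_\eps$ itself. From $\p_t u_\eps=(v_\eps+w_\eps)/2$ and $\p_x u_\eps=(w_\eps-v_\eps)/(2c_\eps)$, and using that $c_\eps$ depends on $t$ only and is bounded below, the pure $x$-derivatives $\p_x^\alpha u_\eps$ and the first-order mixed derivatives $\p_t\p_x^\beta u_\eps$ inherit the uniform bound $O(\eps^{-N_1})$ on $K_T$. The remaining derivatives $\p_t^m\p_x^\alpha u_\eps$ with $m\ge 2$ I would reduce by repeatedly substituting the equation in the form $\p_t^2 u_\eps=c_\eps^2\,\p_x^2 u_\eps$. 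Expanding by the Leibniz rule and using that $c_\eps$ is independent of $x$, this writes $\p_t^m\p_x^\alpha u_\eps$ as a finite sum of terms, each a product of finitely many $t$-derivatives of $c_\eps$ (of total order at most $m$) times a pure $x$-derivative of $u_\eps$.

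The decisive point is then that the exponent must not deteriorate as $m$ grows. Writing $s_\eps=1/h(\eps)$ for the underlying slow scale net, one has $\sup|c_\eps^{(k)}|=O(s_\eps^{k})$ for each $k$, so every coefficient product appearing in the reduction is $O(s_\eps^{P})$ for some finite power $P=P(m)$. By the defining property of a slow scale net, $s_\eps^{P}=O(\eps^{-1})$ for each fixed $P$, with a constant that may depend on $m$ but an exponent that does not. Combining this with the uniform $O(\eps^{-N_1})$ bounds on the pure $x$-derivatives of $u_\eps$, I conclude $\sup_{K_T}|\p_t^m\p_x^\alpha u_\eps|=O(\eps^{-1-N_1})$ for all $m,\alpha$, with the single exponent $1+N_1$. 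Since every compact subset of $[0,\infty)\times\R$ is contained in some $K_T$, this establishes $U\in\Ginf([0,\infty)\times\R)$.

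The main obstacle is precisely this uniformity of the exponent in the number of $t$-derivatives, and it is here that the $\Ginf$-regularity (equivalently, the slow scale character) of the coefficient is indispensable. Had $C$ been regularized by a standard mollifier as in Proposition \ref{prop : regularity}, the $j$-th $t$-derivative of $c_\eps^2$ would be of order $\eps^{-j}$ rather than of uniformly bounded order, so the reduction of $\p_t^m$ would produce an exponent growing linearly with $m$, and only moderate ($\EM$-type) estimates would survive; $\Ginf$-regularity could then be recovered only away from $\{t=1\}$, where $c_\eps$ is locally constant and no such factors arise, which is exactly the content of Proposition \ref{prop : regularity}. All remaining verifications -- the splitting into characteristics, the iteration for the integral equations, and the localization via $K_T$ -- go through unchanged from the cited proof and are routine.
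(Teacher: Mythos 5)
Your proposal is correct and takes essentially the same route as the paper, whose own proof consists only of the remark that the statement ``can be proven similarly to the proof of Proposition~\ref{prop : regularity}'': you reproduce that proof's system/Gronwall argument for the $x$-derivatives and then supply the one genuinely new ingredient, namely that the slow-scale bounds $\sup|c_\eps^{(k)}|=O(s_\eps^k)$ with $s_\eps^P=O(\eps^{-1})$ for every fixed $P$ keep the exponent uniform when $t$-derivatives are reduced to $x$-derivatives near $t=1$, which is exactly what replaces the local constancy of $c_\eps$ used off $\{t=1\}$ in Proposition~\ref{prop : regularity}.
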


Turning to propagation of a point singularity, we take $U_0$, $U_1 \in \Ginf(\mathbb{R}\setminus \{0\})$ as in Theorem $\ref{thm : propagation3}$. Again, the cut-off argument employed in the derivation of Proposition\;\ref{prop : propagation_ub} yields that the $\Ginf$-singular support of the solution $U$ to problem $(\ref{eq : generalized wave with DC in t})$ is contained in the set given by (\ref{eq:singsupp1}), noting that no singularity is present along the line $\{t=1\}$.

Next, the same argument as in the proof of Proposition 5.1 of \cite{DHO:2013} shows that the refracted rays
\[
	\left\{(t,x) \mid x = \pm\left(2\int_0^1 c(s)\,ds-\int_0^t c(s)\,ds\right),\ t \ge 1\right\}
\]
do not belong to the $\Ginf$-singular support of the solution $U$, and so the singularities can occur only along the transmitted rays (see Figure \ref{fig : singsupp of U3})
\begin{equation}\label{eq:singsupp3}
	\left\{(t,x) \mid x = \pm\int_0^t c(s)\,ds,\ t \ge 0\right\}.
\end{equation}

\begin{proposition}\label{prop : propagation_ub_reg}
Let $C$, $U_0$ and $U_1$ be as described above and let $U \in \G([0,\infty) \times \mathbb{R})$ be the solution to problem $(\ref{eq : generalized wave with DC in t})$. Then the set
$(\ref{eq:singsupp3})$ is an upper bound for the $\Ginf$-singular support of the solution $U$ to problem $(\ref{eq : generalized wave with DC in t})$.
\end{proposition}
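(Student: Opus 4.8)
The assertion is the one-sided bound $\singsupp_{\Ginf} U \subset$ (\ref{eq:singsupp3}); the matching lower bound is not claimed here. The plan is to combine the cut-off decomposition used for Proposition~\ref{prop : propagation_ub} with the global regularity of Proposition~\ref{prop : regularity2} and a slow-scale \emph{no-reflection} argument modeled on Theorem~\ref{thm : propagation2}. First I would split $(U_0,U_1) = (\chi U_0, \chi U_1) + ((1-\chi)U_0, (1-\chi)U_1)$ with $\chi \in \Cinf_0(\R)$ identically one near the origin. Because $U_0, U_1 \in \Ginf(\R\setminus\{0\})$, the data $((1-\chi)U_0,(1-\chi)U_1)$ lie in $\Ginf(\R)$; since $C$ is now $\Ginf$-regular on $\R$, Proposition~\ref{prop : regularity2} shows that the corresponding solution is globally $\Ginf$-regular and contributes nothing to $\singsupp_{\Ginf} U$. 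It therefore suffices to analyze the solution $\widetilde U$ with compactly supported data $(\chi U_0,\chi U_1)$ concentrated near the origin.

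Passing to the diagonalized system (\ref{eq : generalized system}) for $\widetilde V = \partial_t \widetilde U - C\partial_x\widetilde U$ and $\widetilde W = \partial_t\widetilde U + C\partial_x\widetilde U$, I would run the finite-propagation-speed and support analysis exactly as for Proposition~\ref{prop : propagation_ub}. In the limit as $\supp\chi$ shrinks to $\{0\}$ this confines $\singsupp_{\Ginf}\widetilde U$ to a neighborhood of the four rays emanating from the origin: the two transmitted rays in (\ref{eq:singsupp3}) together with the two refracted rays issuing from the crossing points $(1,\pm c_0)$. Since $C$ is $\Ginf$-regular, no singularity is produced along $\{t=1\}$, so the bound obtained is (\ref{eq:singsupp1}) and not (\ref{eq:singsupp2}).

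It remains to discard the refracted rays, and this is the crux. The coupling coefficient $M = C'/(2C)$ is now of slow-scale type: for the regularization $\varphi_{h(\eps)}$ one has $\|c_\eps^{(k)}\|_{L^\infty} = O(h(\eps)^{-k})$, and $(1/h(\eps))_{\eps}$ being a slow-scale net (together with $c_\eps$ bounded below) forces each $\|\partial_t^k M_\eps\|_{L^\infty}$ to be a slow-scale net as well. By the $x\mapsto -x$ symmetry it suffices to treat the right-moving transmitted ray $\Gamma_+$, carried by $\widetilde V$ along $\partial_t + C\partial_x$; its reflection at $(1,c_0)$ would show up as a singularity of $\widetilde W$ along the refracted ray $\{x = c_0+c_1-c_1t,\ t\ge 1\}$. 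In a neighborhood of this refracted ray, bounded away from $(1,c_0)$ and from $\Gamma_+$, the field $\widetilde V$ is $\Ginf$-regular (its singularities lie on $\Gamma_+$). Applying the commutator estimates of \cite{DHO:2013}, Proposition~5.1, to the $\widetilde W$-equation, whose only source there is the slow-scale term $M(\widetilde W-\widetilde V)$, one bounds all characteristic derivatives $(\partial_t - C\partial_x)^k\widetilde W$ in terms of lower-order quantities multiplied by at most $k$ factors of $M$. Because any fixed power of a slow-scale net is again $O(\eps^{-1})$ — the mechanism already exploited in the proof of Corollary~\ref{cor:conswave} — these factors do not spoil the uniform-in-$k$ order of growth, so $\widetilde W$ is $\Ginf$-regular near the refracted ray. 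The left-moving case is identical. Hence the refracted rays drop out of (\ref{eq:singsupp1}), leaving precisely (\ref{eq:singsupp3}).

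The main obstacle is this last step: making the commutator bookkeeping precise, so that each of the $k$ couplings across the adiabatic transition layer at $t=1$ costs only a slow-scale factor and these accumulate into a single $O(\eps^{-1})$ prefactor independent of the differentiation order. Everything preceding it is the routine support and finite-speed analysis already carried out for Proposition~\ref{prop : propagation_ub}.
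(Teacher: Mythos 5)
Your proposal is correct and follows essentially the same route as the paper: the paper likewise combines the cut-off decomposition from Proposition~\ref{prop : propagation_ub} with the global regularity of Proposition~\ref{prop : regularity2} (which removes the line $\{t=1\}$ from the bound), and then discards the refracted rays by invoking the slow-scale commutator argument of Proposition~5.1 of \cite{DHO:2013}. The additional bookkeeping you sketch for that last step (each coupling costing only a slow-scale factor, hence a uniform-in-order $O(\eps^{-1})$ prefactor) is exactly the mechanism the paper relies on, both here and in the analogous step of Theorem~\ref{thm : propagation2}.
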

As for the question whether the singularities actually propagate out, the same argument as in the proof of Theorem \ref{thm : propagation3} can be applied and thus the following theorem holds.

\begin{figure}[htbp]
\begin{center}
\includegraphics{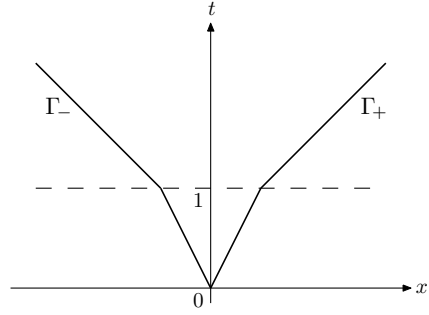}
\end{center}
\caption{An upper bound for the $\Ginf$-singular support of the solution $U$ with slow scale coefficient.}
\label{fig : singsupp of U3}
\end{figure}

\begin{theorem}\label{thm : propagation4}
Let $C$, $U_0$ and $U_1$ be as described above and let $U \in \G([0,\infty) \times \mathbb{R})$ be the solution to problem $(\ref{eq : generalized wave with DC in t})$. Let $C \in \Ginf(\R)$. Then:
\begin{itemize}
\item[{\rm (a)}] if $\singsupp_{\Ginf} (U_1-(C|_{t=0})U_0^{\prime}) =\{0\}$, then
\[
	\Gamma_{+} = \left\{(t,x) \mid x = \int_0^t c(s)\,ds,\ t \ge 0\right\} \subset \singsupp_{\Ginf} U;
\]
\item[{\rm (b)}] if $\singsupp_{\Ginf} (U_1-(C|_{t=0})U_0^{\prime}) =\emptyset$, then $U$ is $\Ginf$-regular along $\Gamma_{+}$;
\item[{\rm (c)}] if $\singsupp_{\Ginf} (U_1+(C|_{t=0})U_0^{\prime}) =\{0\}$, then
\[
	\Gamma_{-} = \left\{(t,x) \mid x = -\int_0^t c(s)\,ds,\ t \ge 0\right\} \subset \singsupp_{\Ginf} U;
\]
\item[{\rm (d)}] if $\singsupp_{\Ginf} (U_1+(C|_{t=0})U_0^{\prime}) =\emptyset$, then $U$ is $\Ginf$-regular along $\Gamma_{-}$.
\end{itemize}
\end{theorem}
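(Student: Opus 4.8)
The plan is to follow the proof of Theorem~\ref{thm : propagation3} line by line, the only structural change being that now $C\in\Ginf(\R)$, so that the coupling coefficient $M=C'/(2C)$ of the first-order system~(\ref{eq : generalized system}) is itself $\Ginf$-regular and, in particular, carries no singularity at $t=1$. First I would pass to the diagonal system for $V=\partial_tU-C\partial_xU$ and $W=\partial_tU+C\partial_xU$, split the data by a cut-off $\chi$ that is identically one near the origin (as in Proposition~\ref{prop : propagation_ub}), and record that the transmitted rays $\Gamma_+$ and $\Gamma_-$ meet only at the origin. Consequently the singularity travelling along $\Gamma_+$ is governed by the right-moving variable $V$ (with data $(\chi(U_1-(C|_{t=0})U_0'),0)$) and that along $\Gamma_-$ by $W$; cases (c),(d) are the mirror images of (a),(b) and need no separate treatment.

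For the lower bounds (a) and (c) I would reuse the energy computation of Theorem~\ref{thm : propagation3} verbatim. Differentiating the $V$-equation $k$ times in $x$, multiplying by $2\partial_x^k v_\eps$, subtracting the analogous $W$-identity, and using $w_{0\eps}\equiv0$ together with $\mu_\eps=c_\eps'/(2c_\eps)$, one arrives at the inequality~(\ref{eq : inequality1}),
\[
   \int_{-\infty}^\infty(\partial_x^k v_\eps)^2\,dx\;\ge\;\frac{c_\eps(t)}{c_\eps(0)}\int_{-\infty}^\infty(\partial_x^k v_{0\eps})^2\,dx .
\]
The one thing to verify is that the prefactor stays useful under slow-scale regularization, and it does: $c_\eps$ is a mollification of the bounded function $c$, so $c_\eps(t)/c_\eps(0)\ge\min(c_0,c_1)/\max(c_0,c_1)>0$ uniformly in $\eps$. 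Feeding this into the finite-propagation-speed bound~(\ref{eq : inequality2}) and the Sobolev estimate, exactly as in~(\ref{eq : inequality4}), the hypothesis $\singsupp_{\Ginf}(U_1-(C|_{t=0})U_0')=\{0\}$ forces $V$ to violate the $\Ginf$-property on all of $\R$ for every $t>0$, so that $\singsupp_{\Ginf}V=\Gamma_+$. Since now $C\in\Ginf$ and $C$ is bounded away from zero, $\Ginf$-regularity of $U$ at a point would entail $\Ginf$-regularity of $V=\partial_tU-C\partial_xU$ there; hence $\singsupp_{\Ginf}V\subset\singsupp_{\Ginf}U$ and $\Gamma_+\subset\singsupp_{\Ginf}U$, including the crossing point $(1,c_0)$.

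For the upper bounds (b) and (d) I would replace the off-$\{t=1\}$ estimate of Theorem~\ref{thm : propagation3} by the global regularity mechanism of Proposition~\ref{prop : regularity2}. If $\singsupp_{\Ginf}(U_1-(C|_{t=0})U_0')=\emptyset$, then the exponential factor $\exp(2\int_0^T|\mu_\eps|\,ds)$ in~(\ref{eq : inequality}) is bounded uniformly in $\eps$ (because $\|c_\eps'\|_{L^1}=|c_1-c_0|$ is independent of the regularization scale), and together with $M\in\Ginf$ this yields that the $V$-component is $\Ginf$-regular on all of $[0,\infty)\times\R$, not merely off $\{t=1\}$. At the same time $W$ is $\Ginf$-regular in a neighbourhood of $\Gamma_+$ for $t>0$, its only possible singularity lying on the disjoint ray $\Gamma_-$. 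Thus $\partial_tU=(V+W)/2$ and $C\partial_xU=(W-V)/2$ are $\Ginf$-regular there, and, $C$ being $\Ginf$ and bounded below, so are $\partial_tU$, $\partial_xU$, and therefore $U$ itself. This gives regularity along the whole of $\Gamma_+$.

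The main obstacle---indeed the only genuinely new point compared with Theorem~\ref{thm : propagation3}---is the inclusion of the crossing point $(1,\pm c_0)$, which had to be excised there because the coefficient was singular at $t=1$. Here it is precisely the $\Ginf$-regularity of $C$, funnelled through Proposition~\ref{prop : regularity2}, that suppresses any contribution of the coefficient to the singular support at $t=1$ and allows the regularity statements in (b) and (d) to extend across that point.
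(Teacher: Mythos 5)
Your proposal is correct and takes essentially the same approach as the paper: the paper's own proof of Theorem \ref{thm : propagation4} consists precisely of rerunning the energy/Sobolev argument of Theorem \ref{thm : propagation3} for the lower bounds (a), (c), while Proposition \ref{prop : regularity2} supplies the global regularity across $t=1$ and Proposition \ref{prop : propagation_ub_reg} removes the refracted rays for the upper bounds (b), (d) --- exactly the two substitutions you make, and your explicit checks that $c_\eps(t)/c_\eps(0)\ge \min(c_0,c_1)/\max(c_0,c_1)$ and that $\exp\bigl(2\int_0^T|\mu_\eps(s)|\,ds\bigr)$ remain uniformly bounded under slow-scale mollification are the right points on which the transfer hinges. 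The only reading caution is that your claim that ``the $V$-component is $\Ginf$-regular on all of $[0,\infty)\times\R$'' is justified via (\ref{eq : inequality}) only for the decoupled piece with zero $W$-data (since that inequality controls both components in terms of both data), which is what your initial splitting provides, so the argument stands as intended.
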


\subsection{The time-dependent case in multiple dimensions}

We here consider problem $(\ref{eq : wave with DC in t})$ in multiple dimensions:
\[
	\begin{array}{l}
		\partial_t^2u - c(t)^2\Delta u = 0, \quad t \in [0,T], \ x \in \mathbb{R}^d, \vspace{4pt}\\
		u|_{t = 0} = u_{0},\quad \partial_tu|_{t = 0} = u_{1}, \quad x \in \mathbb{R}^d.
	\end{array}
\]
As in the one-dimensional case,
\[
	c(t) = c_0 + (c_1 - c_0)H(t-1),
\]
where $c_0$, $c_1 > 0$, $c_0 \ne c_1$ and $H$ is the Heaviside function. Specifically we take
\[
 	u_0 \equiv 0, \qquad u_1 = \delta,
\]
where $\delta$ is the delta function. We define a generalized function $C \in \cG_{\infty,2}[0,\infty)$ by means of a representative $(c_{\varepsilon})_{\varepsilon \in (0,1]} = (c \ast \varphi_{\varepsilon})_{\varepsilon \in (0,1]}$ with a mollifier $\varphi_{\varepsilon}$ as in Example 1%\ref{ex : delta}
. We also produce $U_0 \equiv 0$ and $U_1 \in \G_{2,2}(\mathbb{R}^d)$ as the class of $(\varphi_{\varepsilon}(|x|))_{\varepsilon \in (0,1]}$, where $\varphi_{\varepsilon}$ again is a mollifier as in Example 1%\ref{ex : delta}
. (Actually, the mollifier need not be the same as the one chosen for the regularization of the coefficient $c$.)
Thus we will consider
\begin{equation}\label{eq : generalized multi-dimensional wave}
	\begin{array}{lr}
		\partial_t^2U - C^2\Delta U = 0 & \mbox{in}\ \G_{2,2}([0,T]\times\mathbb{R}^d), \vspace{4pt}\\
		U|_{t = 0} = U_0 \equiv 0,\quad \partial_tU|_{t = 0} = U_1 & \mbox{in}\ \G_{2,2}(\mathbb{R}^d).
	\end{array}
\end{equation}
By Theorem \ref{thm:timewave}, problem $(\ref{eq : generalized multi-dimensional wave})$ has a unique solution $U \in \G_{2,2}([0,T] \times \mathbb{R}^d)$ for any $T > 0$.

We begin with the odd-dimensional case. It was shown in \cite[Theorem 4.1]{DHO:2013} that, in the one-dimensional case $(d = 1)$, it holds that
\begin{equation}\label{eq : one dimensional case}
	\begin{split}
		\singsupp_{\Ginf} U
		& = \left\{(t,x) \mid |x|=\int_0^t c(s)\,ds,\ 0 \le t \le T\right\} \\
		&\quad \cup \left\{(t,x) \mid |x|=\left|2\int_0^1 c(s)\,ds - \int_0^t c(s)\,ds\right|,\ 1 \le t \le T\right\}.
	\end{split}
\end{equation}
We are going to show that the same holds for all odd $d \ge 3$.

\begin{theorem}\label{thm : odd dimensional case}
Let $d \ge 3$ be odd. Assume that $C$, $U_0$ and $U_1$ are as described above and further that $U \in \G_{2,2}([0,T] \times \mathbb{R}^d)$ is the solution to problem $(\ref{eq : generalized multi-dimensional wave})$. Then the $\Ginf$-singular support of $U$ coincides with the set given by $(\ref{eq : one dimensional case})$. The support of $U$ also coincides with  the set given by $(\ref{eq : one dimensional case})$.

\end{theorem}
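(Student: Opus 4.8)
The plan is to reduce the rotationally symmetric $d$-dimensional problem to the one-dimensional problem treated in the preceding subsection by means of the classical odd-dimensional (Euler--Poisson--Darboux) reduction, which is the reason for the restriction to odd $d$. Since $c$ depends only on $t$ and the data are radial, one may choose a rotationally symmetric representative $u_\eps(t,x)=U_\eps(t,|x|)$, where $U_\eps(t,r)$ solves the radial equation $\partial_t^2U_\eps=c_\eps(t)^2\big(\partial_r^2+\tfrac{d-1}{r}\partial_r\big)U_\eps$ on $r>0$. Writing $d=2k+1$ and putting
\[
  w_\eps(t,r)=\Big(\tfrac{1}{r}\partial_r\Big)^{k-1}\big(r^{2k-1}U_\eps(t,r)\big),
\]
a direct computation shows that $w_\eps$ solves the one-dimensional wave equation $\partial_t^2w_\eps=c_\eps(t)^2\partial_r^2w_\eps$; the spatial part of the transformation does not involve $t$, so the coefficient $c_\eps(t)^2$ is unaffected. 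Moreover $w_\eps$ is odd in $r$, hence extends to an odd solution on all of $\R$, with initial data $w_\eps|_{t=0}=0$ and $\partial_tw_\eps|_{t=0}=\big(\tfrac1r\partial_r\big)^{k-1}\big(r^{2k-1}\varphi_\eps(r)\big)$, an odd net supported in $\{|r|\le\eps\}$, of vanishing integral, and with $\Ginf$-singular support $\{0\}$.

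First I would prove the support statement, which in view of $\singsupp_{\Ginf}U\subset\supp U$ simultaneously delivers the upper bound for the singular support. In the regions where $c_\eps$ is constant, the traveling-wave representation together with the zero mean of the initial velocity confines $w_\eps$ to thin strips of width $O(\eps)$ about the characteristic curves $r=\pm\int_0^tc(s)\,ds$; finite propagation speed keeps the pulses thin across the $O(\eps)$-wide transition zone near $t=1$, where each splits into a transmitted and a reflected pulse, the latter following $r=\pm\big(2\int_0^1c-\int_0^tc\big)$. Inverting the Darboux transformation recovers $U_\eps$ from $w_\eps$ by iterated integrations $r\mapsto\int_0^rs\,(\cdot)\,ds$ followed by multiplication by $r^{1-2k}$; since $w_\eps$ and all intermediate functions have compact $r$-support by finite propagation, the integration constants at $r=\infty$ vanish, so the thin strips are transported to $U_\eps$ without being filled in. This is precisely the strong Huygens phenomenon available in odd dimensions, and it yields $\supp U$ equal to the union of the sets in $(\ref{eq : one dimensional case})$, off which $U$ vanishes and is in particular $\Ginf$-regular.

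For the lower bound I would transfer the one-dimensional singularity analysis. By the results of the preceding subsection (Theorem~\ref{thm : propagation3} for the transmitted rays, and the reflection argument at the temporal jump as in \cite[Theorem 4.1]{DHO:2013} applied to the data $(0,\partial_tw_\eps|_{t=0})$, whose $\Ginf$-singular support is $\{0\}$) both the transmitted and the refracted characteristic curves lie in $\singsupp_{\Ginf}w_\eps$; no smallness of the jump is required, since a purely temporal discontinuity splits each pulse with nonzero transmission and reflection coefficients $\tfrac12(1\pm c_0/c_1)$. Away from the axis $\{x=0\}$ the forward map $U_\eps\mapsto w_\eps$ is composed of $r$-derivatives and of multiplication by powers of $r$ that are smooth and nonvanishing; as $\Ginf$-regularity is preserved by such operations, $\Ginf$-regularity of $u_\eps$ at a point with $r>0$ would force it for $w_\eps$, and by contraposition every ray with $r>0$ belongs to $\singsupp_{\Ginf}U$. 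The two points on the axis, namely the source $(0,0)$ and the refocusing point $(1+c_0/c_1,0)$ of the reflected wave, are limits of such ray points and therefore belong to the closed set $\singsupp_{\Ginf}U$ automatically. Together with the upper bound this gives the asserted equality.

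The step I expect to be the main obstacle is the behaviour near the axis $r=0$, where the inverse Darboux transformation divides by $r^{2k-1}$: transferring the sharp, unfilled support and establishing $\Ginf$-regularity away from the two focusing instants requires systematic use of the parity of the intermediate functions and of the vanishing of their values at $r=0$ and of their integrals at $r=\infty$. The genuinely delicate point is the refocusing of the reflected wave at the origin at time $1+c_0/c_1$, where the one-dimensional reduction degenerates; here the closedness of the singular support and the explicit thin-strip structure of $w_\eps$ are what allow the conclusion to be reached without a separate analysis at the origin.
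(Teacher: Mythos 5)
Your overall strategy -- reducing the radial problem to one space dimension via the odd-dimensional Darboux/descent transformation -- is the same idea the paper uses, but you run it in the \emph{opposite} direction: you map the $d$-dimensional solution forward, $w_\eps=(\tfrac1r\partial_r)^{k-1}(r^{2k-1}U_\eps)$, whereas the paper first constructs a one-dimensional solution $v_{n,\varepsilon}$ with specially prepared initial velocity $\int_{-\infty}^{r}(-s_n)\cdots\int_{-\infty}^{s_2}(-s_1)\varphi_{\varepsilon}(s_1)\,ds_1\cdots ds_n$ (an $n$-fold integration of the mollifier) and then descends by pure differentiation, $u_{2n+1,\varepsilon}=[(-1/r)\partial_r]^n v_{n,\varepsilon}$, citing \cite{G:1994}. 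This reversal is not cosmetic; it creates two genuine gaps.

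First, the support statement. For $d\ge 5$ your inversion of the transform requires iterated integrations $r^{2k-1}U_\eps=\int_0^r s\,(\cdots)\,ds$, and the claim that the strips are ``transported to $U_\eps$ without being filled in'' needs, in the annular region \emph{between} the reflected and the transmitted shells (for $1<t<1+c_0/c_1$), the vanishing of the \emph{partial} moments of $w_\eps$ over the inner shell alone, e.g.\ $\int_{\mathrm{inner\ strip}} s\,w_\eps(t,s)\,ds=0$. Compact support and parity only give that the \emph{total} moments vanish (the sum over inner and outer shells), and domain-of-dependence arguments cannot help since the backward cone of a point between the shells meets the data of both. The vanishing of the partial moments is precisely the strong Huygens cancellation you are trying to prove; it can in fact be extracted from higher moment identities for the transformed data $\psi(r/\eps)$ and from the affine action of the temporal jump on pulse profiles, but none of this is in your proposal, and the tools you name (parity, vanishing at $r=0$ and at $r=\infty$) do not suffice. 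Note also that you locate the difficulty at the axis, whereas this obstruction sits at all radii. The paper sidesteps the entire issue: since $u_{2n+1,\varepsilon}$ is obtained from $v_{n,\varepsilon}$ by differentiation only, and the one-dimensional solution is locally constant off the strips, the support of $u_{2n+1,\varepsilon}$ is controlled by the support of $\partial_r v_{n,\varepsilon}$, which the one-dimensional theory gives exactly.

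Second, the lower bound along the refracted (reflected) rays. Theorem~\ref{thm : propagation3} legitimately gives you the transmitted rays $\Gamma_\pm$, but for the reflected rays you invoke \cite[Theorem 4.1]{DHO:2013} ``applied to the data $(0,\partial_t w_\eps|_{t=0})$''. That theorem's proof is tied to mollifier-type data -- symmetric, with $\varphi'\ge 0$ on $[-\varepsilon,0]$, supported in $[-\varepsilon,\varepsilon]$ -- and your transformed data $(\tfrac1r\partial_r)^{k-1}(r^{2k-1}\varphi_\eps)=\psi(r/\eps)$ is odd with zero mean, so the hypotheses fail and the citation does not apply; the heuristic that ``a purely temporal discontinuity splits each pulse with nonzero coefficients'' is exactly the quantitative statement that must be re-proved through the $O(\varepsilon)$-wide transition zone for this data class. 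The paper's construction is designed to avoid precisely this: the $n$-fold integrated initial velocity of $v_{n,\varepsilon}$ is again symmetric, monotone on $[-\varepsilon,0]$ and supported in $[-\varepsilon,\varepsilon]$, so \cite[Theorem 4.1]{DHO:2013} applies verbatim, and the resulting equalities for $\singsupp_{\Ginf}$ of $v_{n,\varepsilon}$ and for $\supp \partial_r v_{n,\varepsilon}$ are then pushed down to $u_{2n+1,\varepsilon}$ through the differential relation. Your regularity-transfer step in the other direction (singularity of $w_\eps$ at $r>0$ forces singularity of $U_\eps$, since the forward map is differentiation and multiplication by nonvanishing smooth functions) is correct, as is the closure argument at the two axis points; but as written the proposal does not establish either the support identity for $d\ge5$ or the reflected-ray singularities for any odd $d$.
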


\begin{proof}
On the level of representatives, problem $(\ref{eq : generalized multi-dimensional wave})$ reads
\[
	\begin{array}{l}
		\partial_t^2u_{\varepsilon} - c_{\varepsilon}(t)^2\Delta u_{\varepsilon} = 0, \quad 0 < t < T,\ x \in \mathbb{R}^d,\\[2pt]
		u_{\varepsilon}|_{t=0} = 0,\quad \partial_t u_{\varepsilon}|_{t=0} = \varphi_{\varepsilon}(|x|), \quad x \in \mathbb{R}^d.
	\end{array}
\]
Since the initial data are radially symmetric, so is the solution $u_{\varepsilon}$. Put  $u_{d,\varepsilon}(t,r):= u_{\varepsilon}(t,x)$ for $|r| = |x|$, where the subscript $d$ represents the dependence of $u_{d,\varepsilon}$ on dimension. Then $u_{d,\varepsilon}$ satisfies the Cauchy problem
\begin{equation}\label{eq : u_d}
	\begin{array}{l}
		\partial_t^2u_{d,\varepsilon} - c_{\varepsilon}(t)^2\left(\partial_r^2 + \dfrac{d-1}{r}\partial_r \right)u_{d,\varepsilon} = 0, \quad 0 < t < T,\ r \in \mathbb{R},\\[2pt]	
		u_{d,\varepsilon}|_{t=0} = 0, \quad \partial_t u_{d,\varepsilon}|_{t=0} = \varphi_{\varepsilon}, \quad r \in \mathbb{R}.
	\end{array}
\end{equation}
According to \cite{G:1994}, if $u_{d,\varepsilon}$ satisfies the first equation in $(\ref{eq : u_d})$, then $(-1/r)\partial_r u_{d,\varepsilon}$ satisfies the first equation of $(\ref{eq : u_d})$ with $d$ replaced by $d+2$. This yields that, if $v_{n,\varepsilon}$, $n \in \mathbb{N}$, satisfies the Cauchy problem
\[
	\begin{array}{l}
		\partial_t^2v_{n,\varepsilon} - c_{\varepsilon}(t)^2\partial_r^2v_{n,\varepsilon} = 0, \quad 0 < t < T,\ r \in \mathbb{R},\\[2pt]
		v_{n,\varepsilon}|_{t=0} = 0, \quad r \in \mathbb{R},\\[2pt]
		\partial_t v_{n,\varepsilon}|_{t=0} = \int_{-\infty}^{r} (-s_n) \cdots \int_{-\infty}^{s_2}(-s_1)\varphi_{\varepsilon}(s_1)\,ds_1\cdots ds_n, \quad r \in \mathbb{R},
	\end{array}
\]
then $u_{2n+1,\varepsilon}:= [(-1/r)\partial_r]^n v_{n,\varepsilon}$ satisfies problem $(\ref{eq : u_d})$ with $d = 2n+1$. It is clear from the definition of $\varphi_{\varepsilon}$ that $\partial_t v_{n,\varepsilon}|_{t=0}$ is symmetric, $(\partial_t v_{n,\varepsilon}|_{t=0})^{\prime} \ge 0$ on $[-\varepsilon,0]$ and supp\,$\partial_t v_{n,\varepsilon}|_{t=0} \subset [-\varepsilon,\varepsilon]$. Hence the same argument as in the proof of Theorem 4.1 of \cite{DHO:2013} shows that the $\Ginf$-singular support of the class of $(v_{n,\varepsilon})_{\varepsilon \in (0,1]}$ and the support of the class of $(\partial_r v_{n,\varepsilon})_{\varepsilon \in (0,1]}$ are equal to the set given by $(\ref{eq : one dimensional case})$. This and the relationship $u_{2n+1,\varepsilon} = [(-1/r)\partial_r]^n v_{n,\varepsilon}$ imply that the $\Ginf$-singular support and the support of the class of $(u_{2n+1,\varepsilon})_{\varepsilon \in (0,1]}$ are the same as the set given by $(\ref{eq : one dimensional case})$. Thus the assertions follow.
\end{proof}

In the even-dimensional case, the following result holds (see Figure \ref{fig : upper bound2}).

\begin{figure}[htbp]
\begin{center}
\includegraphics[width=8cm]{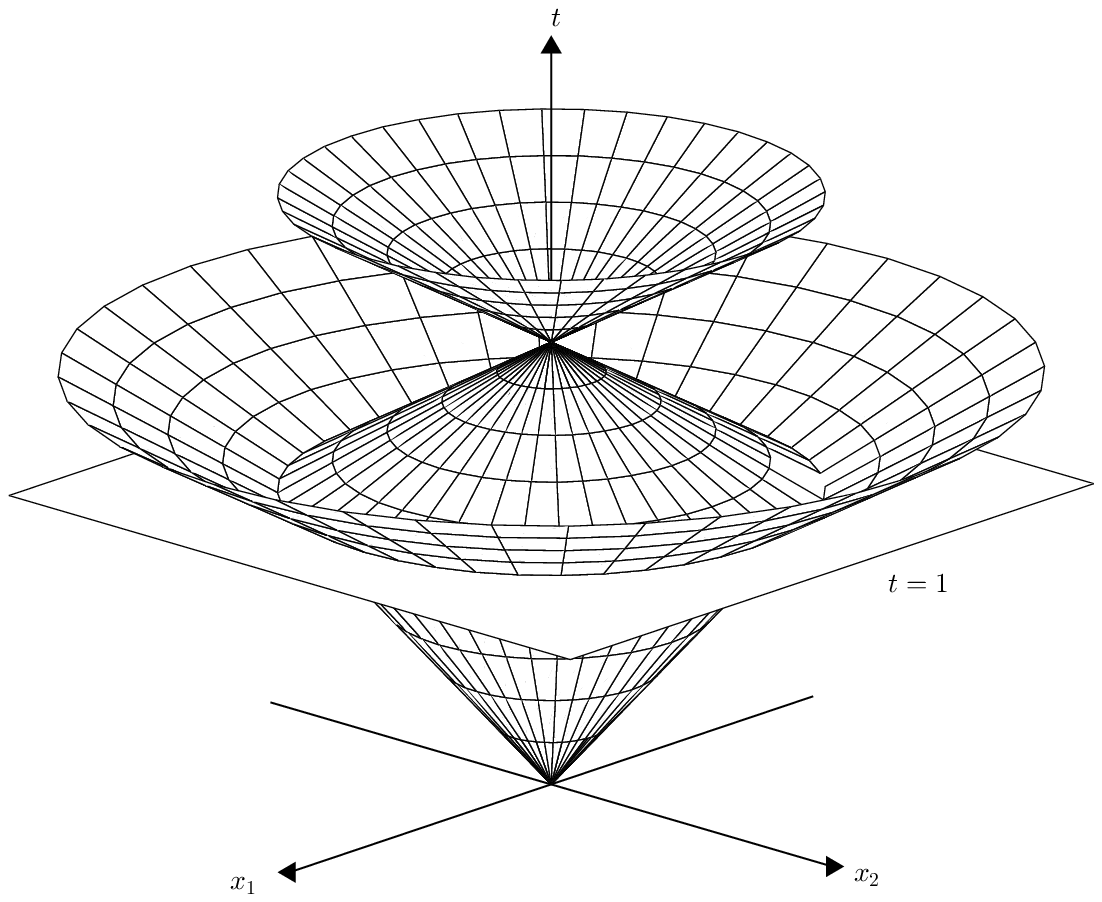}
\end{center}
\caption{An upper bound for the $\Ginf$-singular support of the solution $U$.}
\label{fig : upper bound2}
\end{figure}

\begin{theorem}\label{thm : even dimensional case}
Let $d$ be even. Assume that $C$, $U_0$ and $U_1$ are as in Theorem $\ref{thm : odd dimensional case}$ and further that $U \in \G_{2,2}([0,T] \times \mathbb{R}^d)$ is the solution to problem $(\ref{eq : generalized multi-dimensional wave})$. Then it holds that
\begin{equation}\label{eq : even dimensional case}
	\begin{split}
		\singsupp_{\Ginf} U
		& \subset \left\{(t,x) \mid t=1,\ |x|\le\int_0^1 c(s)\,ds\right\} \\[2pt]
		&\quad \cup \left\{(t,x) \mid |x|=\int_0^t c(s)\,ds,\ 0 \le t \le T\right\} \\[2pt]
		&\quad \cup \left\{(t,x) \mid |x|=\left|2\int_0^1 c(s)\,ds - \int_0^t c(s)\,ds\right|,\ 1 \le t \le T\right\}.
	\end{split}
\end{equation}
\end{theorem}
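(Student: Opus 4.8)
The plan is to deduce the even-dimensional statement from the odd-dimensional one, which is completely settled by Theorem~\ref{thm : odd dimensional case}, via Hadamard's method of descent. As a first step I pass to the radial picture: since the data are radial and $c=c(t)$, the solution is radial, and its profile $u_{d,\varepsilon}(t,r)$ solves the radial equation $(\ref{eq : u_d})$. Because the coefficient depends on $t$ alone, the equation is translation invariant in $x$, so the $d$-dimensional profile is obtained from the $(d+1)$-dimensional one (with $d+1$ odd and $\ge 3$) by integrating out the extra spatial variable. In radial coordinates $\rho=\sqrt{r^2+\sigma^2}$ this is the Abel-type descent formula
\[
   u_{d,\varepsilon}(t,r) = 2\int_r^{\infty} u_{d+1,\varepsilon}(t,\rho)\,\frac{\rho\,d\rho}{\sqrt{\rho^2-r^2}},
\]
where $u_{d+1,\varepsilon}$ is the radial profile of the solution to $(\ref{eq : generalized multi-dimensional wave})$ in dimension $d+1$. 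The initial velocity descends under the same transform; since the proof of Theorem~\ref{thm : odd dimensional case} only uses that the profile of the data is symmetric, supported in $[-\varepsilon,\varepsilon]$ and monotone on $[-\varepsilon,0]$, one may pick the $(d+1)$-dimensional data (again a scale-$\varepsilon$ mollifier of a multiple of $\delta$, with $\Ginf$-singular support $\{0\}$) so that the descent reproduces exactly the data $\varphi_\varepsilon(|x|)$ of $(\ref{eq : generalized multi-dimensional wave})$. Thus Theorem~\ref{thm : odd dimensional case} applies and tells us that both the support and the $\Ginf$-singular support of $u_{d+1,\varepsilon}(t,\cdot)$ are carried by the two fronts $\rho=R_1(t):=\int_0^t c(s)\,ds$ (outgoing) and $\rho=R_2(t):=|2\int_0^1 c(s)\,ds-\int_0^t c(s)\,ds|$ (reflected).

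Next I read off the geometry. For $(t,r)$ with $t\ne 1$ and $r$ strictly smaller than every front $R_j(t)>r$ lying above it, the kernel $\rho/\sqrt{\rho^2-r^2}$ is smooth in $(r,\rho)$ on a neighbourhood of each relevant $R_j(t)$, because $R_j(t)-r>0$ there. The decisive point --- and the main obstacle --- is a uniform-in-$\varepsilon$, all-orders regularity estimate showing that integrating the $\varepsilon$-concentrated front part of $u_{d+1,\varepsilon}$ against this smooth kernel produces a function of $(t,r)$ whose every derivative is $O(\varepsilon^{-N_0})$ for one fixed $N_0$, i.e.\ is $\Ginf$-regular. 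The mechanism is that an $r$-derivative falls on the smooth kernel directly, while a $t$-derivative acts on the moving front essentially as $-R_j'(t)\partial_\rho$ plus an already controlled remainder; integrating by parts in $\rho$ then transfers every such $\partial_\rho$ onto the kernel, where it costs only an $\varepsilon$-independent factor. Hence no accumulation of negative powers of $\varepsilon$ occurs as the order of differentiation grows, and $u_{d,\varepsilon}$ is $\Ginf$-regular throughout the open tail. The front singularities survive only where the kernel singularity $\rho\to r$ collides with a front, that is along $r=R_1(t)$ and $r=R_2(t)$, which are precisely the outgoing and reflected cones in $(\ref{eq : even dimensional case})$.

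It remains to account for the slice $t=1$, where the preceding argument is unavailable because the coefficient $c_\varepsilon$ carries its own $\Ginf$-singularity there. At $t=1$ the jump of $c_\varepsilon$ is felt simultaneously at every point of the spatial support of the solution, which in even dimensions is the \emph{solid} disk $\{|x|\le\int_0^1 c(s)\,ds\}$ (the tail created by the failure of Huygens' principle, visible already from the fact that the descent integral is nonzero for all $r\le R_1(t)$). In analogy with Proposition~\ref{prop : regularity}, this jump may generate singularities across the whole of this disk, and since we only claim an upper bound we simply include it. Collecting the three pieces --- the solid disk at $t=1$, the outgoing cone $r=R_1(t)$ and the reflected cone $r=R_2(t)$ --- yields the inclusion $(\ref{eq : even dimensional case})$. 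I expect the uniform all-order estimate of the second paragraph to be the genuinely technical core; everything else is bookkeeping on the descent formula together with an appeal to the odd-dimensional theorem.
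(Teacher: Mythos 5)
Your descent-from-odd-dimensions plan is genuinely different from the paper's reduction (the paper goes \emph{down} to one dimension: it writes $u_{2n,\varepsilon}=[(-1/r)\partial_r]^{n-1}v_{n,\varepsilon}$ and represents the two-dimensional radial profile as $v_{n,\varepsilon}(t,r)=\int_0^1(1-\rho^2)^{-1/2}w_{n,\varepsilon}(t,r\rho)\,d\rho$ with $w_{n,\varepsilon}$ a one-dimensional solution whose data solve an Abel equation), but it runs into the same two obstacles, and your sketch leaves both open. The first is the data matching. Theorem \ref{thm : odd dimensional case} is a statement about the specific data $(\varphi_\varepsilon(|x|))_{\varepsilon\in(0,1]}$; your $(d+1)$-dimensional data are not free to be "picked" — they are forced to be the Abel inversion of $\varphi_\varepsilon$, namely $\psi_\varepsilon(\rho)=-\pi^{-1}\int_\rho^\infty\varphi_\varepsilon'(r)(r^2-\rho^2)^{-1/2}\,dr$, which scales like $\varepsilon^{-2}\psi(\cdot/\varepsilon)$ rather than $\varepsilon^{-1}\varphi(\cdot/\varepsilon)$, and, writing $\psi(\rho)=\pi^{-1}\int_0^\infty g(\sqrt{\rho^2+s^2})\,ds$ with $g(u)=-\varphi'(u)/u$, need \emph{not} inherit the monotonicity hypothesis on $[-\varepsilon,0]$. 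So the theorem you invoke does not apply as stated. What you actually need from dimension $d+1$ is only the upper-bound half (support and $\Ginf$-singular support contained in shrinking neighbourhoods of the two fronts), which does hold for general symmetric, compactly supported, moderate data — but that must be extracted from the one-dimensional arguments anew, since the monotonicity is exactly what the equality (lower-bound) part of Theorem \ref{thm : odd dimensional case} consumes.

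The second, more serious, gap is the uniform all-orders estimate, which you rightly identify as the core but then only assert. As formulated it would fail: integration by parts in $\rho$ over $[r,\infty)$ creates boundary contributions at $\rho=r$, where the derivatives of the kernel $\rho(\rho^2-r^2)^{-1/2}$ are not even integrable, and the information you import from the odd-dimensional theorem — that the \emph{class} of $u_{d+1,\varepsilon}$ is supported on the fronts, i.e.\ mere negligibility off them — is too weak to control these terms. You need the integrand to vanish \emph{identically} on an $\varepsilon$-independent neighbourhood of $\rho=r$. This is precisely the role of the paper's support property (SP), proved by running the characteristic system of left/right movers with a cutoff of the data and exact finite propagation speed; in your setting the cleanest substitute would be strong Huygens' principle in the odd dimension $d+1$, applied piecewise on $\{t<1-\varepsilon\}$ and $\{t>1+\varepsilon\}$, giving exact confinement to $O(\varepsilon)$-shells around the two fronts — but some such argument must be supplied. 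Likewise, the conversion of $t$-derivatives via "$\partial_t\approx-R_j'(t)\partial_\rho$ plus a controlled remainder" has no obvious rigorous form; the workable mechanism (and the paper's) is that on compact sets away from $t=1$ one has $c_\varepsilon\equiv\mathrm{const}$ for small $\varepsilon$, so the equation converts $\partial_t^{2}$ \emph{exactly} into $c^2\bigl(\partial_\rho^2+\tfrac{d}{\rho}\partial_\rho\bigr)$, after which all $\rho$-derivatives are moved onto the kernel; the first-order term $\tfrac{d}{\rho}\partial_\rho$, absent from your sketch, is harmless on compact subsets of the interior regions but must be carried through the induction (the paper avoids the axis degeneracy by expressing $(-1/r)\partial_r$ through the equation itself). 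Until these two points are filled in, the proposal is a plausible programme rather than a proof.
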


\begin{proof}
As in the proof of Theorem \ref{thm : odd dimensional case}, we consider problem $(\ref{eq : u_d})$. If $v_{n,\varepsilon}$, $n \in \mathbb{N}$, satisfies the Cauchy problem
\begin{equation}\label{eq : v2}
	\begin{array}{l}
		\partial_t^2v_{n,\varepsilon} - c_{\varepsilon}(t)^2\left(\partial_r^2 + \dfrac{1}{r}\partial_r \right)v_{n,\varepsilon} = 0, \quad 0 < t < T,\ r \in \mathbb{R},\\[2pt]	
		v_{n,\varepsilon}|_{t=0} = 0,  \quad r \in \mathbb{R},\\[2pt]
		\partial_t v_{n,\varepsilon}|_{t=0} \\[2pt]
		= \left\{
		\begin{array}{ll}
		\int_{-\infty}^{r} (-s_{n-1}) \cdots \int_{-\infty}^{s_2}(-s_1)\varphi_{\varepsilon}(s_1)\,ds_1\cdots ds_{n-1}, \quad r \in \mathbb{R},  & \mbox{if}\ n \ge 2,\vspace{4pt}\\
		\varphi_{\varepsilon}(r), \quad r \in \mathbb{R}, & \mbox{if}\ n = 1,
		\end{array}
		\right.
	\end{array}
\end{equation}
then $u_{2n,\varepsilon}:= [(-1/r)\partial_r]^{n-1} v_{n,\varepsilon}$ satisfies problem $(\ref{eq : u_d})$ with $d=2n$. By the argument as in \cite[p.66]{W:1961}, the solution $v_{n,\varepsilon}$ of problem $(\ref{eq : v2})$ is given by
\begin{equation}\label{eq : formula}
	v_{n,\varepsilon}(t,r) = \int_0^1 \dfrac{1}{\sqrt{1-\rho^2}} w_{n,\varepsilon}(t,r\rho)\,d\rho,
\end{equation}
provided that $(w_{n,\varepsilon})_{\varepsilon \in (0,1]} \in \EM([0,T]\times \mathbb{R})$ is symmetric and satisfies
\begin{align}
	& \partial_t^2w_{n,\varepsilon} - c_{\varepsilon}(t)^2\partial_r^2w_{n,\varepsilon} = 0, \quad 0 < t < T,\ r \in \mathbb{R}, \label{eq : w} \\[2pt]
	& w_{n,\varepsilon}|_{t=0} = 0, \quad r \in \mathbb{R}, \notag \\[2pt]
	& \partial_t v_{n,\varepsilon}|_{t=0} = \int_0^1 \dfrac{1}{\sqrt{1-\rho^2}} (w_{n,\varepsilon})_t(0,r\rho)\,d\rho, \quad  r \in \mathbb{R}. \label{eq : initial condition}
\end{align}
(For clarity of notation, we write $(w_{n,\varepsilon})_t$ for $(\p_t w_{n,\varepsilon})$.)
The existence of such $w_{n,\varepsilon}$ can be seen as follows. By the change of variable $y=r\rho$, we obtain from $(\ref{eq : initial condition})$ that
\begin{align*}
	\partial_t v_{n,\varepsilon}|_{t=0} & = \int_0^r \dfrac{|r|}{r\sqrt{r^2-y^2}}  (w_{n,\varepsilon})_t(0,y)\,dy = \int_0^{|r|} \dfrac{1}{\sqrt{r^2-y^2}} (w_{n,\varepsilon})_t(0,y)\,dy.
\end{align*}
Thanks to \cite{S:1963}, this integral equation can be solved and $\partial_t w_{n,\varepsilon}|_{t=0}$ is given by
\begin{align}
	\partial_t w_{n,\varepsilon}|_{t=0}
	= \dfrac{1}{\pi} \dfrac{d}{dr} \int_0^{1} \dfrac{2|r|\rho}{\sqrt{1-\rho^2}} (v_{n,\varepsilon})_t(0,r\rho)\,d\rho, \label{eq : initial condition'}
\end{align}
so that $(\partial_t w_{n,\varepsilon}|_{t=0})_{\varepsilon \in (0,1]}$ belongs to $\EM(\mathbb{R})$. This guarantees that $(w_{n,\varepsilon})_{\varepsilon \in (0,1]}$ exists. Note that the integrand is a smooth function of $r$, due to the symmetry of $(v_{n,\varepsilon})_t(0,\cdot)$.

We now investigate the regularity of $(w_{n,\varepsilon})_{\varepsilon \in (0,1]}$. Since supp\,$\partial_t v_{n,\varepsilon}|_{t=0} \subset [-\varepsilon,\varepsilon]$, we see from $(\ref{eq : initial condition'})$ that $(\partial_t w_{n,\varepsilon}|_{t=0})_{\varepsilon \in (0,1]}$ belongs to $\EMinf(\mathbb{R} \setminus \{0\})$. Therefore, as in the previous subsection, we can show that $(w_{n,\varepsilon})_{\varepsilon \in (0,1]}$ belongs to $\EMinf([0,T] \times \mathbb{R} \setminus \Gamma)$ with $\Gamma$ given by
\[
	\begin{split}
		& \Gamma:= \{(t,r) \mid t = 1,\ r \in \mathbb{R}\} \cup \left\{(t,r) \mid |r|=\int_0^t c(s)\,ds,\ 0 \le t \le T\right\} \\[2pt]
		&\qquad \cup \left\{(t,r) \mid |r|=\left|2\int_0^1 c(s)\,ds - \int_0^t c(s)\,ds\right|,\ 1 \le t \le T\right\}.
	\end{split}
\]

We next show that $(v_{n,\varepsilon})_{\varepsilon \in (0,1]}$ also belongs to $\EMinf([0,T] \times \mathbb{R} \setminus \Gamma)$. To this end, we take a cut-off function $\chi \in \mathcal{C}^{\infty}_0(\mathbb{R})$ which is identically equal to one in a neighborhood of the origin. Split $\partial_t w_{n,\varepsilon}|_{t=0}$ into two parts
\[
	\partial_t w_{n,\varepsilon}|_{t=0} = \chi(r) \partial_t w_{n,\varepsilon}|_{t=0} + (1-\chi(r)) \partial_t w_{n,\varepsilon}|_{t=0}.
\]
For any $\chi$, $(\chi \partial_t w_{n,\varepsilon}|_{t=0})_{\varepsilon \in (0,1]} \in \EMinf(\mathbb{R}\setminus\{0\})$ and $((1-\chi) \partial_t w_{n,\varepsilon}|_{t=0})_{\varepsilon \in (0,1]} \in \EMinf(\mathbb{R})$.
Let $w_{n,1,\varepsilon}$ and $w_{n,2,\varepsilon}$ be the solutions corresponding to the data $\chi \partial_t w_{n,\varepsilon}|_{t=0}$ and $(1-\chi) \partial_t w_{n,\varepsilon}|_{t=0}$, respectively. From the linearity of equation $(\ref{eq : w})$, $w_{n,\varepsilon} = w_{n,1,\varepsilon} + w_{n,2,\varepsilon}$. By $(\ref{eq : formula})$ we have
\begin{align*}
	v_{n,\varepsilon}(t,r)
	& = \int_0^1 \dfrac{1}{\sqrt{1-\rho^2}} w_{n,1,\varepsilon}(t,r\rho)\,d\rho + \int_0^1 \dfrac{1}{\sqrt{1-\rho^2}} w_{n,2,\varepsilon}(t,r\rho)\,d\rho \\
	& =:I_{n,1,\varepsilon}(t,r) + I_{n,2,\varepsilon}(t,r).
\end{align*}
It is clear from Proposition \ref{prop : regularity} that $(w_{n,2,\varepsilon})_{\varepsilon \in (0,1]} \in \EMinf(\{(t,r) \mid t \ne 1,\ r \in \mathbb{R}\})$ for any $\chi$.
This implies that $(I_{n,2,\varepsilon})_{\varepsilon \in (0,1]} \in \EMinf(\{(t,r) \mid t \ne 1,\ r \in \mathbb{R}\})$ for any $\chi$.
Hence, in order to show that $(v_{n,\varepsilon})_{\varepsilon \in (0,1]} \in \EMinf([0,T] \times \mathbb{R} \setminus \Gamma)$, it suffices to prove that $(I_{n,1,\varepsilon})_{\varepsilon \in (0,1]} \in \EMinf([0,T] \times \mathbb{R} \setminus \Gamma)$ in the limit as the support of $\chi$ shrinks to $\{0\}$. We divide $[0,T] \times \mathbb{R} \setminus \Gamma$ into the following six regions (see Figure\;\ref{fig:SixRegions}):
\[
	\begin{split}
		D_1 &:= \left\{(t,r) \mid |r| < \int_0^t c(s)\,ds,\ 0 < t < 1\right\}, \\[2pt]
		D_2 &:= \left\{(t,r) \mid |r| < 2\int_0^1 c(s)\,ds - \int_0^t c(s)\,ds,\ 1 < t < 1+c_0/c_1\right\},\\[2pt]
		D_3 &:= \left\{(t,r) \mid |r| < \int_0^t c(s)\,ds -2\int_0^1 c(s)\,ds,\ 1+c_0/c_1 < t < T\right\},\\[2pt]
		D_4 &:= \left\{(t,r) \mid \left|2\int_0^1 c(s)\,ds - \int_0^t c(s)\,ds\right| < |r| < \int_0^t c(s)\,ds,\ 1 < t < T\right\},\\[2pt]
		D_5 &:= \left\{(t,r) \mid |r| > \int_0^t c(s)\,ds,\ 0 < t < 1\right\},\\[2pt]
		D_6 &:= \left\{(t,r) \mid |r| > \int_0^t c(s)\,ds,\ 1 < t < T\right\}.
	\end{split}
\]
\begin{figure}[htbp]
\begin{center}
\includegraphics[width = 8cm]{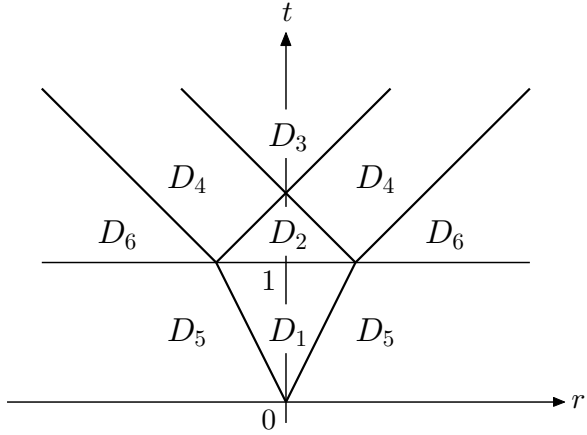}
\end{center}
\caption{The singular support in even space dimensions, radial cross section.}
\label{fig:SixRegions}
\end{figure}
It is easily checked that $(I_{n,1,\varepsilon})_{\varepsilon \in (0,1]} \in \EMinf(\sum_{j=1}^{3}D_j)$ for any $\chi$, using the fact that $(w_{n,1,\varepsilon})_{\varepsilon \in (0,1]} \in \EMinf(\sum_{j=1}^{3}D_j)$ for any $\chi$. As for the other regions, we first consider $D_4 \cap \{(t,r) \mid t < 1+c_0/c_1,\ r \ge 0\}$. Let $K \subset D_4 \cap \{(t,r) \mid t < 1+c_0/c_1,\ r \ge 0\}$ be any fixed compact set.
We start by proving the following support property of the derivatives of $w_{n,1,\varepsilon}$:
\begin{itemize}
\item[(SP)]
 For any $(t,r) \in K$, there exist $0 < r_0$, $r_1 < r$ independently of $\varepsilon$, such that $(w_{n,1,\varepsilon})_r(t,r^{\prime}) = (w_{n,1,\varepsilon})_t(t,r^{\prime}) = 0$ for $0\le r^{\prime} \le r_0$ and $r_1 \le r^{\prime} \le r$, possibly after shrinking the support of $\chi$.
\end{itemize}
Indeed, $w_{n,1,\varepsilon}$ solves the problem
\[
	\begin{array}{rclcl}
		\partial_t^2 w_{n,1,\varepsilon} - c_{\varepsilon}(t)^2\partial_r^2 w_{n,1,\varepsilon} &=& 0, &\ &t > 0,\ r \in \mathbb{R},\\[2pt]
		w_{n,1,\varepsilon}|_{t = 0}	& = & 0,&  & r \in \mathbb{R},\\[2pt]
		\partial_t w_{n,1,\varepsilon}|_{t = 0} & =& \chi \partial_t w_{n,\varepsilon}|_{t = 0}, &  & r \in \mathbb{R},
	\end{array}
\]
so
\begin{align*}
	& f_{n,\varepsilon} = (\partial_t - c_{\varepsilon}\partial_r)w_{n,1,\varepsilon},\\
	& g_{n,\varepsilon} = (\partial_t + c_{\varepsilon}\partial_r)w_{n,1,\varepsilon}
\end{align*}
satisfy the corresponding first order hyperbolic system
\[
	\begin{array}{rclcl}
		(\partial_t + c_{\varepsilon}\partial_r) f_{n,\varepsilon}  &=& \mu_{\varepsilon}(t)(f_{n,\varepsilon} - g_{n,\varepsilon}),
&\ &t > 0,\ r \in \mathbb{R},\\[2pt]
		(\partial_t - c_{\varepsilon}\partial_r) g_{n,\varepsilon}  &=& \mu_{\varepsilon}(t)(g_{n,\varepsilon} - f_{n,\varepsilon}),
&\ &t > 0,\ r \in \mathbb{R},\\[2pt]			
		f_{n,\varepsilon}|_{t = 0}	& = & \chi \partial_t w_{n,\varepsilon}|_{t = 0},&  & r \in \mathbb{R},\\[2pt]
		g_{n,\varepsilon}|_{t = 0}	& = & \chi \partial_t w_{n,\varepsilon}|_{t = 0},&  & r \in \mathbb{R},
	\end{array}
\]
where $\mu_{\varepsilon}(t) = c'_{\varepsilon}(t)/2c_{\varepsilon}(t)$. The supports of $f_{n,\varepsilon}$ and $g_{n,\varepsilon}$ are contained in the set
\[
	\begin{split}
		& \left\{(t,r) \mid r = \pm\int_0^t c(s)\,ds,\ t \geq 0\right\} \\[2pt]
		&\qquad \cup \left\{(t,r) \mid r = \pm\left(2\int_0^1 c(s)\,ds - \int_0^t c(s)\,ds\right),\ t\geq 1\right\}
	\end{split}
\]
in the limit as the support of $\chi$ shrinks to $\{0\}$. It follows from the definitions of $f_{n,\varepsilon}$ and
$g_{n,\varepsilon}$ that the supports of $\partial_t w_{n,1,\varepsilon}$ and $\partial_r w_{n,1,\varepsilon}$ are contained in the same set as the support of $\chi$ shrinks to $\{0\}$. This proves the indicated support property (SP).

We further observe that $c_{\varepsilon}(t)=c_1$ on $K$ for $\varepsilon \in (0,1]$ small enough. Taking the $2m$-th derivative of $I_{n,1,\varepsilon}$ with respect to $t$ and integrating by parts, we get, for $(t,r) \in K$ and $\varepsilon \in (0,1]$ small enough,
\begin{align*}
	\partial_t^{2m} I_{n,1,\varepsilon}(t,r)
	& = \dfrac{c_1^{2m}}{r^{2m-1}}\int_0^1 \dfrac{1}{\sqrt{1-\rho^2}} \partial_{\rho}^{2m-1}(w_{n,1,\varepsilon})_r(t,r\rho)\,d\rho \\
	& = \dfrac{(-1)^{2m-1}c_1^{2m}}{r^{2m-1}} \int_0^1 \left(\partial_{\rho}^{2m-1} \dfrac{1}{\sqrt{1-\rho^2}}\right) (w_{n,1,\varepsilon})_r(t,r\rho)\,d\rho.
\end{align*}
Note that the integration by parts is justified, because $(w_{n,1,\varepsilon})_r(t,r\rho) = 0$ near $\rho = 0$ and $\rho = 1$, due to property (SP).
Similarly
\begin{align*}
	\partial_t^{2m-1} I_{n,1,\varepsilon}(t,r)
	& = \dfrac{(-1)^{2m-2}c_1^{2m-2}}{r^{2m-2}} \int_0^1 \left(\partial_{\rho}^{2m-2} \dfrac{1}{\sqrt{1-\rho^2}}\right) (w_{n,1,\varepsilon})_{t}(t,r\rho)\,d\rho.
\end{align*}
Hence, for any $\alpha \in \mathbb{N}_0$, there exists $\kappa > 0$ such that
\begin{align*}
	\|\partial_t^{\alpha} I_{n,1,\varepsilon}\|_{L^{\infty}(K)}
	& \le \kappa \max\{\|w_{n,1,\varepsilon}\|_{L^{\infty}([0,T] \times \mathbb{R})}, \|\partial_{r}w_{n,1,\varepsilon}\|_{L^{\infty}([0,T] \times \mathbb{R})}, \\
	& \qquad \qquad \qquad \|\partial_{t}w_{n,1,\varepsilon}\|_{L^{\infty}([0,T] \times \mathbb{R})}\}
\end{align*}
for sufficiently small $\varepsilon \in (0,1]$. Analogous estimates for the $r$- and mixed derivatives can be obtained. The same argument applies in the case that $K \subset (D_4 \cup D_5 \cup D_6) \cap \{(t,r) \mid t \ne 1+c_0/c_1,\ r \in \mathbb{R}\}$ is any fixed compact set. Therefore, taking into account finite propagation speed, we obtain that $(I_{n,1,\varepsilon})_{\varepsilon \in (0,1]} \in \EMinf(D_4 \cup D_5 \cup D_6)$ as the support of $\chi$ shrinks to $\{0\}$. Thus $(I_{n,1,\varepsilon})_{\varepsilon \in (0,1]} \in \EMinf([0,T] \times \mathbb{R} \setminus \Gamma)$ as the support of $\chi$ shrinks to $\{0\}$.

We finally prove that $(u_{2n,\varepsilon})_{\varepsilon \in (0,1]} \in \EMinf([0,T] \times \mathbb{R} \setminus \Gamma^{\prime})$ with $\Gamma^{\prime}$ given by
\[
	\begin{split}
		& \Gamma^{\prime}:= \left\{(t,r) \mid t=1,\ |r| \le \int_0^1 c(s)\,ds\right\} \\[2pt]
		& \qquad \cup \left\{(t,r) \mid |r|=\int_0^t c(s)\,ds,\ 0 \le t \le T\right\} \\[2pt]
		&\qquad \cup \left\{(t,r) \mid |r|=\left|2\int_0^1 c(s)\,ds - \int_0^t c(s)\,ds\right|,\ 1 \le t \le T\right\}.
	\end{split}
\]
This means that assertion $(\ref{eq : even dimensional case})$ holds.
From the first equation of problem $(\ref{eq : v2})$, we have
\[
	-\dfrac{1}{r}\partial_r v_{n,\varepsilon}(t,r) = -\dfrac{\partial_t^2v_{n,\varepsilon}(t,r) - c_{\varepsilon}(t)^2\partial_r^2v_{n,\varepsilon}(t,r)}{c_{\varepsilon}(t)^2}.
\]
Note that, on every compact subset outside $\{(t,r) \mid t=1,\ r \in \mathbb{R}\}$, $c_{\varepsilon}(t) = c$, when $\varepsilon > 0$ is small enough. Therefore $((-1/r)\partial_r v_{n,\varepsilon})_{\varepsilon \in (0,1]} \in \EMinf([0,T]\times \mathbb{R} \setminus \Gamma)$. A direct calculation shows that $(-1/r)\partial_r v_{n,\varepsilon}$ satisfies the first equation in problem $(\ref{eq : u_d})$ with $d=4$. Thus we get
\[
	\left(-\dfrac{1}{r}\partial_r\right)^2 v_{n,\varepsilon} = -\dfrac{\partial_t^2\Bigl((-1/r)\partial_r v_{n,\varepsilon}\Bigr) - c_{\varepsilon}(t)^2\partial_r^2\Bigl((-1/r)\partial_r v_{n,\varepsilon}\Bigr)}{3c_{\varepsilon}(t)^2},
\]
so $([(-1/r)\partial_r]^2 v_{n,\varepsilon})_{\varepsilon \in (0,1]} \in \EMinf([0,T]\times \mathbb{R} \setminus \Gamma)$. By induction, we see that $(u_{2n,\varepsilon})_{\varepsilon \in (0,1]}=([(-1/r)\partial_r]^{n-1} v_{n,\varepsilon})_{\varepsilon \in (0,1]} \in \EMinf([0,T]\times \mathbb{R} \setminus \Gamma)$ for any $n \in \mathbb{N}$. In addition, by finite propagation speed,
\[
	(u_{2n,\varepsilon})_{\varepsilon \in (0,1]} \in \Neg\left(\left\{(t,r) \mid |r| > \int_0^t c(s)\,ds,\ 0 \le t \le T\right\}\right).
\]
Thus $(u_{2n,\varepsilon})_{\varepsilon \in (0,1]} \in \EMinf([0,T] \times \mathbb{R} \setminus \Gamma^{\prime})$ for any $n \in \mathbb{N}$. The proof of Theorem $\ref{thm : even dimensional case}$ is now complete.
\end{proof}

\begin{remark}
If the coefficient $c(t) = c_0 + (c_1 - c_0)H(t-1)$ is regularized in such a way that the corresponding element $C\in\cG_{\infty,2}(\R)$ is $\Ginf$-regular on $\R$, then as in the one-dimensional case, one can show that
\[
	\singsupp_{\Ginf} U = \displaystyle \left\{(t,x) \mid |x| = \int_0^t c(s)\,ds,\, 0 \le t \le T\right\}
\]
in any space dimension.
\end{remark}

\section{Appendix: Associated distributions}

In case the initial data and the coefficients are sufficiently smooth, hyperbolic equations and systems possess classical or distributional solutions. In numerous such situations it has been verified that the Colombeau solution admits the classical solution as associated distribution. This is generally the case when the coefficients are $\Cinf$-smooth (see e.g. \cite{LO:1991, O:1992}). An instance with merely $L^\infty$-coefficients is the linearized Euler system of isentropic gas dynamics, in which the association result has been proven in \cite{GO:2011b, O:1989}. For the one dimensional wave equation with a coefficient depending on time only and suffering a jump, i.e., problem\ (\ref{eq:1Dwavetime}), the Colombeau solution was shown to be associated with the piecewise distributional solution for arbitrary distributions as initial data in \cite{DHO:2013}.

If the coefficients are piecewise constant with a jump across a smooth hyperplane in space, the problem may be considered classically as a transmission problem. The one-dimensional problem
\begin{equation}\label{eq:wave1D}
   a(x)\p^2_t z(t,x) - \p_x\big(b(x)\p_x z(t,x) \big) = 0
\end{equation}
with initial conditions
\begin{equation}\label{eq:ICwave1D}
   z(0,x) = z_0(x),\qquad \p_t z(0,x) = z_1(x)
\end{equation}
has been treated in \cite{GO:2011b}, assuming that $a$ and $b$ are strictly positive and piecewise constant with a jump at $x=0$.
Viewing (\ref{eq:wave1D}) as a transmission problem across $x=0$, and assuming that ${z}_0$ is continuously differentiable and ${z}_1$ is continuous, there is a unique distributional solution $\overline{z} \in {\mathcal D}'(\R^2)$, the \emph{classical connected solution}, characterized by the following properties:
\begin{itemize}
\item[-] $\overline{z}$ is a distributional solution to \eqref{eq:wave1D} on the open half planes $\{x < 0\}$ and $\{x > 0\}$;
\item[-] $\overline{z} \in {\mathcal C}\big([0,\infty)\times \R\big)$ and $\overline{z}(0,\cdot) = z_0$;
\item[-] $\frac{\p}{\p t}\overline{z} \in {\mathcal C}\big([0,\infty)\times \R\big)$ and $\frac{\p}{\p t}\overline{z}(0,\cdot) = z_1$;
\item[-] the function $x\to {b}(x)\frac{\p}{\p x} \overline{z}(t,x)$ is continuous for almost all $t\geq 0$.
\end{itemize}
A Colombeau solution $Z\in \cG(\R^2)$ can be constructed corresponding to initial data $\iota(z_0)$, $\iota(z_1)$, with suitable interpretations of $a(x)$, $b(x)$ as elements $A$, $B$ of $\cG(\R)$. It was shown in \cite{GO:2011b} that the Colombeau solution $Z\in\cG(\R^2)$ admits the classical connected solution $\overline{z}$ as associated distribution. The problem (\ref{eq:nonconswave1D}) is a special case of (\ref{eq:wave1D}).

It is the purpose of this appendix to compute the classical connected solution, to extend the calculation to delta functions as initial data, to show that the Colombeau solution is associated with the distributional solution in this case as well, and to verify that the location of the singular support of the associated distribution coincides with the $\Ginf$-singular support computed in Theorem\;\ref{thm : propagation1}. We begin by computing the classical connected solution to the transmission problem
\begin{equation}\label{eq:wavetransmission}
\begin{array}{l}
   \p_t^2 u(t,x) - c(x)^2\p_x^2 u(t,x) = 0, \quad  t > 0,\ x\in \R,\ x\neq 0, \vspace{4pt} \\
   u(0,x) = u_0(x), \quad \p_t u(0,x) = u_1(x), \quad x\in \R
\end{array}
\end{equation}
where
\[
   c(x) = \left\{
          \begin{array}{ll}
          c_-, & x < 0,\\
          c_+, & x > 0
          \end{array}\right.
\]
and both $c_-$ and $c_+$ are strictly positive. As noted above, the classical transmission conditions at $x=0$ are
\begin{itemize}
\item[-] $u$ is continuous across $x=0$ and
\item[-] $\p_xu$ is continuous across $x=0$.
\end{itemize}
The first condition entails that also $\p_t u$ is continuous across $x=0$.
We introduce
\[
   v = (\p_t - c\p_x)u,\qquad w = (\p_t + c\p_x)u,
\]
\[
   v_0(x) = u_1(x) - c(x)u_0'(x),\qquad w_0(x) = u_1(x) + c(x)u_0'(x)
\]
and rewrite equation (\ref{eq:wavetransmission}) on either side of the line $x=0$ as the system
\begin{equation}\label{eq:transmissionsystem}
   \begin{array}{l}
   (\p_t + c\p_x)v = 0 \\[4pt]
   (\p_t - c\p_x)w = 0 \\[4pt]
   v(0,x) = v_0(x),\ w(0,x) = w_0(x)
   \end{array}
\end{equation}
where $c= c_+$ for $x>0$ and $c = c_-$ for $x<0$.
Note that we can recover
\[
  \p_tu = \frac12(v+w),\qquad \p_xu = \frac{1}{2c}(w-v),
\]
\[
    u(t,x) = u_0(x) + \int_0^t\p_tu(s,x)\,ds.
\]
We shall use the suffix $\pm$ to indicate parts of the solution on $x<0$ and on $x>0$. Consider the regions displayed in Figure\;\ref{fig:RegionsWaveEq}, which are separated by the lines $x = - c_-t$, $x = 0$, $x = c_+t$ (plotted with $c_- = 1$, $c_+ =2$).
\begin{figure}[htb]
\begin{center}
\includegraphics[width = 6cm]{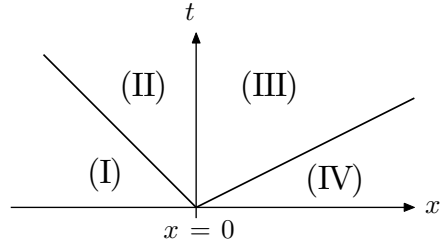}
\end{center}
\caption{Regions of the classical transmission problem.}
\label{fig:RegionsWaveEq}
\end{figure}

Clearly,
\begin{itemize}
\item[] $v_-(t,x) = v_0(x-c_-t)$ in regions (I) and (II),
\item[] $v_+(t,x) = v_0(x-c_+t)$ in region (IV),
\item[] $w_+(t,x) = w_0(x+c_+t)$ in regions (III) and (IV),
\item[] $w_-(t,x) = w_0(x+c_-t)$ in region (I).
\end{itemize}
It remains to determine $v_+$ in region (III) and $w_-$ in region (II). To this end we recall that $\frac12(v+w)$ and $\frac{1}{2c}(w-v)$ should not jump across $x=0$. This yields
\begin{eqnarray*}
   v_+ + w_+ &=& v_- + w_-\\
   \tfrac1{c_+}(w_+ - v_+) & = & \tfrac1{c_-}(w_- - v_-)
\end{eqnarray*}
along $x=0$. Solving this linear system of equations results in
\[
    v_+ = \frac{2c_+}{c_+ + c_-}v_- + \frac{c_- - c_+}{c_+ + c_-}w_+,\qquad w_- = \frac{2c_-}{c_+ + c_-}w_+ + \frac{c_+ - c_-}{c_+ + c_-}v_-.
\]
Together with the known formulas $v_-(t,x) = v_0(x-c_-t)$ in region (II), $w_+(t,x) = w_0(x+c_+t)$ in region (III), we get
\[
    v_+(t,0) = \frac{2c_+}{c_+ + c_-}v_0(-c_-t) + \frac{c_- - c_+}{c_+ + c_-}w_0(c_+t),
\]\[
     w_-(t,0) = \frac{2c_-}{c_+ + c_-}w_0(c_+t) + \frac{c_+ - c_-}{c_+ + c_-}v_0(-c_-t).
\]
Propagating the boundary information along characteristic lines, we obtain
\[
    v(t,x) = v_+\big(t-\frac{x}{c_+},0\big)\quad\mbox{for\ } (t,x) \mbox{\ in\ region\ (III)},
\]
\[
    w(t,x) = w_-\big(t+\frac{x}{c_-},0\big)\quad\mbox{for\ } (t,x) \mbox{\ in\ region\ (II)}.
\]
Collecting terms, we arrive at the following solution.
\medskip\\
In region (I):
\[
    v(t,x) = v_0(x-c_-t),\quad w(t,x) = w_0(x + c_-t);
\]
in region (II):
\begin{eqnarray*}
    v(t,x) &=& v_0(x-c_-t),\\ w(t,x) & = & \frac{2c_-}{c_+ + c_-}w_0\big(\frac{c_+}{c_-}(x+c_-t)\big) +
         \frac{c_+ - c_-}{c_+ + c_-}v_0(-x-c_-t);
\end{eqnarray*}
in region (III):
\begin{eqnarray*}
   v(t,x) &=& \frac{2c_+}{c_+ + c_-}v_0\big(\frac{c_-}{c_+}(x-c_+t)\big) + \frac{c_- - c_+}{c_+ + c_-}w_0(c_+t - x),\\
    w(t,x) &=& w_0(x + c_+t);
\end{eqnarray*}
in region (IV):
\[
    v(t,x) = v_0(x-c_+t),\quad w(t,x) = w_0(x + c_+t).
\]
Finally, the solution $u$ to (\ref{eq:wavetransmission}) is obtained by integrating $\frac12(v+w)$ with respect to $t$.

As a first consequence of the solution formula, we see that $w$ is not necessarily continuous across the line $x+c_-t = 0$, separating regions (I) and (II), and $v$ is not necessarily continuous across the line $x-c_+t = 0$, separating regions (III) and (IV). This shows the singularities emanating from the origin due to lack of compatibility conditions of the initial data with the jump data.

As a second consequence, we can demonstrate the existence of a reflected singularity by taking data
\[
   v_0(x) = w_0(x) = \delta(x+1),
\]
corresponding to delta function initial data $u_1(x) = \delta(x+1)$, $u_0(x) = 0$. In this case, the transmission condition has to be understood in the sense of limits as $x\to\pm 0$ in $\cD'(0,\infty)$ (functions of $x$ with values in the space of distributions with respect to $t$). Similarly, the initial data are taken in the sense of $\cC\big([0,\infty):\cD'(\R)\big)$. We obtain the solution
\[
   v(t,x) = \delta(x-c_-t + 1),\quad w(t,x) = \delta(x + c_-t + 1) + \frac{c_+ - c_-}{c_+ + c_-}\delta(-x-c_-t+1)
\]
for $x<0$, containing the reflected singularity, and
\[
   v(t,x) = \frac{2c_+}{c_+ + c_-}\delta\big(\frac{c_-}{c_+}(x-c_+t)+1\big),\quad w(t,x) = 0
\]
for $x>0$, containing the transmitted singularity. The geometry is depicted in Figure\;\ref{fig:DeltaReflection}
(with $c_- = 1$, $c_+ =2$).

\begin{figure}[htbp]
\begin{center}
\includegraphics[width = 6cm]{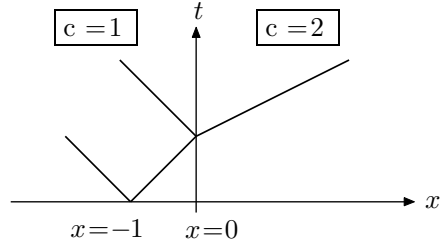}
\end{center}
\caption{Propagation of delta function at $x=-1$.}
\label{fig:DeltaReflection}
\end{figure}

By inspection, one verifies that $v+w$ and $(w-v)/c$ are continuous maps of $x$ with values in $\cD'(0,\infty)$, as required.
Further, we find that
\begin{eqnarray*}
   u(t,x) & = & \frac12\int_0^t(v(s,x) + w(s,x))\,ds\\
      & = &\frac1{2c_-}\Big(H(-x + c_-t - 1)H(x + c_-t+1) + \frac{c_+ - c_-}{c_+ + c_-}H(x+c_-t-1)\Big)
\end{eqnarray*}
for $x<0$. On the other hand,
\[
   \delta\big(\frac{c_-}{c_+}(x-c_+t)+1\big) = \frac{c_+}{c_-}\delta\big(x - c_+t + \frac{c_+}{c_-}\big)
\]
so that
\[
   u(t,x) = \frac1{2c_-}\cdot\frac{2c_+}{c_+ + c_-}H\big(-x + c_+t - \frac{c_+}{c_-}\big)
\]
for $x>0$. In the sector above $t=1$, $u(t,x)$ has the common value $\frac{c_+}{c_-(c_+ + c_-)}$ on either side of $x=0$.

It remains to be shown that the Colombeau solution $U\in\cG(\hpl)$ with initial data $U_0 = 0$, $U_1 = \iota(\delta(\cdot +1))$ is associated with the distributional solution $u(t,x)$ computed above. In fact, the problem can be reduced to (\ref{eq:wave1D}) with $a(x) = 1/c(x)^2$, $b(x) = 1$ and initial data $z_0(x) = 0$, $z_1(x) = (x+1)H(x+1)a_-$ with $a_- = 1/c_-^2$, more precisely, its Colombeau version
\begin{equation*}
\begin{array}{lr}
   A\p_t^2 Z - \p_x^2 Z = 0 & {\rm in\ }  \cG(\hpl),\vspace{4pt} \\
   Z|_{t = 0} = 0,\quad \partial_t Z|_{t = 0} = \iota(z_1) & \mbox{in}\ \cG(\mathbb{R})
\end{array}
\end{equation*}
with $A = \iota(a)$. Indeed, differentiating this equation twice with respect to $t$ shows that $U = \p_t^2 Z$ satisfies the same equation, but with initial data
\[
   U|_{t = 0} = \p_t^2 Z|_{t = 0} = 0,
\]\[
   \p_tU|_{t = 0} = \p_t\big(\p_t^2 Z|_{t = 0}\big) = \frac1{A}\p_x^2\iota(z_1) = \iota(\delta(\cdot+1)).
\]
Here we have used the fact that $\supp\iota(\delta(\cdot+1)) = \{-1\}$ and that $A$ is constant (equal to $a_-$) there.
Since $z_1(x) = (x+1) H(x+1)a_-$ is continuous, the quoted result from \cite{GO:2011b} can be applied to the classical connected solution, showing that it is the associated distribution corresponding to $Z$. Hence $U = \p_t^2 Z$ is associated with the second time derivative of the classical connected solution, which equals the function $u(t,x)$ just computed, as can be verified by direct calculation.

\section*{Acknowledgment}
Part of this work was done while the first author visited Universit\"{a}t Innsbruck, July 30--October 5, 2013. He expresses his heartfelt thanks to the Unit of Engineering Mathematics for the warm hospitality.

%\bibliographystyle{amsplain}

%\begin{thebibliography}{1}
%\bibitem{GoMiSa} {\sc M. Goossens, F. Mittelbach, and A. Samarin},
%{\em The} \LaTeX\ {\em Companion}, Addison-Wesley, Reading, MA, 1994.
%
%\bibitem{Higham} {\sc N.~J. Higham}, {\em Handbook of Writing for
%the Mathematical Sciences}, Society for Industrial and Applied
%Mathematics, Philadelphia, PA, 1993.
%
%\bibitem{Lamport} {\sc L. Lamport}, \LaTeX: {\em A Document
%Preparation System}, Addison-Wesley, Reading, MA, 1986.
%
%\bibitem{SerLev} {\sc R. Seroul and S. Levy}, {\em A
%Beginner's Book of} \TeX, Springer-Verlag, Berlin, New
%York, 1991.
%\end{thebibliography}

\end{document}